\definecolor{mycolor}{rgb}{0.0, 0.75, 1.0}
\newcommand{\labitem}[2]{%
\def\@itemlabel{\textbf{#1}}
\item
\def\@currentlabel{#1}\label{#2}}
\title[Sub-Gaussian heat kernel estimates via Analysis on Metric Spaces]{An Approach to Sub-Gaussian Heat Kernel Estimates via Analysis on Metric Spaces}
\author{Riku Anttila}
\address[Riku Anttila]{Department of Mathematics and Statistics, University of Jyväskylä, P.O. Box 35, FI-40014 Jyväskylä, Finland}
\email{riku.t.anttila@jyu.fi}
\subjclass[2020]{31C25, 30L99, 35K08, 31E05, 46E36}
\keywords{Dirichlet form, sub-Gaussian heat kernel estimates, Poincar\'e inequality, cutoff energy condition, analysis on metric spaces} 
\date{\today}
\newtheorem{theorem}[equation]{Theorem}
\newtheorem{lemma}[equation]{Lemma}
\newtheorem{proposition}[equation]{Proposition}
\newtheorem{corollary}[equation]{Corollary}
\newtheorem{conjecture}[equation]{Conjecture}
\newtheorem*{theorem*}{Theorem}
\newtheorem*{question*}{Question}
\newtheorem*{mainQ*}{Main question}
\numberwithin{equation}{section}
\theoremstyle{definition}
\newtheorem{definition}[equation]{Definition}
\newtheorem{example}[equation]{Example}
\theoremstyle{remark}
\newtheorem{remark}[equation]{Remark}
    \newcommand*{\N}{\mathbb{N}}
    \newcommand*{\Z}{\mathbb{Z}}
    \newcommand*{\R}{\mathbb{R}}
        \DeclarePairedDelimiter\Span{\langle}{\rangle}
        \DeclareMathOperator{\diam}{diam}
        \DeclareMathOperator{\supp}{supp}
        \DeclareMathOperator{\Lip}{Lip}
        \DeclarePairedDelimiter\abs{\lvert}{\rvert}
        \DeclarePairedDelimiter\norm{\lVert}{\rVert}
        \DeclareMathOperator*{\esssup}{ess\,sup}
        \def\vint_#1{\mathchoice%
          {\mathop{\kern 0.2em\vrule width 0.6em height 0.69678ex depth -0.58065ex
                  \kern -0.8em \intop}\nolimits_{\kern -0.4em#1}}%
          {\mathop{\kern 0.1em\vrule width 0.5em height 0.69678ex depth -0.60387ex
                  \kern -0.6em \intop}\nolimits_{#1}}%
          {\mathop{\kern 0.1em\vrule width 0.5em height 0.69678ex
              depth -0.60387ex
                  \kern -0.6em \intop}\nolimits_{#1}}%
          {\mathop{\kern 0.1em\vrule width 0.5em height 0.69678ex depth -0.60387ex
                  \kern -0.6em \intop}\nolimits_{#1}}}
\def\vintslides_#1{\mathchoice%
          {\mathop{\kern 0.1em\vrule width 0.5em height 0.697ex depth -0.581ex
                  \kern -0.6em \intop}\nolimits_{\kern -0.4em#1}}%
          {\mathop{\kern 0.1em\vrule width 0.3em height 0.697ex depth -0.604ex
                  \kern -0.4em \intop}\nolimits_{#1}}%
          {\mathop{\kern 0.1em\vrule width 0.3em height 0.697ex depth -0.604ex
                  \kern -0.4em \intop}\nolimits_{#1}}%
          {\mathop{\kern 0.1em\vrule width 0.3em height 0.697ex depth -0.604ex
                  \kern -0.4em \intop}\nolimits_{#1}}}
\newcommand{\kint}{\vint}
\newcommand{\aveint}[2]{\mathchoice%
          {\mathop{\kern 0.2em\vrule width 0.6em height 0.69678ex depth -0.58065ex
                  \kern -0.8em \intop}\nolimits_{\kern -0.45em#1}^{#2}}%
          {\mathop{\kern 0.1em\vrule width 0.5em height 0.69678ex depth -0.60387ex
                  \kern -0.6em \intop}\nolimits_{#1}^{#2}}%
          {\mathop{\kern 0.1em\vrule width 0.5em height 0.69678ex depth -0.60387ex
                  \kern -0.6em \intop}\nolimits_{#1}^{#2}}%
          {\mathop{\kern 0.1em\vrule width 0.5em height 0.69678ex depth -0.60387ex
                  \kern -0.6em \intop}\nolimits_{#1}^{#2}}}
\let\c@equation\c@figure
\begin{document}

\begin{abstract}
    In this work, we establish a new characterization of sub-Gaussian heat kernel estimates for strongly local regular Dirichlet forms on metric measure spaces.
    Our formulation is based on the newly introduced \emph{cutoff energy condition}, which offers a simpler and more transparent alternative for earlier technical energy inequalities, in particular the cutoff Sobolev inequality.
    The main idea of our approach is to reinterpret the cutoff Sobolev inequality as a Poincar\'e type inequality, and analyze it using Haj{\l}asz--Koskela techniques from analysis on metric spaces. Applications of the new characterization are also discussed.
\end{abstract}

\maketitle
\section{Introduction}
\subsection{Background}

Let $(X,d,\mu)$ be a metric measure space where $(X,d)$ is a complete, locally compact, and separable metric space. We assume that $\mu$ is a Radon measure on $(X,d)$ with full topological support.
To simplify some statements on the known results, we assume that $(X,d)$ is geodesic.

The objective of this work is to deepen our understanding of the following question: When does a given strongly local Dirichlet form $(\mathcal{E},\mathcal{F})$ on $L^2(X,\mu)$ satisfy the \emph{sub-Gaussian heat kernel estimates},
\begin{align}\label{eq:HK}
    \tag{$\textup{HKE}(\beta)$}
    \frac{c}{\mu(B(x,t^{1/\beta}))}& \exp\left( -\left(\frac{d(x,y)^\beta}{ct}\right)^{\frac{1}{\beta - 1}} \right)\\
    & \leq p_{t}(x,y)
    \leq \frac{C}{\mu(B(x,t^{1/\beta}))} \exp\left( -\left(\frac{d(x,y)^\beta}{Ct}\right)^{\frac{1}{\beta - 1}} \right),\nonumber
\end{align}
for all $x,y \in X$ and $t > 0$?
Here, $\beta \geq 2$ is an important value called the \emph{walk dimension}, $c,C > 0$ are some constants and $\{p_t\}_{t > 0}$ is a heat kernel of $(\mathcal{E},\mathcal{F})$; the terminology is introduced in Section \ref{sec:Dir}. When $\beta = 2$, these estimates are called the \emph{Gaussian heat kernel estimates}, which are quite well-understood; we refer to \cite{grigoryan2009heat,saloff2002aspects} and references therein. In this work, we are interested in the strict sub-Gaussian case, $\beta > 2$, which is more involved; see \cite[Introduction]{barlow2004stability} and \cite{kajino2020singularity} for related discussions.

The aforementioned question has been studied extensively both in discrete graph settings;
see the book of Barlow \cite{barlow2017random}, and also on general metric measure spaces  \cite{barlow2006stability, barlow2020stability,barlow2012equivalence,barlow2018stability,grigor2014heat,GrigorCapacity15,grigor2012two,kajino2020singularity,kajino2023conformal}. A result of Barlow, Bass and Kumagai \cite[Theorem 2.16]{barlow2006stability} shows that \ref{eq:HK} is equivalent to the conjunction of the following three conditions.
\begin{enumerate}[label=\textup{(\arabic*)}, leftmargin=*]
    \vspace{3pt}
    \item The measure $\mu$ is \emph{doubling}: There is $D \geq 1$ such that
    \[
        0 < \mu(B(x,2r)) \leq D \mu(B(x,r)) \text{ for all } x \in X \text{ and } r > 0.
    \]
    \item The \emph{Poincar\'e inequality} \ref{intro:PI}: There are $C,\sigma \geq 1$ such that for all
    $x \in X$, $r > 0$ and $f \in \mathcal{F}$,
\begin{equation}\label{intro:PI}
    \tag{$\textup{PI}(\beta)$}
    \int_{B(x,r)} (f - f_{B(x,r)})^2\, d\mu \leq C r^\beta \int_{B(x,\sigma r)} \, d\Gamma\langle f \rangle.
\end{equation}
Here $f_{B(x,r)} := \mu(B(x,r))^{-1} \int_{B(x,r)} f d\mu$ is the integral average and $\Gamma\langle f \rangle$ denotes the energy measure of $f \in \mathcal{F}$; see Section \ref{sec:Dir}.
    \vspace{3pt}
    \item The \emph{cutoff Sobolev inequality} \hyperref[eq:CS(Dir)]{$\textup{CS}_\delta(\beta)$} for some $\delta > 0$; see Definition \ref{def:CS(Dir)}.
\end{enumerate}
This equivalence was first discovered on graphs by Barlow and Bass \cite[Theorem 1.5]{barlow2004stability}, and was later refined by Grigor’yan, Hu, and Lau \cite[Theorem 1.2]{GrigorCapacity15} in the general metric measure space setting. See also \cite[Theorem 4.5]{kajino2023conformal}.

From the practical point of view, however, the characterizations of Barlow--Bass--Kumagai, as well as that of Grigor’yan--Hu--Lau, are rather difficult to use because the literature essentially provides no general sufficient conditions for the cutoff Sobolev inequality except the sub-Gaussian heat kernel estimates.
Some quite mild necessary and sufficient conditions have been established when the ambient space has ``low dimension'' in a certain sense \cite{murugan2023note,barlow2005characterization}. These methods, nevertheless, do not work in the higher dimensional settings.

The technical difficulties surrounding the cutoff Sobolev inequality have been raised in a several works. For instance by Barlow \cite[Remark 3.17]{barlow2013analysis}, by Kumagai \cite[Open problem III]{Kumagai2014}, and by Murugan \cite[Section 6.3]{murugan2024heat}. Moreover, in their \emph{resistance conjecture} (2014), Grigor’yan, Hu, and Lau predicted that, in the aforementioned characterization of sub-Gaussian heat kernel estimates, the cutoff Sobolev inequality can be replaced by a drastically simpler capacity upper bound \cite[Conjecture 4.15]{grigor2014heat}. See \cite[Remark 1.2]{murugan2023note} for the known results to this direction, and Subsection \ref{subsec:concludingremarks} for the precise statement and some further discussion.

\subsection{Main result}
In this work, we give a new characterization of sub-Gaussian heat kernel estimates, in which the cutoff Sobolev inequality is replaced by simpler and more tractable energy upper bounds of cutoff functions. Our condition can be viewed as a variant of the more classical regularity estimate for $\xi \in W^{1,2}(\R^n)$,
\[
    \int_{B(y,r)} \abs{\nabla \xi}^2\, dx \leq C r^{n-2 + \delta}
\]
for all $y \in \R^n$ and $r > 0$, which implies Hölder continuity \cite[Theorem 7.19]{GilbargTrudinger}.
In the Dirichlet form setting, we introduce the following \emph{cutoff energy condition}.

\begin{definition}
    Let $\beta > 2$ and $\delta > 0$. We say that the strongly local regular Dirichlet form $(\mathcal{E},\mathcal{F})$ on $L^2(X,\mu)$ satisfies the \emph{cutoff energy condition} \ref{intro:EC} if there is a constant $C \geq 1$ for which the following holds. For all $x \in X$ and $R > 0$ there is $\xi \in \mathcal{F} \cap C(X)$ satisfying $\xi|_{B(x,R)}=1$, $\xi|_{X \setminus B(x,2R)}=0$, $0 \leq \xi \leq 1$ and
    \begin{equation}\label{intro:EC}
        \tag{$\textup{CE}_{\delta}(\beta)$}
        \int_{B(y,r)} \,d\Gamma\Span{\xi} \leq C\left( \frac{r}{R} \right)^\delta \frac{\mu(B(y,r))}{r^\beta}
    \end{equation}
    for all $y \in X$ and $0 < r \leq 3R$. Here $\Gamma\Span{\xi}$ is the energy measure of $\xi$; see Section \ref{sec:Dir}.
\end{definition}

Unlike the cutoff Sobolev inequality, the cutoff energy condition only involves the cutoff function $\xi$ and its quantitative energy upper bounds. Therefore, it is more intuitive and can be verified directly.
The value $\delta > 0$, as it is shown in Section \ref{sec:Dir}, is related to the Hölder regularity exponent provided by the (a posteriori) parabolic Harnack inequality.

The main result of the work is the following theorem.

\begin{theorem}[Theorem \ref{thm:main}]\label{thm:SUPER}
    Let $(X,d,\mu)$ be a metric measure space where $(X,d)$ is complete, geodesic, locally compact and separable, and $\mu$ is a Radon measure on $(X,d)$ with full topological support.
    Then, a strongly local regular Dirichlet form $(\mathcal{E},\mathcal{F})$ on $L^2(X,\mu)$ satisfies the sub-Gaussian heat kernel estimates \ref{eq:HK} for $\beta > 2$ if and only if $\mu$ is a doubling measure and $(\mathcal{E},\mathcal{F})$ satisfies both the Poincar\'e inequality \ref{intro:PI} and the cutoff energy condition \ref{intro:EC} for some $\delta > 0$.
\end{theorem}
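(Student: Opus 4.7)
The plan is to combine the Grigor'yan--Hu--Lau characterization with a Haj{\l}asz--Koskela-type chaining argument. By \cite[Theorem 1.2]{GrigorCapacity15}, under the standing assumptions the estimates \ref{eq:HK} are equivalent to the conjunction of volume doubling, \ref{intro:PI}, and the cutoff Sobolev inequality $\textup{CS}_\delta(\beta)$ for some $\delta > 0$. Thus it suffices to show that, in the presence of volume doubling and \ref{intro:PI}, the cutoff energy condition \ref{intro:EC} is equivalent to $\textup{CS}_{\delta'}(\beta)$ for some $\delta' > 0$.

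For the implication $\textup{CS}_{\delta'}(\beta) \Rightarrow$ \ref{intro:EC}, I would start from the cutoff $\xi$ provided by $\textup{CS}_{\delta'}(\beta)$ and test its energy inequality against carefully localized, heat-flow-constructed bump functions concentrated on $B(y,r)$, in order to extract a pointwise upper bound on $\Gamma\langle\xi\rangle(B(y,r))$. The decay factor $(r/R)^{\delta'}$ should appear from iterating the absorption structure of the cutoff Sobolev inequality down to scale $r$; volume doubling then converts the resulting capacity-type bound into the volume-normalized form demanded by \ref{intro:EC}.

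The main thrust, and the genuinely new contribution, is the reverse implication \ref{intro:EC} $\Rightarrow \textup{CS}_{\delta'}(\beta)$. Given a ball $B(x,R)$, I would take $\xi$ supplied by \ref{intro:EC} and, for an arbitrary $f \in \mathcal{F}$, control $\int f^2 \, d\Gamma\langle\xi\rangle$ by a small multiple of $\int \xi^2 \, d\Gamma\langle f\rangle$ plus $C R^{-\beta}\int f^2 \, d\mu$. The first step is to cover $\supp(\Gamma\langle\xi\rangle) \subset B(x,2R) \setminus B(x,R)$ by a Whitney-type family $\{B_i = B(y_i,r_i)\}$ with $r_i$ comparable to $\dist(y_i, \{\xi \in \{0,1\}\})$. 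On each $B_i$, the decomposition $f^2 \le 2(f - f_{B_i})^2 + 2 f_{B_i}^2$ combined with \ref{intro:EC} gives $\Gamma\langle\xi\rangle(B_i) \le C(r_i/R)^\delta \mu(B_i)/r_i^\beta$. The next step is to invoke \ref{intro:PI} together with a Haj{\l}asz--Koskela dyadic-chain telescoping from $B(x,2R)$ down to $B_i$, yielding a pointwise bound for $(f - f_{B_i})^2$ by a maximal function of the gradient and controlling $f_{B_i}^2$ by the bulk $L^2$ norm. Summing over $i$, the Dini-summable weight $(r_i/R)^\delta$ and the geodesic hypothesis (needed for the chain condition underlying Haj{\l}asz--Koskela) together produce the sought-for inequality.

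The principal obstacle will be achieving a sufficiently small constant in front of $\int \xi^2 \, d\Gamma\langle f\rangle$, which $\textup{CS}_{\delta'}(\beta)$ requires for absorption; a naive covering argument yields only a bounded constant, and since the scale $R$ is fixed no simple rescaling can shrink it. I expect to resolve this by a self-improvement trick: rather than use a single cutoff, I would construct $\xi$ as an average of $n$ separate \ref{intro:EC}-cutoffs chosen on a nested sequence of thin annuli between $B(x,R)$ and $B(x,2R)$. By a telescoping Cauchy--Schwarz, each layer contributes only $O(1/n^2)$ to the gradient term, so letting $n$ be large supplies the required small constant at the cost of only a polynomial blow-up in the $L^2$ tail. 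A secondary technical point is that the Whitney scales $r_i$ degenerate near $\partial\{\xi = 1\}$ and $\partial\{\xi = 0\}$; this is precisely where the quantitative factor $(r_i/R)^\delta$ from \ref{intro:EC} plays its decisive role in securing summability across scales.
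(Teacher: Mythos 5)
Your top-level plan (reduce to an equivalence of the cutoff Sobolev inequality $\textup{CS}_{\delta'}(\beta)$ and the cutoff energy condition \ref{intro:EC}, under doubling and \ref{intro:PI}, then invoke Grigor'yan--Hu--Lau / Barlow--Bass--Kumagai) matches the paper's route. However, the two implications are described in a way that would not carry through, and the actual mechanism of the paper's proof is missed.

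\textbf{On $\textup{CS}_{\delta'}(\beta)\Rightarrow$ \ref{intro:EC}.} No heat-flow bump functions are needed; one simply tests the cutoff Sobolev inequality against a function $h\in\mathcal{F}\cap C(X)$ with $h\equiv 1$ on $B(y,2r)$. Strong locality gives $\Gamma\langle h\rangle(B(y,2r))=0$, so the gradient term vanishes and \ref{intro:EC} drops out immediately. Your proposed argument is not wrong per se but grafts unnecessary machinery onto a one-line observation.

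\textbf{On \ref{intro:EC}$\Rightarrow\textup{CS}_{\delta'}(\beta)$, the principal obstacle you identify does not exist.} You worry about forcing a small constant in front of $\int \xi^2\,d\Gamma\langle f\rangle$ for ``absorption.'' That concern pertains to the \emph{simplified} cutoff Sobolev inequality of Andres--Barlow. The cutoff Sobolev inequality used here and in Grigor'yan--Hu--Lau is
\[
\int_{B(y,r)} f_{qc}^2\, d\Gamma\langle\xi\rangle \le C\left(\frac{r}{R}\right)^{\delta}\left(\int_{B(y,2r)} d\Gamma\langle f\rangle + \frac{1}{r^\beta}\int_{B(y,2r)} f^2\,d\mu\right),
\]
which already carries the $(r/R)^\delta$ decay and requires no absorption whatsoever. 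Consequently your self-improvement device (averaging $n$ cutoffs over nested annuli, extracting an $O(1/n^2)$ gain by Cauchy--Schwarz) is both unnecessary and problematic: the energy of an average of cutoffs does not cleanly decompose into a sum of small contributions, and you would lose the radial annular localization that \ref{intro:EC} needs.

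\textbf{The missing key idea.} The paper reinterprets the left side of the cutoff Sobolev inequality as a Poincar\'e inequality for the measure $\nu := \Gamma\langle\xi\rangle$. The cutoff energy condition is precisely the ball-growth bound \ref{T1} of Theorem~\ref{thm:SobolevPoincare} with $\Theta(y,r)\approx(r/R)^{\delta/2}$ and $p=q=2$, so Proposition~\ref{prop:LocalUD} directly yields
\[
\int_{B(y,r)} (f - f_{B(y,r)})^2\, d\Gamma\langle\xi\rangle \lesssim \left(\frac{r}{R}\right)^{\delta}\int_{B(y,\sigma r)} d\Gamma\langle f\rangle,
\]
and writing $f^2 \lesssim (f - f_{B(y,r)})^2 + f_{B(y,r)}^2$, controlling $f_{B(y,r)}^2$ by Jensen, and using \ref{intro:EC} once more for $\Gamma\langle\xi\rangle(B(y,r))$ gives the cutoff Sobolev inequality. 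No Whitney decomposition along level sets of $\xi$ is involved; the only covering used is the dyadic one in the Haj{\l}asz--Koskela telescoping. A genuine technical point your proposal omits is that Proposition~\ref{prop:LocalUD} is formulated for precise representatives while the cutoff Sobolev inequality is stated for quasicontinuous ones, which is resolved by first proving the bound for $f\in\mathcal{F}\cap C(X)$ and then passing to general $f$ via $\mathcal{E}_1$-approximation, quasi-everywhere convergence, and Fatou. Finally, geodesicity is not used for the chaining; it is invoked only in order to pass from near-diagonal lower bounds to the full off-diagonal lower bounds when applying the Barlow--Bass--Kumagai characterization.
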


Theorem \ref{thm:SUPER} provides a more intuitive characterization of sub-Gaussian heat kernel estimates than the earlier formulations in the literature that rely on the cutoff Sobolev inequality.
We propose our result as a positive answer to the questions posed in \cite[Remark 3.17]{barlow2013analysis} and \cite[Open Problem III]{Kumagai2014}, both of which ask whether simpler characterizations exist.
Moreover, Theorem \ref{thm:SUPER} also provides a potential approach to study the resistance conjecture.

\subsection{Applications to reflected diffusion}
Beyond providing additional intuition, the cutoff energy condition seems to be more convenient to work with than the cutoff Sobolev inequality. For instance, some technical difficulties concerning the latter were noted by Murugan \cite[Section 6.3]{murugan2024heat} in his study on reflected diffusion.
He showed that the sub-Gaussian heat kernel estimates are inherited from the diffusion process on the ambient space to the reflected diffusion on a uniform domain. Nevertheless, his work left open the question of whether the same result holds for the more general class of inner uniform domains because the proof of the cutoff Sobolev inequality necessarily requires a different approach. We note that, for the Gaussian estimates, this is already known to be true by the work of Gyrya and Saloff-Coste \cite[Theorem 3.10]{SaloffCoste2011NeumannAD}.

To this end, the recent work of the author shows that the cutoff energy condition together with Theorem \ref{thm:SUPER} can be applied to prove that the sub-Gaussian heat kernel estimates are inherited by reflected diffusion on inner uniform domains \cite{AnttilaReflectedDiff}.

\subsection{New examples}
The main inspiration for the cutoff energy condition came from the recent work of the author, Eriksson--Bique and Shimizu \cite{AEBSLaaksoSobolev2025}, which introduced a natural construction of Sobolev space and $p$-energies on Laakso-type fractal spaces; see also the earlier work \cite{anttila2024constructions}. 
Moreover, when $p = 2$, the framework produces strongly local regular Dirichlet forms.
We were, however, unable to verify the sub-Gaussian heat kernel estimates since we did not know how to establish the cutoff Sobolev inequality at the time.
Nevertheless, we were able to obtain the cutoff energy condition, and the first prototype version of Theorem \ref{thm:SUPER} was discovered within the general setting of \cite{AEBSLaaksoSobolev2025}.
In particular, the sub-Gaussian heat kernel estimates are now verified for all examples arising within that framework.

\subsection{Outline of the methods}
The original cutoff Sobolev inequality introduced in \cite{barlow2006stability,barlow2004stability} has been replaced in later works by a priori weaker energy condition called the \emph{simplififed cutoff Sobolev inequality} \cite{andres2015energy}.
However, the results of this work indicate, quite surprisingly, that the original cutoff Sobolev inequality is in fact more transparent and easier to interpret than its simplified counterpart.
We note that, under the doubling property of $\mu$ and the Poincar\'e inequality \ref{intro:PI}, the two variants are equivalent because the simplified version is sufficiently strong to verify \ref{eq:HK} by \cite{GrigorCapacity15}, and the stronger one is implied by \ref{eq:HK} according to \cite{barlow2006stability}.

Our approach to establish Theorem \ref{thm:SUPER} consists of two main ideas. First, we go back to the original cutoff Sobolev inequality. Second, we regard it as a Poincar\'e type inequality and apply Haj{\l}asz--Koskela techniques \cite{SobolevMetPoincare,SobolevMeetsPoincare} and some other common methods from \emph{analysis on metric spaces} literature; see the standard references of analysis on metric spaces \cite{bjorn2011nonlinear,Heinonen,HKST}.
The simplified cutoff Sobolev inequality does not seem to suitable for the second step.

These methods are used in a broader sense than we have indicated so far and several of the results are formulated to apply to a broad class of energies on metric spaces; see for instance Theorem \ref{thm:SobolevPoincare} and Section \ref{Sec:Applications}.
We introduce a general framework of Sobolev spaces and energies, called the \emph{$p$-energy structure}, which is inspired primarily by the notion of \emph{Poincaré inequality pairs} from analysis on metric spaces \cite{SobolevMetPoincare} and by \emph{energy measures} from the Dirichlet form literature \cite{FOT}.
See also \cite{Perez98,KajinoContraction,Perez2018DegeneratePI,Sasaya2025} and therein references for related studies based on similar general frameworks.
We recover some classical results, such as the Sobolev--Poincar\'e inequality, Morrey's inequality and certain results of Chanillo and Wheeden \cite{chanillowheeden85} in weighted Euclidean spaces. We also partially generalize results of Björn and Ka{\l}amajska \cite{BJORNKala} in the classical analysis on metric spaces setting, as our results relax some of their doubling assumptions. See Sections \ref{sec:SPI} and \ref{Sec:Applications} for further details.

\subsection*{Organization of the paper}
In Section \ref{sec:Preli}, we recall some standard terminology and results on metric spaces and measures, and introduce the framework of $p$-energy structures.

Section \ref{sec:SPI} is the most important part of the work, where we study Poincar\'e type inequalities for general Borel measures using the formalism of $p$-energy structures.
In Theorem \ref{thm:SobolevPoincare}, we formulate a condition, that is both necessary and sufficient, for a Borel measure to satisfy a Sobolev--Poincar\'e type inequality. In Remark \ref{rem:dirac}, we show that this result is sharp in certain sense.
Simple applications of Theorem \ref{thm:SobolevPoincare}
are discussed in Section \ref{Sec:Applications}.

In Section \ref{sec:Dir}, we apply the results of Section \ref{sec:SPI} to Dirichlet forms and heat kernel estimates.
We prove the main result of the paper, Theorem \ref{thm:SUPER} and more generally Theorem \ref{thm:main}.
Some other regularity estimates are also studied.

\subsection*{Conventions}
In what follows, given constants $K,L \geq 0$, we frequently write $K \lesssim L$ to indicate the existence of an \emph{inessential parameter} $C \geq 1$ such that $K \leq C \cdot L$. The notations $p$ and $q$ always refer to finite exponents in $[1,\infty)$, and $X$ is always an infinite set.

\section*{Acknowledgments} 
I thank Sylvester Eriksson-Bique, Naotaka Kajino, Mathav Murugan and Ryosuke Shimizu for the discussions on the resistance conjecture and for their helpful comments and suggestions.
This work is supported by the Finnish Ministry of Education and Culture’s Pilot for Doctoral Programmes (Pilot project Mathematics of Sensing, Imaging and Modelling).
The research work was partially conducted during my participation in the Trimster program \emph{Metric Analysis}, organized by Hausdorff Research Institute of Mathematics (HIM) in Bonn, and was financially supported by the Deutsche Forschungsgemeinschaft (DFG, German Research Foundation) under Germany's Excellence Strategy - EXC-2047/1 - 390685813. I express my deepest gratitude towards the institute and local organizers for their hospitality and several of the fellow participants for the engaging discussions related to the present work.
I also thank Pekka Koskela for introducing relevant literature, and Anders Björn and Jana Björn for helpful discussions during their visit to University of Jyväskylä in June 2025. Lastly, I thank Antti Vähäkangas for numerous invaluable discussions and comments on earlier versions of the paper.

\section{Preliminary}\label{sec:Preli}

\subsection{Metric measure spaces}\label{subsec:GeomSetting}
Throughout the paper, unless indicated otherwise, $X = (X,d,\mu)$ always denotes a fixed metric measure space where $d$ is a metric on the set $X$ and $\mu$ is a Radon measure on $(X,d)$.
We shall refer to $\mu$ as the \emph{reference measure}.
We always assume that the metric space $(X,d)$ is separable and locally compact.
We usually refer to the metric measure space $(X,d,\mu)$, or to the metric space $(X,d)$, by the underlying set $X$.

The \emph{open balls} of $X$ are denoted
\[
    B(x,r) := \{ y \in X : d(x,y) < r \} \text{ for } x \in X \text{ and } r > 0.
\]
We denote by $C(X)$ the continuous functions $X \to \R$, and by $C_c(X)$ the subset of compactly supported continuous functions.
We say that a continuous function $\varphi : X \to \R$ is a \emph{cutoff function} for $E \subseteq F$, where $E \subseteq F \subseteq X$ are two non-empty Borel subsets, if $\varphi|_{E} = 1,\ \varphi|_{X \setminus F} = 0$ and $0 \leq \varphi \leq 1$.

For a Borel subset $A \subseteq X$ with $\mu(A) \in(0,\infty)$ and a Borel measurable function $f : X \to \R$, we denote the \emph{integral average}
\[
    f_{A} := \kint_A f \, d\mu := \frac{1}{\mu(A)} \int_A f \,d\mu.
\]
We note that, in some proofs, we simultaneously work with multiple Borel measures. However, since we only take averages with respect to the reference measure $\mu$, we write $f_A$ instead of $f_{A,\mu}$.

We shall impose the following two geometric conditions that are implicitly assumed to hold throughout the paper, unless specified otherwise.

\begin{enumerate}[label=\textup{(\arabic*)}, leftmargin=*]
    \vspace{2pt}
    \item The reference measure $\mu$ is \emph{doubling}, meaning that there is a \emph{doubling constant} $D \geq 1$ of $\mu$ satisfying
\begin{equation}\label{eq:muDoubling}
    0 < \mu(B(x,2r)) \leq D\mu(B(x,r)) < \infty \text{ for all } x \in X \text{ and } r > 0.
\end{equation}
    \vspace{2pt}
    \item The metric space $X$ is \emph{uniformly perfect}, meaning that there is a constant $\lambda \in (0,2]$ such that
\begin{equation}\label{eq:RadDiam}
    \diam(B(x,r)) \geq \lambda r \text{ for all } x \in X \text{ and } r \in (0,\diam(X)).
\end{equation}
Here $\diam(X) := \sup_{x,y \in X} d(x,y)$.
Note that every connected metric space is uniformly perfect.
\end{enumerate}

We recall some properties of doubling measures. See \cite[Chapter 13]{Heinonen} for details.

\begin{lemma}
    There is a constant $N \in \N$ depending only on the doubling constant $D$ of $\mu$ in \eqref{eq:muDoubling} such that the following holds. For all $x \in X$ and $r > 0$ there are points $x_1,\dots,x_N \in X$ satisfying
\begin{equation}\label{eq:MD}
    B(x,2r) \subseteq \bigcup_{i = 1}^N B(x_i,r).
\end{equation}
\end{lemma}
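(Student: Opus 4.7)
The plan is to apply a standard maximal packing argument: I would pick points by a greedy $r$-separation procedure inside $B(x,2r)$, observe that the resulting radius-$r$ balls must cover $B(x,2r)$ by maximality, and then use disjointness of the half-radius balls together with the doubling condition \eqref{eq:muDoubling} to bound the cardinality.

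First, I would let $\{x_1,\ldots,x_N\} \subseteq B(x,2r)$ be a maximal $r$-separated set, i.e., $d(x_i,x_j) \geq r$ for $i \neq j$ with no point of $B(x,2r)$ addable without violating this. By maximality, every $y \in B(x,2r)$ lies within distance $r$ of some $x_i$ (otherwise $y$ could be appended), yielding $B(x,2r) \subseteq \bigcup_{i=1}^{N} B(x_i,r)$. Existence of such a set is routine in the separable, locally compact setting (Zorn's lemma, or by taking a limit of finite $r$-separated subsets, which are automatically bounded in size by the estimate below).

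Next I would bound $N$ independently of $x$ and $r$. The balls $\{B(x_i, r/2)\}_{i=1}^{N}$ are pairwise disjoint because the centers are $r$-separated, and each is contained in $B(x, 5r/2) \subseteq B(x, 3r)$. Conversely, since $x_i \in B(x,2r)$, the triangle inequality gives $B(x,3r) \subseteq B(x_i, 5r) \subseteq B(x_i, 8r)$. Iterating \eqref{eq:muDoubling} four times yields $\mu(B(x_i, 8r)) \leq D^4 \mu(B(x_i, r/2))$, whence $\mu(B(x_i, r/2)) \geq D^{-4}\mu(B(x,3r))$ for every $i$. Summing over the disjoint balls,
\[
    \mu(B(x,3r)) \geq \sum_{i=1}^{N} \mu(B(x_i, r/2)) \geq N D^{-4} \mu(B(x,3r)),
\]
so $N \leq D^4$ and it suffices to take $N := \lfloor D^4 \rfloor$ (adjoining further arbitrary centers if the maximal set has fewer elements).

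There is essentially no deep obstacle here; the only mild subtlety is producing the maximal $r$-separated set rigorously, which I would handle by exhausting finite $r$-separated subsets and invoking the uniform a priori bound coming from the doubling estimate above. Since $\mu(B(x,3r)) < \infty$ by \eqref{eq:muDoubling}, the argument is free of integrability complications.
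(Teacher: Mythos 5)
Your proof is correct and is precisely the standard packing argument; the paper does not prove this lemma itself but defers to Heinonen's book (Chapter~13), which uses the same maximal $r$-separated set plus disjoint-half-ball-volume-comparison argument you give.
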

The condition \eqref{eq:MD} is called the \emph{metric doubling property}. It is a standard result that it implies the following \emph{bounded overlapping property}.

\begin{lemma}\label{lemma:Patching}
    Let $\varepsilon > 0$ and $\sigma \geq 1$. Fix any collection $\{x_i\}_{i \in I} \subseteq X$ such that $d(x_i,x_j) \geq \varepsilon$ for all distinct $i,j \in I$, and let
    \[
        G := \bigcup_{i \in I} B(x_i,\sigma \varepsilon).
    \]
    Then $I$ is at most countably infinite and there is a constant $C \geq 1$ depending only on $N$ in \eqref{eq:MD} and $\sigma$ such that
    \begin{equation*}
        \sum_{i \in I} \mathds{1}_{B(x_i,\sigma 2^{-m})} \leq C \mathds{1}_G.
    \end{equation*}
    Here $\mathds{1}_A$ denotes the characteristic function of a Borel set $A \subseteq X$.
\end{lemma}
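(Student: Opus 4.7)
The plan is to prove the two claims of the lemma separately by standard covering arguments in doubling metric spaces. (I read the right-hand side of the displayed inequality as $\sum_{i \in I} \mathds{1}_{B(x_i,\sigma \varepsilon)} \leq C \mathds{1}_G$, interpreting ``$\sigma 2^{-m}$'' as a typographical artifact, since this is the natural statement suggested by the hypotheses and by the lemma's name.)

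For the countability of $I$, I would observe that the separation hypothesis $d(x_i,x_j) \geq \varepsilon$, together with the triangle inequality, forces the balls $\{B(x_i,\varepsilon/2)\}_{i \in I}$ to be pairwise disjoint. Since $(X,d)$ is separable, any pairwise disjoint family of nonempty open sets is at most countable, so $I$ is at most countably infinite.

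For the overlap bound, I would fix an arbitrary $x \in X$ and estimate the cardinality of the index set $J_x := \{i \in I : x \in B(x_i,\sigma\varepsilon)\}$, noting that $\sum_{i \in I}\mathds{1}_{B(x_i,\sigma\varepsilon)}(x) = \#J_x$ and that this sum vanishes outside $G$. For $i \in J_x$ one has $x_i \in B(x,\sigma \varepsilon)$, so the task reduces to bounding the number of pairwise $\varepsilon$-separated points contained in a ball of radius $\sigma\varepsilon$. I would iterate the metric doubling property \eqref{eq:MD} about $\lceil \log_2(2\sigma)\rceil$ times to cover $B(x,\sigma\varepsilon)$ by a uniformly bounded number (depending only on $N$ and $\sigma$) of balls of radius $\varepsilon/2$. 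By the triangle inequality, each such small ball contains at most one of the $x_i$, which yields the desired cardinality bound on $J_x$; the resulting constant may be taken as $C$.

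The argument presents no genuine obstacle; the only point requiring a bit of care is choosing the iteration count large enough that the covering balls have radius at most $\varepsilon/2$, so that the $\varepsilon$-separation hypothesis genuinely precludes two of the $x_i$ from lying in the same small ball.
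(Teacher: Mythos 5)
The paper itself gives no proof of this lemma; it simply cites it as ``a standard result'' following from the metric doubling property \eqref{eq:MD}. Your argument is a correct and standard fill-in of that result, and it also (correctly) reads the displayed ``$\sigma 2^{-m}$'' as a typo for ``$\sigma\varepsilon$'', which is clearly what is intended given the hypotheses and the way the lemma is invoked in \eqref{eq:UsePatching}. Both halves of your proof are sound: separability of $X$ makes any pairwise disjoint family of nonempty open sets countable, and iterating \eqref{eq:MD} roughly $\lceil\log_2(2\sigma)\rceil$ times covers $B(x,\sigma\varepsilon)$ by $N^{\lceil\log_2(2\sigma)\rceil}$ balls of radius at most $\varepsilon/2$, each of which (having diameter $<\varepsilon$) can contain at most one of the $\varepsilon$-separated centres $x_i$; this bounds the overlap by $C=N^{\lceil\log_2(2\sigma)\rceil}$, and the sum vanishes off $G$ trivially. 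No gap.
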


Lastly, we recall that doubling measures on uniformly perfect metric spaces have the following quantitative dimension bounds.

\begin{lemma}
    Let $\nu$ be any doubling measure on $X$. There are constants $C\geq 1$ and $\alpha_L,\alpha_U \in (0,\infty)$ depending only on the constants in \eqref{eq:muDoubling} and \eqref{eq:RadDiam} such that the following condition holds. Whenever $x,y \in X$ and $0 < r \leq R < \diam(X)$ with $B(y,r) \cap B(x,R) \neq \emptyset$ then
    \begin{equation}\label{eq:RevDoubling}
         C^{-1}\left(\frac{R}{r}\right)^{\alpha_L} \leq \frac{\nu(B(x,R))}{\nu(B(y,r))} \leq C \left(\frac{R}{r}\right)^{\alpha_U}.
    \end{equation}
    
\end{lemma}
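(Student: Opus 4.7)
The upper bound is the easy half and uses only doubling of $\nu$. The hypothesis $B(y,r)\cap B(x,R)\neq\emptyset$ gives $d(x,y)<R+r\leq 2R$, hence $B(x,R)\subseteq B(y,3R)$. Iterating the doubling property of $\nu$ a total of $k=\lceil\log_2(3R/r)\rceil$ times yields
\[
    \nu(B(x,R))\leq \nu(B(y,3R))\leq D_\nu^{k}\,\nu(B(y,r))\leq C\,(R/r)^{\alpha_U}\,\nu(B(y,r)),
\]
where $D_\nu$ is the doubling constant of $\nu$ and $\alpha_U:=\log_2 D_\nu$.

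For the lower bound, the first step is to reduce to balls with a common center. Since $B(y,R)\subseteq B(x,3R)$, doubling for $\nu$ gives $\nu(B(x,R))\geq D_\nu^{-2}\nu(B(y,R))$, so it suffices to prove the single-center reverse doubling
\[
    \nu(B(y,R))\geq C^{-1}\,(R/r)^{\alpha_L}\,\nu(B(y,r))\quad\text{for }0<r\leq R<\diam(X).
\]
I would establish this by the standard gap argument: choose a scale factor $\kappa=\kappa(\lambda,D_\nu)>1$ large enough that uniform perfectness \eqref{eq:RadDiam} applied to $B(y,\kappa r/2)$ produces a point $w$ with $2r<d(y,w)<\kappa r/2$ (this requires $\kappa\gtrsim 1/\lambda$ and $\kappa r/2<\diam(X)$). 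Then $B(y,r)$ and $B(w,r)$ are disjoint and both contained in $B(y,\kappa r)$. By doubling for $\nu$, $B(w,r)$ carries a definite fraction $c_0=c_0(\lambda,D_\nu)>0$ of the mass of $B(y,\kappa r)$, since $B(y,\kappa r)\subseteq B(w,2\kappa r)$ and iterated doubling compares $\nu(B(w,2\kappa r))$ with $\nu(B(w,r))$. Therefore
\[
    \nu(B(y,\kappa r))\geq \nu(B(y,r))+\nu(B(w,r))\geq \nu(B(y,r))+c_0\,\nu(B(y,\kappa r)),
\]
which rearranges to $\nu(B(y,r))\leq\eta\,\nu(B(y,\kappa r))$ for $\eta=1-c_0\in(0,1)$. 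Iterating this inequality $m:=\lfloor\log_\kappa(R/r)\rfloor$ times produces the power-law decay with $\alpha_L:=-\log_\kappa\eta>0$.

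The one genuine nuisance is the boundary regime where $\kappa r$ would exceed $\diam(X)$ during the iteration, blocking one further application of uniform perfectness. In that case the remaining ratio is comparable to $\diam(X)/r$ up to a bounded factor, and a single additional application of doubling absorbs it into the multiplicative constant $C$; this is the only delicate bookkeeping in an otherwise routine argument (a variant of the classical reverse doubling lemma for doubling measures on uniformly perfect spaces, cf.\ Heinonen's book). No other obstacle is expected, and the dependence of the constants on the doubling parameter $D_\nu$ and on $\lambda$ is explicit throughout.
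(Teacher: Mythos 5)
Your argument is correct and is exactly the standard reverse-doubling proof for doubling measures on uniformly perfect metric spaces; the paper gives no proof of its own and simply refers to \cite[Chapter 13]{Heinonen}, where this same gap argument appears. A minor remark: since the lemma assumes $R < \diam(X)$, the boundary regime you flag at the end never actually arises (every scale $\kappa^{j+1}r/2$ in your iteration is at most $R/2 < \diam(X)$), and the scale factor $\kappa$ only needs to depend on $\lambda$, not on $D_\nu$.
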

If $\nu$ is a doubling measure, we refer to $\alpha_L$ and $\alpha_U$ in \eqref{eq:RevDoubling} as \emph{lower and upper exponents} of $\nu$, respectively.

\subsection{Energy structures}\label{subsec:SobolevSpace}
Next, we introduce the framework of \emph{$p$-energy structures}, which is a general formalism of Sobolev spaces and energies.

First, we need the following definition, which is a slightly modified version of that in \cite[Definition 5.4]{barlow2018stability}.
\begin{definition}\label{def:BSF}
    The term \emph{scale function} refers to any function $\Psi : X \times (0,\infty) \to (0,\infty)$ which satisfies the following doubling type property for some $\beta_U,\beta_L > 0$.
    There is $C \geq 1$ such that for all $x,y \in X$ and $0 < r \leq R < \infty$ such that $B(y,r) \cap B(x,R) \neq \emptyset$,
    \begin{equation}\label{eq:Psi}
        C^{-1}\left(\frac{R}{r}\right)^{\beta_L} \leq \frac{\Psi(x,R)}{\Psi(y,r)} \leq C\left(\frac{R}{r}\right)^{\beta_U}.
    \end{equation}
    We refer to $\beta_L$ and $\beta_U$ as \emph{lower} and \emph{upper exponents} of $\Psi$, respectively.
\end{definition}

\begin{definition}\label{def:SSS}
    Let $p \geq 1$.
    A \emph{$p$-energy structure} on a metric measure space $(X,d,\mu)$ is a triplet $(\mathcal{F}_p,\Gamma_p,\Psi)$ consisting of the following data.
    \begin{enumerate}[label=\textup{(\arabic*)}, leftmargin=*]
    \vspace{3pt}
    \item $\mathcal{F}_p$ is subset of $L^p(X,\mu)$ which we call a \emph{Sobolev space}.
    The members of $\mathcal{F}_p$ are called \emph{Sobolev functions}. 
    \vspace{3pt}
    \item $\Gamma_p$ assigns to every $f \in \mathcal{F}_p$ a non-negative finite Borel measure denoted $\Gamma_p\Span{f}$. We call the measure $\Gamma_p\Span{f}$ the \emph{energy measure} of $f$. To emphasize that these are Dirichlet type $p$-energies in many practical cases, we will frequently use the notation
    \[
         \int_{A} \, d\Gamma_p\Span{f} = \Gamma_p\Span{f}(A).
    \]
    \item $\Psi : X \times (0,\infty) \to (0,\infty)$ is a scale function such that the pair $(\mathcal{F}_p,\Gamma_p)$ satisfies both the Poincar\'e inequality \ref{eq:PI} (Definition \ref{def:PI}) and the upper capacity estimate \ref{eq:UCap} (Definition \ref{def:Ucap}).
    \end{enumerate}
\end{definition}

The precise definitions of the two conditions in Definition \ref{def:SSS}-(3) are postponed to the following subsection.
We first discuss some conventions and examples.

We will often abbreviate and refer to $(\mathcal{F}_p,\Gamma_p,\Psi)$ simply as a $p$-energy structure without explicitly mentioning the metric measure space $(X,d,\mu)$.
We emphasize already here that we do not impose any locality or truncation assumptions, which are quite central concepts in the related literature; see for instance \cite{barlow2018stability,barlow2006stability}. We also do not consider any vector space structure on $\mathcal{F}_p$, even though such a structure is present in all the examples we consider.

\begin{example}\label{ex:Classical}
     Our model example of a $p$-energy structure is the standard Euclidean setting, where $\mathcal{F}_p := W^{1,p}(\R^n)$ and
    \[
        \Gamma_p\Span{f}(A):= \int_A \abs{\nabla f}^p\, dx.
    \]
    On more general metric measure spaces $(X,d,\mu)$, we could consider
    \[
        \Gamma_p\Span{f}(A) := \int_A g_f^p \, d\mu
    \]
    where $g_f$ is some abstract counterpart of $\abs{\nabla f}$.
    A typical choice for $g_f$ in analysis on metric spaces literature is an \emph{upper gradient} of Heinonen--Koskela \cite{HK} or a \emph{Haj{\l}asz gradient} \cite{hajlasz1996sobolev}.
    A common counterpart for $\abs{\nabla f}$ when $f$ is a Lipschitz function is the \emph{pointwise Lipschitz constant}; see \cite[Chapter 8]{HKST},
    \[
        \Lip(f)(x) := \limsup_{r \to 0^+} \sup_{y \in B(x,r)} \frac{\abs{f(x) - f(y)}}{r}.
    \]
    Lastly, we could also consider \emph{two measure} settings, namely we assign
    $\Gamma_p\Span{f}(A) := \int_A g_f^p \, d\nu$ and $\nu$ is a Borel measure of $(X,d)$ different from $\mu$.
    The settings where $\nu$ has a density with respect to $\mu$ is typically called \emph{two-weighted}; see for instance \cite{chanillowheeden85}.
    For further results on two measure settings, see \cite{kinnunenvahakangas2019} and references therein.
\end{example}

\begin{example}
    Our second model example is a Dirichlet form; see Section \ref{sec:Dir} for the precise definitions. Given a strongly local Dirichlet form $(\mathcal{E},\mathcal{F})$ on $L^2(X,\mu)$, where $(X,d,\mu)$ is a sufficiently regular metric measure space, the formula
    \[
    \int_X \varphi \, d\Gamma\Span{f} = \mathcal{E}(f,f\varphi) - \frac{1}{2}\mathcal{E}(f^2,\varphi) \text{ for all } \varphi \in \mathcal{F} \cap C_c(X)
    \]
    uniquely determines Borel measures $\Gamma\Span{f}$ for $f \in \mathcal{F}$.
    Similar approaches to the construction of energy measures for nonlinear counterparts of the Dirichlet form $(\mathcal{E}_p,\mathcal{F}_p)$ with $p \neq 2$ have been studied recently in \cite{KajinoContraction,Sasaya2025}.
\end{example}

\begin{example}\label{example:Non-local}
    The same formalism is applicable to non-local energies as well. For instance, when $\mathcal{F}_p := W^{s,p}(\R^n)$ is the fractional Sobolev space, we can assign $f \in \mathcal{F}_p$ the measure
    \[
        \Gamma_p\Span{f}(A):= \int_{\R^n}\int_A \frac{\abs{f(x) - f(y)}^p}{\abs{x-y}^{sp + n}}\, dxdy.
    \]
   See the book of Maz'ya \cite{Mazya85} for further background.
   Analogous concept can be extended to a doubling metric measure space $(X,d,\mu)$; see for instance \cite{Koskela11}, by assigning
    \[
        \Gamma_p\Span{f}(A) := \int_{X}\int_A \frac{\abs{f(x) - f(y)}^p}{d(x,y)^{sp}\mu(B(y,d(x,y))) }\, d\mu(x)d\mu(y).
    \]
\end{example}

\begin{example}\label{ex:AF}
    The present work is motivated by many examples of natural self-similar energies studied in analysis and diffusions on fractals literature; see \cite{barlow,barlow1989construction,barlow1999brownian,AnalOnFractals,kusuoka1992dirichlet,steinhurst2010diffusions} for Dirichlet forms, \cite{AEBSLaaksoSobolev2025,cao2022p,herman2004p,kigami,murugan2023first,Shimizu} for $p$-energies, and therein references for further examples. See also the recent survey \cite{kajino2025penergyformsfractalsrecent}.
    We note that some of the theories in analysis on fractals are not covered by the analysis on metric spaces literature, and the two fields have mostly developed independently; see the survey \cite{barlow2013analysis} on the topic and also \cite[Introductions]{kigami}.
    For instance, the recent study of the author, Shimizu and Eriksson--Bique \cite{AEBSLaaksoSobolev2025} discovered that, given certain natural Sobolev spaces $\mathcal{F}_p$ for $p > 1$ on the Laakso diamond space, it holds for all $p \neq q$ that $\mathcal{F}_{p} \cap \mathcal{F}_q$ only contains constant function. In other words, Sobolev spaces for distinct exponents are essentially disjoint. Such behavior never happens in the usual settings of analysis on metric spaces simply by the inclusion of Lipschitz functions.
\end{example}

\subsection{Main assumptions}\label{Subsec:PI}
We introduce the precise definitions of the two conditions in Definition \ref{def:SSS}-(3).

\begin{definition}\label{def:PI}
    Fix $p \geq 1$ and let $\mathcal{F}_p$ and $\Gamma_p$ be as in (1)-(2) of Definition \ref{def:SSS}.
    For a given scale function $\Psi$, we say that the pair  $(\mathcal{F}_p,\Gamma_p)$ satisfies the \emph{Poincar\'e inequality} \ref{eq:PI} if there are constants $C,\sigma \geq 1$ satisfying the following condition.
    For all $x \in X$, $r > 0$ and $f \in \mathcal{F}_p$,
    \begin{equation}\label{eq:PI}
        \tag{$\textup{PI}_p(\Psi)$}
        \int_{B(x,r)} \abs{f - f_{B(x,r)}}^p \, d\mu \leq C \Psi(x,r) \int_{B(x,\sigma r)} \, d\Gamma_p\Span{f}.
    \end{equation}
\end{definition}

We present a few familiar examples.

\begin{example}\label{ex:ClassicalPI}
    The condition \ref{eq:PI} can be regarded as a $(p,p)$-Poincar\'e type inquality. 
    In the standard Euclidean Sobolev space $W^{1,p}(\R^n)$ we have
    \[
        \int_{B(x,r)} \abs{f - f_{B(x,r)}}^p \, dx \leq C r^p \int_{B(x,r)} \abs{\nabla f}^p \, dx,
    \]
    which is \ref{eq:PI} for $\Psi(x,r) = r^p$ and $d\Gamma_p\Span{f} = \abs{\nabla f}^p dx$; see \cite{saloff2002aspects}.
    For the fractional Sobolev space $W^{s,p}(\R^n)$, it is a direct computation to show that
    \[
        \int_{B(x,r)} \abs{f - f_{B(x,r)}}^p \leq Cr^{sp} \int_{B(x,r)}\int_{B(x,r)} \frac{\abs{f(x) - f(y)}^p}{\abs{x - y}^{n + sp}}\, dxdy,
    \]
    which implies \ref{eq:PI} for $\Psi(x,r) = r^{sp}$ and $\Gamma_p\Span{f}$ would be as in Example \ref{example:Non-local}.
\end{example}

\begin{example}\label{ex:TwoMeasure}
    Two measure $(p,p)$-Poincar\'e inequalities in metric spaces are typically of the form
    \[
        \kint_{B(x,r)} \abs{f - f_{B(x,r)}}^p\, d\mu \leq Cr^p \kint_{B(x,r)} g_f^p \, d\nu
    \]
    where $\mu$ and $\nu$ are doubling measures on $X$ and $g_f$ is some appropriate counterpart of $\abs{\nabla f}$; see for instance \cite{kinnunenvahakangas2019,chanillowheeden85}.
    This is the Poincar\'e inequality \ref{eq:PI} for
    \[
        \Psi(x,r) := r^p \frac{\mu(B(x,r))}{\nu(B(x,r))}.
    \]
    In particular, $\Psi(x,r)$ may also depend on $x \in X$ and not only $r$.
\end{example}

\begin{definition}\label{def:Ucap}
    Fix $p \geq 1$ let $\mathcal{F}_p$ and $\Gamma_p$ be as in (1)-(2) of Definition \ref{def:SSS}.
    For a given scale function $\Psi$, we say that the pair $(\mathcal{F}_p,\Gamma_p)$ satisfies the \emph{upper capacity estimate} \ref{eq:UCap} if there are constants $C,\kappa > 1$ satisfying the following condition.
    For every $x \in X$ and $r \in (0,\diam(X))$ there is a cutoff function $\varphi \in \mathcal{F}_p$ for $B(x,r) \subseteq B(x,\kappa r)$ satisfying 
    \begin{equation}\label{eq:UCap}
        \tag{$\textup{Cap}_{p,\leq}(\Psi)$}
        \int_X \, d\Gamma_p\Span{\varphi} \leq C \frac{\mu(B(x,r))}{\Psi(x,r)}.
    \end{equation}
\end{definition}

\begin{example}\label{ex:EuclideanUcap}
In the standard Euclidean setting $\mathcal{F}_p : = W^{1,p}(\R^n)$ and $\Psi(x,r) := r^p$, the upper capacity estimate \ref{eq:UCap} is obtained via the cutoff functions
\[
    \varphi(y) := \left(\frac{2r - \abs{x - y}}{r} \lor 0\right) \land 1.
\]
This can be seen by noting that $\varphi$ satisfies $\abs{\nabla \varphi} \leq r^{-1} \mathds{1}_{B(x,2r)}$ where $\mathds{1}_{B(x,2r)}$ is the characteristic function.
See \cite{BBCapacity23,BBLJUHA} and references therein for capacity estimates in other settings.
\end{example}

We briefly elaborate the energy estimate in \ref{eq:UCap}, and see the proof of Proposition \ref{prop:Converse} for further details.
Assume that $\varphi$ is a cutoff function for $B(x,r) \subseteq B(x,\kappa r)$ where $\kappa > 1$.
If the Poincar\'e inequality \ref{eq:PI} were to hold, we can use the doubling property of $\mu$ to show that for sufficiently small open balls $B(x,r) \subseteq X$,
\[
    \mu(B(x,r)) \lesssim \int_{B(x,\tau r)} \abs{\varphi - \varphi_{B(x,\tau r)}}^p \, d\mu \lesssim \Psi(x,r) \Gamma_p\Span{\varphi}(X)
\]
where $\tau \geq \kappa$ is some fixed constant depending only on the doubling constant of $\mu$ and $\kappa$.
From this, we obtain the energy lower bound
\[
    \Gamma_p\Span{\varphi}(X) \gtrsim  \frac{\mu(B(x,r))}{\Psi(x,r)}.
\]
The upper capacity estimate \ref{eq:UCap} thus asserts that the lower bound of the energy $\Gamma_p\Span{\varphi}(X)$ provided by the Poincar\'e inequality is sharp for some cutoff functions.

\subsection{Auxiliary lemmas}
We present two lemmas which are helpful in the next section.

\begin{lemma}\label{lemma3}
    Let $p \geq 1$ and $(\mathcal{F}_p,\Gamma_p,\Psi)$ be a $p$-energy structure.
    Let $\tau \geq 1$, $x,y \in X$ and $r,s > 0$ such that
    $B(x,r) \cap B(y,s) \neq \emptyset$ and $\tau^{-1}s \leq r \leq \tau s$.
    There are constants $C = C,\sigma \geq 1$ depending only on $\tau$, $p$, the constants in \ref{eq:PI}, the constants associated to $\Psi$ in \eqref{eq:Psi}, the constants in \eqref{eq:muDoubling} and \eqref{eq:RadDiam} such that for all $f \in \mathcal{F}_p$,
    \begin{equation}\label{eq:DifferenceAverages}
        \abs{f_{B(x,r)} - f_{B(y,s)}} \leq C \left(\frac{\Psi(x,r)}{\mu(B(x,r))}
        \int_{B(x,\sigma r)} \, d\Gamma_p\Span{f} \right)^{\frac{1}{p}}.
    \end{equation}
\end{lemma}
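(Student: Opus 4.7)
The plan is to enclose both balls $B(x,r)$ and $B(y,s)$ in a single auxiliary ball of comparable radius, and then control each difference $|f_{B(x,r)} - f_B|$ and $|f_{B(y,s)} - f_B|$ by a single application of \ref{eq:PI} using the standard Hölder--telescoping device. This will reduce the assertion to the Poincaré inequality combined with doubling and the scale function's quasi-monotonicity in \eqref{eq:Psi}.

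First, I would set $R := (1+2\tau)r$ and take $B := B(x,R)$. Since $B(x,r)\cap B(y,s)\neq\emptyset$ and $s\leq \tau r$, a direct application of the triangle inequality shows $B(y,s)\subseteq B(x,r+2s)\subseteq B$, so both $B(x,r)$ and $B(y,s)$ lie inside $B$. By doubling \eqref{eq:muDoubling} applied finitely many times (the number of which depends only on $\tau$ and $D$), one has $\mu(B)\lesssim \mu(B(x,r))$ and $\mu(B)\lesssim \mu(B(y,s))$. By the triangle inequality, the desired estimate follows once we show
\[
|f_{B(x,r)} - f_B| \;\lesssim\; \left(\frac{\Psi(x,r)}{\mu(B(x,r))}\int_{B(x,\sigma r)} d\Gamma_p\langle f\rangle\right)^{1/p}
\]
and the analogous bound with $(y,s)$ in place of $(x,r)$.

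To bound $|f_{B(x,r)}-f_B|$, I would write, using $B(x,r)\subseteq B$,
\[
|f_{B(x,r)}-f_B| \leq \frac{1}{\mu(B(x,r))}\int_{B(x,r)} |f-f_B|\,d\mu \leq \frac{1}{\mu(B(x,r))}\int_{B} |f-f_B|\,d\mu.
\]
If $p>1$, Hölder's inequality gives $\int_B |f-f_B|\,d\mu \leq \mu(B)^{1/p'}\bigl(\int_B |f-f_B|^p\,d\mu\bigr)^{1/p}$; when $p=1$ the same line holds trivially. Combined with $\mu(B)\lesssim\mu(B(x,r))$, this yields
\[
|f_{B(x,r)}-f_B|^p \;\lesssim\; \frac{1}{\mu(B(x,r))}\int_B |f-f_B|^p\,d\mu.
\]
Now applying \ref{eq:PI} to the ball $B=B(x,R)$ with $R = (1+2\tau)r$, and then using the quasi-monotonicity \eqref{eq:Psi} of $\Psi$ to replace $\Psi(x,R)$ with $\Psi(x,r)$ at the cost of a multiplicative constant depending on $\tau,\beta_U$, we arrive at
\[
|f_{B(x,r)}-f_B|^p \;\lesssim\; \frac{\Psi(x,r)}{\mu(B(x,r))}\int_{B(x,\sigma R)} d\Gamma_p\langle f\rangle.
\]
The ball $B(y,s)$ argument is identical, using $B(y,s)\subseteq B$ and the fact that $B(x,\sigma R)$ also contains $B(y,\sigma' s)$ for an appropriate $\sigma'$, together with $\mu(B(y,s))\asymp\mu(B(x,r))$ and $\Psi(y,s)\asymp \Psi(x,r)$ coming from \eqref{eq:Psi} (the hypotheses $\tau^{-1}s\leq r\leq\tau s$ and $B(x,r)\cap B(y,s)\neq\emptyset$ are precisely what allow both comparisons).

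Finally, combining the two estimates via the triangle inequality $|f_{B(x,r)}-f_{B(y,s)}|\leq |f_{B(x,r)}-f_B|+|f_{B(y,s)}-f_B|$ gives \eqref{eq:DifferenceAverages} with some constant $\sigma$ depending only on the original $\sigma$ in \ref{eq:PI} and on $\tau$. There is no real obstacle; the main bookkeeping concerns tracking the dependence of constants through the scale function and doubling, and verifying that the containments $B(x,r), B(y,s)\subseteq B$ and $B(y,\sigma' s)\subseteq B(x,\sigma R)$ occur with multiplicative constants depending only on $\tau$.
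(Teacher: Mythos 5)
Your proof is correct and takes essentially the same approach as the paper's: enclose both small balls in a single comparable ball, split by the triangle inequality, apply H\"older and the Poincar\'e inequality \ref{eq:PI} on that large ball, and then absorb constants via doubling of $\mu$ and the quasi-monotonicity \eqref{eq:Psi} of $\Psi$. The only cosmetic difference is that you center the auxiliary ball at $x$ (with $R=(1+2\tau)r$), whereas the paper centers it at a point $z\in B(x,r)\cap B(y,s)$ (with $R=2\tau r$); both choices work identically.
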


\begin{proof}
    Choose a point $z \in B(x,r) \cap B(y,s)$ and let $R := 2\tau r$.
    By noting that $B(x,r),B(y,s) \subseteq B(z,R)$ and that $R,r,s$ are all comparable,
    \begin{align*}
        & \quad \, \left\lvert f_{B(x,r)} - f_{B(y,s)} \right\rvert\\
        \leq & \quad \left\lvert f_{B(x,r)} - f_{B(z,R)} \right\rvert + \left\lvert f_{B(y,s)} - f_{B(z,R)} \right\rvert \\
        \leq & \quad \kint_{B(x,r)} \abs{f -f_{B(z,R)}} \,d\mu + \kint_{B(y,s)} \abs{f -f_{B(z,R)}} \, d\mu 
        \\
        \lesssim & \quad \kint_{B(z,R)} \abs{f - f_{B(z,R)}}\, d\mu && (\text{Doubling property \eqref{eq:muDoubling}}) \\
        \leq & \quad \left(\kint_{B(z,R)} \abs{f - f_{B(z,R)}}^p \, d\mu\right)^{\frac{1}{p}} && (\text{Hölder's ineq.})\\
        \lesssim & \quad \left(\frac{\Psi(z,R)}{\mu(B(z,R))}
        \int_{B(z,\sigma_0 R)} \, d\Gamma_p\Span{f} \right)^{\frac{1}{p}} && \text{(Poincar\'e ineq. \ref{eq:PI})}\\
        \lesssim & \quad \left(\frac{\Psi(x,r)}{\mu(B(x,r))}
        \int_{B(x,\sigma r)} \, d\Gamma_p\Span{f} \right)^{\frac{1}{p}}. && (\eqref{eq:muDoubling} \text{ and } \eqref{eq:Psi})
    \end{align*}
    We chose a suitable $\sigma$, which depends on $\tau$ and $\sigma_0$, in the last row of the previous display.
\end{proof}

We also introduce a general notion of Lebesgue differentiation.
The proof is based on a standard maximal function technique; see \cite[Proof of Theorem 5.1]{bjorn2011nonlinear} and \cite[Lemma 4.5]{BJORNKala} for similar arguments.

\begin{lemma}\label{lemma:Lebesguept}
    Let $1 \leq p \leq q$, $\Theta$ be a scale function and $(\mathcal{F}_p,\Gamma_p,\Psi)$ be a $p$-energy structure.
    Let $\nu$ be a Borel measure on $X$ for which there are constants $K \geq 1$ and $R > 0$ such that
        \begin{equation}\label{eq:Lebesgue1}
            \nu(B(x,r))^{\frac{1}{q}} \leq K \Theta(x,r) \left( \frac{\mu(B(x,r))}{\Psi(x,r)} \right)^{\frac{1}{p}}
        \end{equation}
    for all $x \in X$ and $0 < r \leq R$.
    Then for every $f \in \mathcal{F}_p$ the following holds for $\nu$-almost every $x \in X$. Given any pair of sequences $y_i\in X$ and $r_i > 0$ such that $x \in B(y_i,r_i)$ for all $i \in \N$ and $r_i \to 0^+$ as $i \to \infty$, the limit
    \begin{equation}\label{eq:Lebesgue2}
        \lim_{i \to \infty} \kint_{B(y_i,r_i)} f\, d\mu
    \end{equation}
    exists and is independent of the choices of the sequences.
\end{lemma}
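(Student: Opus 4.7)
The plan is a dyadic chaining argument combined with a Borel--Cantelli step for the measure $\nu$, in the spirit of \cite[proof of Theorem 5.1]{bjorn2011nonlinear} and \cite[Lemma 4.5]{BJORNKala}. Fix $f \in \mathcal{F}_p$, set $r_k := 2^{-k}$ for $k \in \N$ large enough that $r_k \leq R$, and write $f_k(x) := f_{B(x, r_k)}$ for the averages along dyadic radii. I would first show that $\{f_k(x)\}_k$ is Cauchy for $\nu$-almost every $x$, and then that any other admissible sequence yields the same limit.

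For the Cauchy step, Lemma \ref{lemma3} applied with $\tau = 2$ yields
\[
|f_{k+1}(x) - f_k(x)| \leq C A_k(x), \qquad A_k(x) := \left( \frac{\Psi(x, r_k)}{\mu(B(x, r_k))} \int_{B(x, \sigma r_k)} d\Gamma_p\Span{f} \right)^{1/p},
\]
so it suffices to prove $\sum_k A_k(x) < \infty$ for $\nu$-a.e.\ $x$. Fubini's theorem together with the doubling of $\mu$ and $\Psi$ (used to compare $\Psi(x, r_k)/\mu(B(x, r_k))$ with $\Psi(y, r_k)/\mu(B(y, r_k))$ whenever $x \in B(y, \sigma r_k)$) gives
\[
\int_X A_k(x)^p \, d\nu(x) \lesssim \int_X \frac{\Psi(y, r_k)}{\mu(B(y, r_k))} \, \nu(B(y, \sigma r_k)) \, d\Gamma_p\Span{f}(y).
\]
I would then raise \eqref{eq:Lebesgue1} to the $p$-th power and interpolate $\nu(B) = \nu(B)^{p/q} \cdot \nu(B)^{1 - p/q}$, applying the hypothesis to the first factor only. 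Using doubling to absorb the $\sigma$-enlargements, the $\Psi$- and $\mu$-weights cancel cleanly and one is left with
\[
\int_X A_k(x)^p \, d\nu(x) \lesssim \int_X \Theta(y, r_k)^p \, \nu(B(y, r_k))^{1 - p/q} \, d\Gamma_p\Span{f}(y).
\]
Since $\nu$ is locally finite by \eqref{eq:Lebesgue1} and $\Gamma_p\Span{f}$ is a finite measure, the scale function inequality \eqref{eq:Psi} applied to $\Theta$ yields $\Theta(y, r_k)^p \lesssim r_k^{p\delta}$ for some $\delta > 0$, so the right-hand side decays geometrically in $k$.

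Markov's inequality then gives $\nu\left(\{ x : A_k(x) > r_k^{\delta/2} \}\right) \lesssim r_k^{p\delta/2}$, which is summable, and Borel--Cantelli produces a set of full $\nu$-measure on which $A_k(x) \leq r_k^{\delta/2}$ eventually. In particular $\sum_k A_k(x) < \infty$ there, so $\{f_k(x)\}_k$ is Cauchy. For independence from the sequence, given $y_i, r_i$ with $x \in B(y_i, r_i)$ and $r_i \to 0^+$, pick $k_i$ with $r_{k_i + 1} \leq r_i < r_{k_i}$, so that $B(x, r_{k_i})$ and $B(y_i, r_i)$ share the point $x$ and have radii with ratio in $(1/2, 2]$; a second application of Lemma \ref{lemma3} gives $|f_{B(y_i, r_i)} - f_{k_i}(x)| \leq C A_{k_i}(x) \to 0$ on the full-measure set above.

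The main obstacle I expect is the bookkeeping in the Fubini step. The hypothesis \eqref{eq:Lebesgue1} is asymmetric in $p$ and $q$, and a direct substitution leaves mismatched powers of $\Psi$ and $\mu$ that do not decay in $r_k$. The interpolation trick $\nu(B) = \nu(B)^{p/q} \nu(B)^{1 - p/q}$ is what fixes the mismatch: it forces the $\Psi$ and $\mu$ factors to cancel exactly, leaves $\Theta(y, r_k)^p$ as the only $r_k$-dependent factor, and isolates the remaining $\nu(B(y, r_k))^{1 - p/q}$ as a locally bounded weight against which the finite measure $\Gamma_p\Span{f}$ integrates. After this step the remaining work is standard maximal function/Borel--Cantelli machinery adapted from the metric Sobolev space literature, together with careful tracking of the $\sigma$-enlargements and the doubling constants for $\Psi$, $\mu$, and $\Theta$.
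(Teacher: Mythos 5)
Your Fubini-based approach is genuinely different from the paper's, which uses a noncentered maximal function and a weak-type estimate obtained through the $5r$-covering lemma rather than a Borel--Cantelli argument. However, there is a gap. The critical step in your proposal is the claim that ``the scale function inequality \eqref{eq:Psi} applied to $\Theta$ yields $\Theta(y,r_k)^p \lesssim r_k^{p\delta}$''. This does not follow from \eqref{eq:Psi}: that property only controls the \emph{ratio} $\Theta(x,R)/\Theta(y,r)$ for pairs of balls that intersect, so the best one can extract is $\Theta(y,r_k)^p \lesssim (r_k/R)^{p\delta}\,\Theta(y,R)^p$, and $\Theta(y,R)$ need not be uniformly bounded over $y \in X$. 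Indeed, the definition of a scale function permits $\Theta(y,R)$ to grow polynomially in $d(y,x_0)$ for any fixed base point $x_0$. Without a uniform bound on $\Theta(y,r_k)^p\,\nu(B(y,\sigma r_k))^{1-p/q}$ in $y$, the integral $\int_X \Theta(y,r_k)^p\,\nu(B(y,\sigma r_k))^{1-p/q}\,d\Gamma_p\Span{f}(y)$ need not be finite, let alone decay geometrically in $k$, and your Markov and Borel--Cantelli steps are therefore not justified as written.

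The gap is repairable by localization (prove the conclusion for $\nu$-a.e.\ $x$ in a fixed ball $B(x_0,R_1)$, over which the relevant $y$ are confined to $B(x_0,R_1+\sigma R)$ where both $\Theta(y,R)$ and $\nu(B(y,\sigma r_k))$ are bounded, and then exhaust $X$), but the paper avoids the issue entirely. It introduces the noncentered maximal function
\[
M(x):=\sup_{x\in B(y,r)}\left(\Theta(y,r)^p\,\frac{\mu(B(y,r))}{\Psi(y,r)}\right)^{-1}\int_{B(y,r)}d\Gamma_p\Span{f}
\]
and proves the weak-type bound $\nu(\{M>s\})^{p/q}\lesssim s^{-1}\,\Gamma_p\Span{f}(X)$ by a $5r$-covering of the open level set. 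The crucial device is the subadditivity $(a+b)^{p/q}\leq a^{p/q}+b^{p/q}$ (valid since $p\leq q$), which allows \eqref{eq:Lebesgue1} to be applied directly to $\nu(B_i)^{p/q}$ with no lingering $\nu(B)^{1-p/q}$ factor; the global finiteness of $\Gamma_p\Span{f}$ then closes the estimate without any localization or uniform bound on $\Theta$. Combining this with a telescoping bound of the form $\abs{f_{B(x,r)}-f_{B(y,s)}}\lesssim (r/R)^\delta\,\Theta(x,R)\,M(x)^{1/p}$, essentially the chaining you sketch with Lemma~\ref{lemma3}, completes the proof. Your telescoping and sequence-independence steps are otherwise sound; the unjustified uniform bound on $\Theta$ is the real flaw.
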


\begin{proof}
    Fix $f \in \mathcal{F}_p$.
    Consider the following variant of the maximal function,
    \[
        M(x) := \sup_{x \in B(y,r)} \left( \Theta(y,r)^{p}\frac{\mu(B(y,r))}{\Psi(y,r)} \right)^{-1} \int_{B(y,r)}\, d\Gamma_p\Span{f},
    \]
    where the supremum is taken over all $y \in Y$ and $r \in (0,R/5)$ such that $x \in B(y,r)$.

    We first show that $M(x) < \infty$ for $\nu$-almost every $x \in X$. The idea of the proof is very standard; see \cite[Lemma 3.12]{bjorn2011nonlinear}.
    Fix $s > 0$. Because $M$ is a non-centered maximal type function, it is lower semicontinuous, and in particular the level set
    \[
        E_s := \{ x \in X : M(x) > s \} \subseteq X
    \]
    is open. Then for every $x \in E_s$, there is an open ball $x \in B(z_x,r_x) \subseteq E_s$ where $r_x \in (0,R/5)$ such that
    \[
        \Theta(z_x,r_x)^{p} \frac{\mu(B(z_x,r_x))}{\Psi(z_x,r_x)} < \frac{1}{s}\int_{B(z_x,r_x)}\, d\Gamma_p\Span{f}.
    \]
    By $5r$-covering lemma; see \cite[Lemma 1.7]{bjorn2011nonlinear}, there is a countable family $I \subseteq E_s$ such that $\{ B(z_x,r_x) \}_{x \in I}$ is disjoint and $\{ B(z_x,5r_x) \}_{x \in I}$ covers $E_s$.
    
    Note that $q \geq p$ implies $(a + b)^{p/q} \leq a^{p/q} + b^{p/q}$ for all $a,b \geq 0$. We use this, the assumption of the lemma \eqref{eq:Lebesgue1}, the doubling property of $\mu$ \eqref{eq:muDoubling} and the doubling properties of $\Psi,\Theta$ \eqref{eq:Psi},
    \begin{align*}
        \nu(E_s)^{\frac{p}{q}} & \leq \sum_{x \in I} \nu(B(z_x,5r_x))^{\frac{p}{q}} \lesssim \sum_{x \in I}\Theta(B(z_x,5r_x))^p \frac{\mu(B(z_x,5r_x))}{\Psi(z_x,5r_x)}\\
        & \lesssim \sum_{x \in I}\Theta(z_x,r_x)^p \frac{\mu(B(z_x,r_x))}{\Psi(z_x,r_x)}.
    \end{align*}
    The combination of the inequalities in the previous two displays along with the facts that $\Gamma_p\Span{f}$ is a Borel measure and $\{ B(z_x,r_x) \}_{x \in I}$ is a countable disjoint collection of Borel sets,
    \begin{align*}
        \nu(E_s)^{\frac{p}{q}} & \lesssim \frac{1}{s} \int_{X} \, d\Gamma_p\Span{f}.
    \end{align*}
    Since $\Gamma_p\Span{f}$ is assumed to be a finite Borel measure, we see that $M(x) < \infty$ for $\nu$-almost every $x \in X$ by letting $s \to \infty$.

    By the previous part of the proof, it is sufficient to prove the conclusion of the current lemma for all points $x \in X$ where $M(x) < \infty$. We do this by establishing the following estimate for every $x \in X$. If $y \in X$ and $0 < s < r \leq R/(5\sigma)$, where $R$ is as in the claim and $\sigma$ as in \ref{eq:PI}, then whenever $x \in B(y,s)$,
    \[
        \abs{f_{B(x,r)} - f_{B(y,s)}} \lesssim \left(\frac{r}{R} \right)^{\delta}\Theta(x,R)M(x)^{\frac{1}{p}},
    \]
    where $\delta > 0$ is a lower exponent of $\Theta$ in \eqref{eq:Psi}. The desired convergence would follow by letting $r,s \to 0^+$.

    First, assume $0 < r/2 \leq s \leq r \leq R/(5\sigma)$.
    It follows from Lemma \ref{lemma3} and the present assumptions,
    \begin{align*}
        \abs{f_{B(x,r)} - f_{B(y,s)}}
        & \lesssim \Theta(x,r)\left( \left( \Theta(x,\sigma r)^{p}\frac{\mu(B(x,\sigma r))}{\Psi(x,\sigma r)} \right)^{-1} \int_{B(x,\sigma r)} \, d\Gamma_p\Span{f}  \right)^{\frac{1}{p}}\\
        & \leq \Theta(x,r) M(x)^{\frac{1}{p}} \lesssim \left(\frac{r}{R}\right)^{\delta} \Theta(x,R) M(x)^{\frac{1}{p}}.
    \end{align*}
    Next, assume $0 < r/2^{k+1} \leq s \leq r/2^k < \infty$ for $k \in \N$. By applying the previous inequality $k+2$ times,
    \begin{align*}
        \abs{f_{B(x,r)} - f_{B(y,s)}} & \leq \sum_{i = 0}^k \abs{f_{B(x,2^{-i}r)} - f_{B(x,2^{-(i+1)}r}} + \abs{f_{B(x,2^{-(k+1)}r)} - f_{B(y,s)}}\\
        & \lesssim \left(\frac{r}{R}\right)^{\delta} \Theta(x,R)M(x)^{\frac{1}{p}} \sum_{i = 0}^{k+1} 2^{-i\delta} \lesssim \left(\frac{r}{R}\right)^{\delta} \Theta(x,R)M(x)^{\frac{1}{p}}.
    \end{align*}
    This completes the proof.
\end{proof}

Before moving to the next section, we need one additional tool to analyze general Borel measures, and that is to choose suitable point-wise defined $\mu$-representatives of Sobolev functions. We need them to integrate Sobolev functions against measures which are not absolutely continuous with respect to $\mu$.
Nevertheless, any choice performs equally well as long as the they are Borel measurable, $\tilde{f} = f$ whenever $f$ is continuous, and the telescoping argument in the proof of Proposition \ref{prop:LocalUD} is valid.

For a given $f \in \mathcal{F}_p$, its \emph{precise representative} $\tilde{f} : X \to [-\infty,\infty]$ is the point-wise defined Borel function
\begin{equation}\label{eq:Exact}
    \tilde{f}(x) := \limsup_{i \to \infty} \sup_{x \in B(y,2^{-i})}\kint_{B(y,2^{-i})} f\, d\mu,
\end{equation}
where the supremum inside the limit superior is taken over all $y \in X$ such that $x \in B(y,2^{-i})$.
We note that $\tilde{f}$ is a point-wise limit of lower-semicontinuous functions, hence Borel measurable. By the Lebesgue differentiation theorem in metric spaces, which holds for instance when the ambient space is locally compact and the measure is doubling, $\tilde{f}(x) = f(x)$ for $\mu$-almost every $x \in X$; we refer to \cite[Chapter 2]{Heinonen} and \cite[Chapter 14]{SobolevMetPoincare} for further details.

\section{Poincar\'e inequalities for general measures}\label{sec:SPI}
In this section, we study general Poincar\'e type inequalities. The proofs here rely heavily on techniques from the analysis on metric spaces literature.
Throughout the section, we use the same notations and conventions as in the previous section. Namely, $X = (X,d,\mu)$ is a metric measure where $(X,d)$ is a uniformly perfect metric space and $\mu$ is a doubling measure on $(X,d)$.

We now state the main theorem of the section.

\begin{theorem}\label{thm:SobolevPoincare}
    Let $1 \leq p \leq q$, $(\mathcal{F}_p,\Gamma_p,\Psi)$ be a $p$-energy structure, $\Theta$ be a scale function and $\nu$ be a Borel measure of $X$.
    Then the following two conditions are equivalent.
    \begin{enumerate}[label=\textup{(T\arabic*)}, leftmargin=*]

        \vspace{2pt}
        \item \label{T1}
        There is a constant $K \geq 1$ such that for all $x \in X$ and $r \in (0,\diam(X))$,
        \begin{equation*}
            \nu(B(x,r))^{\frac{1}{q}} \leq K \Theta(x,r) \left( \frac{\mu(B(x,r))}{\Psi(x,r)} \right)^{\frac{1}{p}}
        \end{equation*}
        \vspace{2pt}
        \item \label{T2}
        There are constants $C,\sigma \geq 1$ such that for all $x \in X$, $r \in (0,\diam(X))$ and $f \in \mathcal{F}_p$,
        \begin{equation*}
    \left(\int_{B(x,r)} \abs{\tilde{f} - f_{B(x,r)}}^{q} \, d\nu \right)^{\frac{1}{q}} \leq C \Theta(x,r) \left(\int_{B(x,\sigma r)} d\Gamma_p\Span{f} \right)^{\frac{1}{p}}
        \end{equation*}
    \end{enumerate}
    Here $\tilde{f}$ is the precise representative given in \eqref{eq:Exact}.
    This equivalence is quantitative in the sense that the constants appearing in \ref{T2} \textup{(}resp. \ref{T1}\textup{)} depend only on the constants in \ref{T1} \textup{(}resp. \ref{T2}\textup{)}, the constants appearing in \ref{eq:PI},\ref{eq:UCap}, \eqref{eq:muDoubling}, \eqref{eq:RadDiam}, the constants associated with $\Psi$ and $\Theta$ in \eqref{eq:Psi}, $p$ and $q$.
\end{theorem}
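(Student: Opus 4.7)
\textbf{Direction \ref{T2} $\Rightarrow$ \ref{T1}.} I would test \ref{T2} on a cutoff function $\varphi \in \mathcal{F}_p$ for $B(x,r) \subseteq B(x,\kappa r)$ provided by \ref{eq:UCap}. Using the doubling of $\mu$, I pick $\tau \geq \kappa$ with $\varphi_{B(x,\tau r)} \leq 1/2$, which is possible since $\varphi$ vanishes outside $B(x,\kappa r)$. Since $\varphi$ is continuous, $\tilde\varphi = \varphi$, and thus $|\tilde\varphi - \varphi_{B(x,\tau r)}| \geq 1/2$ on $B(x,r)$; applying \ref{T2} at the ball $B(x,\tau r)$ then bounds $\tfrac12 \nu(B(x,r))^{1/q}$ by $C\,\Theta(x,\tau r)\,(\Gamma_p\Span{\varphi}(X))^{1/p}$. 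The energy bound in \ref{eq:UCap} together with the doubling of $\Theta$ from \eqref{eq:Psi} then yields \ref{T1}.

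\textbf{Direction \ref{T1} $\Rightarrow$ \ref{T2}.} The plan is a dyadic Haj{\l}asz--Koskela chaining. Fix $B_0 = B(x_0, r)$ and $f \in \mathcal{F}_p$; since \ref{T1} is precisely condition \eqref{eq:Lebesgue1} of Lemma \ref{lemma:Lebesguept}, for $\nu$-a.e.\ $x \in B_0$ the precise representative $\tilde f(x)$ equals the limit of $\mu$-averages along any shrinking ball sequence through $x$. For each $i \geq 0$ I apply Lemma \ref{lemma:Patching} to a maximal $2^{-i}r$-separated set $\{y_i^j\}_{j \in J_i}$ in $B_0$, giving a cover $B_0 \subseteq \bigcup_j B_i^j$ with $B_i^j := B(y_i^j, 2^{-i}r)$ and bounded overlap for any fixed dilation. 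Choosing $j(i,x)$ with $x \in B_i^{j(i,x)}$ and setting $B_i(x) := B_i^{j(i,x)}$ for $i \geq 1$ and $B_0(x) := B_0$, the telescope
\[
    \tilde f(x) - f_{B_0} = \sum_{i=0}^\infty \bigl(f_{B_{i+1}(x)} - f_{B_i(x)}\bigr)
\]
holds $\nu$-a.e.\ in $B_0$, and Lemma \ref{lemma3} applied to the comparable intersecting pair $B_i(x),B_{i+1}(x)$ gives
\[
    \bigl|f_{B_{i+1}(x)} - f_{B_i(x)}\bigr|^p \lesssim \frac{\Psi(y_i^{j(i,x)}, 2^{-i}r)}{\mu(B_i(x))}\,\Gamma_p\Span{f}(\sigma B_i(x))
\]
for some universal $\sigma$. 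Integrating the $q$-th power over $B_0$ partitioned by $j(i, \cdot)$, using \ref{T1} on each $B_i^j$ to turn $\nu(B_i^j)(\Psi/\mu)^{q/p}$ into $K^q\Theta^q$, the elementary inequality $\sum_j a_j^{q/p} \leq (\sum_j a_j)^{q/p}$ (which uses $q \geq p$), the bounded overlap of $\{\sigma B_i^j\}_j$ inside a single fixed dilate $\sigma' B_0$ to collapse the sum of energies into $\Gamma_p\Span{f}(\sigma' B_0)^{q/p}$, and the doubling lower exponent $\delta > 0$ of $\Theta$ to obtain $\Theta(y_i^j, 2^{-i}r) \leq C\,2^{-i\delta}\Theta(x_0,r)$, I arrive at
\[
    \Bigl(\int_{B_0} \bigl|f_{B_{i+1}(x)} - f_{B_i(x)}\bigr|^q d\nu\Bigr)^{1/q} \lesssim 2^{-i\delta}\,\Theta(x_0,r)\,\Gamma_p\Span{f}(\sigma' B_0)^{1/p}.
\]
Minkowski's inequality in $L^q(\nu, B_0)$ applied to the telescope, together with the geometric convergence $\sum_i 2^{-i\delta} < \infty$, then concludes \ref{T2}.

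\textbf{Expected obstacle.} The principal technical point is the careful bookkeeping of scale functions and overlap constants across the dyadic chain: the hypothesis \ref{T1} must be exercised at \emph{every} scale through doubling of $\Theta, \Psi, \mu$, Lemma \ref{lemma3} demands that consecutive chain balls are comparable and intersecting (hence the sharing of a single dilation parameter $\sigma$), and the energies $\Gamma_p\Span{f}(\sigma B_i^j)$ summed across all $i,j$ must collapse into a single fixed dilate $\sigma' B_0$. The condition $q \geq p$ is essential at the step $\sum_j a_j^{q/p} \leq (\sum_j a_j)^{q/p}$; without it, the $i$-th level estimate would only deliver a weak-type statement and the chaining would not assemble into the strong-type conclusion \ref{T2}.
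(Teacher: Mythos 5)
Your proof is correct and follows essentially the same route as the paper: the direction \ref{T2}~$\Rightarrow$~\ref{T1} tests the inequality on the \ref{eq:UCap} cutoff exactly as in Proposition~\ref{prop:Converse} (modulo the covering argument the paper uses to reach the topmost scale $r\approx R_0$, which you leave implicit), and the direction \ref{T1}~$\Rightarrow$~\ref{T2} is the same dyadic Haj{\l}asz--Koskela chaining of Proposition~\ref{prop:LocalUD}, invoking Lemmas~\ref{lemma3}, \ref{lemma:Patching}, and \ref{lemma:Lebesguept}. The one organizational difference is that you bound each telescope increment in $L^q(\nu,B_0)$ first and then sum via Minkowski with the geometric factor $2^{-i\delta}$, whereas the paper first applies a pointwise weighted H\"older inequality with weight $2^{i\varepsilon}$, $\varepsilon=q\delta/2$, and then integrates; these two orderings are equivalent and both hinge on $\delta>0$ and $q\geq p$ in the same way.
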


\begin{remark}
    We have prepared some examples to Theorem \ref{thm:SobolevPoincare} in Section \ref{Sec:Applications}. It might be helpful for the reader to consult these before proceeding.
\end{remark}

\begin{remark}
    The averaging constant in the left hand side of \ref{T2}, namely $f_{B(x,r)} = \mu(B(x,r))^{-1}\int_{B(x,r)} f d\mu$, is taken with respect to $\mu$ instead of $\nu$. This is an essential detail, for instance, in the proof of Proposition \ref{prop:CSandCE}.
\end{remark}

The inequality \ref{T2} is a Sobolev--Poincar\'e type inequality in a quite general form.
In Remarks \ref{rem:delta=0} and \ref{rem:dirac}, we show that Theorem \ref{thm:SobolevPoincare} is sharp in certain sense.
These kinds of inequalities are very common in many different settings of analysis; see \cite{saloff2002aspects} for the classical inequality, \cite{chanillowheeden85,Perez98,Perez2018DegeneratePI} for works in a weighted Euclidean settings and \cite[Introduction]{SobolevMetPoincare} and therein references for studies in more general contexts.
See also \cite{bakry1995sobolev} for related results on Dirichlet forms.

The condition \ref{T1} has also appeared on several occasions in the literature.
For instance, it arises in the Adams inequality; see \cite{adams1973trace,makalainen2009adams} and \cite[Section 1.4.1]{Mazya85}, and in trace embeddings into Besov spaces \cite{jonsson1980trace,jonsson1984function}.
It can sometimes be understood as an isoperimetric/isocapacitary type inequality for open balls \cite{KinnunenKorte08,Mazya85}, and it is closely related to the balance condition of Chanillo--Wheeden \cite{chanillowheeden85,kinnunenvahakangas2019}.

A recent work by J. Björn and Ka{\l}amajska \cite[Theorem 4.1]{BJORNKala} studied conditions similar to \ref{T1} and their connection to Sobolev--Poincar\'e inequalities which in some cases can be identified as \ref{T2}. However, their framework contains certain doubling assumptions on the measure $\nu$ whereas the present work relaxes this requirement; see \cite[Assumption (\textbf{D})]{BJORNKala}.
The weaker doubling assumptions are essential for our main result.
We also obtain a simple proof for the Morrey's inequality in Proposition \ref{prop:Morrey} by applying Theorem \ref{thm:SobolevPoincare} to the Dirac delta measures, which do not satisfy the doubling conditions in \cite{BJORNKala}.
On the other hand, under these doubling assumption, \cite{BJORNKala} obtains certain endpoint estimates which our theorem does not reach; see Section \ref{Sec:Applications} for further discussion.

Lastly, we mention that a quite similar condition was studied in the Dirichlet form setting in Barlow--Murugan \cite[Definition 4.1]{barlow2018stability} and in Barlow--Chen--Murugan \cite[Definition 6.2]{barlow2020stability}, although for a rather different purpose.

\subsection{Proof of Theorem \ref{thm:SobolevPoincare}; Part \textup{(I)}}
\label{subsec:MainProof}

The first implication 
\ref{T1} $\Rightarrow$ \ref{T2} is proven via a covering argument. The method is primarily inspired by Haj{\l}asz--Koskela \cite{SobolevMeetsPoincare,SobolevMetPoincare} but we work with relaxed doubling assumptions.

We first skim over the required tools.
For every $n \in \Z$ we fix a \emph{a maximal $2^{-n}$ separated subset} $V_n \subseteq X$, which means
\[
    d(v,w) \geq 2^{-n} \text{ for every pair of distinct } v,w\in V_n
\]
and
\[
   X = \bigcup_{v \in V_n} B(v,2^{-n}).
\]
Note that the sets $V_n$ are always at most countably infinite since $X$ is assumed to be separable.

We shall prove the first implication by deriving the following quantitative local Poincar\'e inequality. We do not need \ref{eq:UCap} here.

\begin{proposition}\label{prop:LocalUD}
    Let $1 \leq p \leq q$, $(\mathcal{F}_p,\Gamma_p,\Psi)$ be a $p$-energy structure and $\nu$ be a Borel measure of $X$.
    Assume that the pair $x_0 \in X$ and $R_0 \in (0,\diam(X))$ satisfies the following condition. There are constants $L_0\geq 1$ and $\delta > 0$ such that the inequality
    \begin{equation}\label{eq:LocalUD}
        \nu(B(x,r))^{\frac{1}{q}} \leq L_0 \left(\frac{r}{R_0}\right)^{\delta}\left(\frac{\mu(B(x,r))}{\Psi(x,r)}\right)^{\frac{1}{p}}
    \end{equation}
    holds for all $x \in X$ and $r > 0$ satisfying
    \[
        B(x,r) \cap B(x_0,R_0) \neq \emptyset \text{ and }0 < r \leq R_0.
    \]
    Then there are constants $C,\sigma \geq 1$ such that for all $f \in \mathcal{F}_p$,
    \[
        \left(\int_{B(x_0,R_0)} \abs{\tilde{f} - f_{B(x_0,R_0)}}^{q} \, d\nu \right)^{\frac{1}{q}} \leq C L_0 \left(\int_{B(x_0,\sigma R_0)} d\Gamma_p\Span{f} \right)^{\frac{1}{p}}.
    \]
    This result is quantitative in the sense that $C$ and $\sigma$ depend only on the constants in \ref{eq:PI}, \eqref{eq:muDoubling}, \eqref{eq:RadDiam}, the constant associated to $\Psi$ in \eqref{eq:Psi}, $\delta, \, p$ and $q$.
\end{proposition}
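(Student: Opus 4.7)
The plan is to establish the inequality via a chaining argument in the spirit of Haj\l asz and Koskela, using the decay factor $(r/R_0)^{\delta}$ in the hypothesis \eqref{eq:LocalUD} to make the dyadic contributions sum to a convergent geometric series in the $L^q(\nu)$ norm. I would begin by fixing $f \in \mathcal{F}_p$ and constructing, for each $x \in B(x_0, R_0)$, a shrinking chain of balls built from the maximal $2^{-n}$-separated sets $V_n$ introduced before the statement. Choose $n_0 \in \Z$ with $2^{-n_0} \sim R_0$; for every $n \geq n_0$ pick $v_n(x) \in V_n$ with $x \in B(v_n(x), 2^{-n})$ and set $B_n(x) := B(v_n(x), \lambda 2^{-n})$ with $\lambda \geq 1$ large enough that $B_{n+1}(x) \cap B_n(x) \neq \emptyset$ and $B_n(x) \subseteq B(x_0, C_0 R_0)$ for a constant $C_0$ independent of $n$ and $x$. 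Since the hypothesis \eqref{eq:LocalUD} is exactly \eqref{eq:Lebesgue1} of Lemma \ref{lemma:Lebesguept} with $\Theta(x,r) = L_0 (r/R_0)^{\delta}$, that lemma gives $\lim_n f_{B_n(x)} = \tilde{f}(x)$ for $\nu$-almost every $x \in B(x_0, R_0)$, which permits the telescoping identity
\begin{equation*}
    \tilde{f}(x) - f_{B(x_0, R_0)} = \bigl( f_{B_{n_0}(x)} - f_{B(x_0, R_0)} \bigr) + \sum_{n \geq n_0} \bigl( f_{B_{n+1}(x)} - f_{B_n(x)} \bigr).
\end{equation*}

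Next, I would apply Lemma \ref{lemma3} to each consecutive difference, bounding it by
\begin{equation*}
    a_n(x) := \left( \frac{\Psi(v_n(x), 2^{-n})}{\mu(B(v_n(x), 2^{-n}))} \int_{B(v_n(x), \sigma 2^{-n})} d\Gamma_p\langle f \rangle \right)^{1/p},
\end{equation*}
with $\sigma$ depending only on the admissible parameters. The central computation is the integration of $a_n^q$ against $\nu$, partitioning $B(x_0, R_0)$ according to the value of $v_n(x)$: writing $V_n'$ for the set of $v \in V_n$ with $B(v, 2^{-n}) \cap B(x_0, R_0) \neq \emptyset$, one gets
\begin{equation*}
    \int_{B(x_0, R_0)} a_n(x)^q \, d\nu(x) \leq \sum_{v \in V_n'} \nu(B(v, 2^{-n})) \left( \frac{\Psi(v, 2^{-n})}{\mu(B(v, 2^{-n}))} \right)^{q/p} \left( \int_{B(v, \sigma 2^{-n})} d\Gamma_p\langle f \rangle \right)^{q/p}.
\end{equation*}
Applying \eqref{eq:LocalUD} to bound each $\nu(B(v, 2^{-n}))$ makes the $\Psi/\mu$ ratios cancel identically. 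Since $q/p \geq 1$, convexity gives $\sum_v b_v^{q/p} \leq \bigl( \sum_v b_v \bigr)^{q/p}$ for nonnegative $b_v$, which together with the bounded-overlap estimate of Lemma \ref{lemma:Patching} applied to the enlarged balls $B(v, \sigma 2^{-n})$ yields
\begin{equation*}
    \int_{B(x_0, R_0)} a_n(x)^q \, d\nu(x) \lesssim L_0^q \left( \frac{2^{-n}}{R_0} \right)^{q\delta} \left( \int_{B(x_0, \sigma' R_0)} d\Gamma_p\langle f \rangle \right)^{q/p}.
\end{equation*}

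To conclude, I would apply Minkowski's inequality in $L^q(\nu)$ to the telescoping identity and sum the resulting series $\sum_{n \geq n_0} 2^{-(n-n_0)\delta}$, which converges because $\delta > 0$; the leading term $\lvert f_{B_{n_0}(x)} - f_{B(x_0, R_0)} \rvert$ is treated in exactly the same way as the $n_0$-th term via Lemma \ref{lemma3}. The main obstacle I expect is the careful bookkeeping forced by the \emph{localized} hypothesis \eqref{eq:LocalUD}: every ball appearing in the estimate must intersect $B(x_0, R_0)$ and have radius at most $R_0$, so the selection of $\lambda$, the cutoff level $n_0$, and the various dilation constants coming from Lemma \ref{lemma3} and the bounded-overlap lemma must all be tuned compatibly without exceeding $R_0$ at any scale. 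This is a purely combinatorial verification that should go through with the parameters tracked above, but it is the only place where genuine care is required beyond the standard Haj\l asz--Koskela machinery.
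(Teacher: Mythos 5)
Your proposal is correct and follows essentially the same chaining strategy as the paper: telescoping along a dyadic chain built from the maximal separated nets $V_n$, using Lemma~\ref{lemma:Lebesguept} for the $\nu$-a.e.\ convergence of the averages to $\tilde f$, Lemma~\ref{lemma3} for the consecutive-average estimates, the hypothesis~\eqref{eq:LocalUD} to cancel the $\mu/\Psi$ ratios, the convexity inequality $\sum_v b_v^{q/p}\le(\sum_v b_v)^{q/p}$, and Lemma~\ref{lemma:Patching} for the bounded overlap. The one technical deviation is that you invoke Minkowski's inequality in $L^q(\nu)$ to pass the telescoping sum outside the norm, rather than the pointwise weighted H\"older argument (the $\varepsilon$-trick with $\varepsilon=q\delta/2$) of~\eqref{eq:MT2}; since each term $\lVert a_n\rVert_{L^q(\nu)}$ carries a geometric factor $(2^{-n}/R_0)^\delta$ with $\delta>0$, Minkowski is sufficient here, and it has the mild advantage of treating the cases $q=1$ and $q>1$ uniformly.
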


\begin{remark}\label{rem:delta=0}
    The constant $C$ in Proposition \ref{prop:LocalUD} we obtain in the proof blows up to $\infty$ as $\delta \to 0^+$ because of the estimate \eqref{eq:MT2}. Moreover, Proposition \ref{prop:LocalUD} in general is false if $\delta = 0$; see Remark \ref{rem:dirac}. We also note that the proof for the converse direction in Proposition \ref{prop:Converse} works even with $\delta= 0$. Lastly, we note that Proposition \ref{prop:LocalUD} sometimes holds even for $\delta = 0$; see \cite[Theorem 4.1]{BJORNKala}.
\end{remark}

\begin{remark}
    A standard approach for obtaining Sobolev type inequalities in the literature is to first derive analogous weak type estimates and then convert them to the desired strong type via the \emph{truncation technique} of Maz'ya \cite{Mazya85}; we refer to \cite{SobolevMetPoincare,bakry1995sobolev} for details. In the following proof, we obtain the strong inequality directly without passing through the weak type estimates.
\end{remark}

\begin{proof}[Proof of Proposition \ref{prop:LocalUD}]    
    We shall do some preparations first.
    Let $m_0 \in \Z$ be the smallest integer such that $2^{-m_0} \leq R_0 \leq 2^{-m_0 + 1}$.
    For every $i \in \N \cup \{0\}$ we define the subset $W_{m_0+i} \subseteq V_{m_0+i}$ that consists of the points $v \in V_{m_0+i}$ such that $B(v,2^{-(m_0+i)}) \cap B(x_0,R_0) \neq \emptyset$.
    Lastly, every $x \in B(x_0,R_0)$ is assigned a sequence of open balls $\{B_{i,x}\}_{i = 1}^{\infty}$ given by 
    \[
        B_{i,x} := 
        \begin{cases}
            B(x_0,R_0) & \text{ if } i = 0\\
            B(x,2^{-m_0-i}) & \text{ if } i > 0.
        \end{cases}
    \]

    We now begin the actual proof.
    By Lemma \ref{lemma:Lebesguept} and the definition of $\tilde{f}$ in \eqref{eq:Exact},
    \[
        \lim_{i \to \infty} \abs{\tilde{f}(x) - f_{B_{i,x}}} = 0 \text{ for $\nu$-almost every } x \in X.
    \]
    Thus, for the rest of the proof, we only consider points $x \in X$ for which the previous equality holds.
    A telescoping argument now yields
    \begin{equation}\label{eq:MT1}
        \abs{\tilde{f}(x) - f_{B(x_0,R_0)}} \leq  \sum_{i = 0}^{\infty} \abs{f_{B_{i,x}} - f_{B_{i+1,x}}}.
    \end{equation}
    
    Next, we set $\varepsilon := q\delta/2$. 
    If $q > 1$, we proceed by applying a weighted Hölder's inequality,
    \begin{align}
        \label{eq:MT2}
        & \, \quad \abs{\tilde{f}(x) - f_{B(x_0,R_0)}}^{q} \\
        \leq & \quad \left(\sum_{i = 0}^{\infty} \abs{f_{B_{i,x}} - f_{B_{i+1,x}}}2^{\frac{i \varepsilon}{q}} 2^{\frac{-i \varepsilon}{q}} \right)^{q} && (\text{Ineq. \eqref{eq:MT1}}) \nonumber
        \\
        \leq & \quad \left(\sum_{i = 0}^{\infty} \left(2^{\frac{-\varepsilon}{q-1}} \right)^{i}\right)^{q-1}\sum_{i = 0}^{\infty}\abs{f_{B_{i,x}} - f_{B_{i+1,x}}}^{q} 2^{i \varepsilon} && (\text{Hölder's ineq.}) \nonumber
        \\
        \lesssim & \quad \sum_{i = 0}^{\infty}\abs{f_{B_{i,x}} - f_{B_{i+1,x}}}^{q} 2^{i \varepsilon}. && (\varepsilon > 0)
        \nonumber
    \end{align}
    If $q = 1$, then in \eqref{eq:MT2} the first row is trivially less than the last row due to \eqref{eq:MT1}.
    
    The next step is to estimate the differences of the averages using the Poincar\'e inequality.
    Let $w \in W_{m_0 + i}$ such that $x \in B(w,2^{-(m_0 + i)})$. Note that the balls $B_{i,x},B_{i+1,x}$ and $B(w,2^{-(m_0 + i)})$ all have comparable radii and all contain $x$. By Lemma \ref{lemma3}, there is $\sigma_0 \geq 1$ independent of $x, \, w$ and $i$ such that
    \begin{align*}
        \abs{f_{B_{i,x}} - f_{B_{i+1,x}}}^{q} & \lesssim \abs{f_{B(w,2^{-(m_0+i)})} - f_{B_{i,x}}}^q + \abs{f_{B(w,2^{-(m_0 + i)})} - f_{B_{i+1,x}}}^q\\
        & \lesssim \left(\frac{\Psi(w,2^{-(m_0+i)}) }{\mu(B(w,2^{-(m_0+i)}))}
        \int_{B(w,\sigma_0 2^{-(m_0+i)}))} \, d\Gamma_p\Span{f} \right)^{\frac{q}{p}}. \nonumber
    \end{align*}
    We shall denote $B_{i,w} := B(w,2^{-(m_0+i)}))$ and $\sigma_0 B_{i,w} := B(w,\sigma_0 2^{-(m_0+i)}))$ to shorten the notation.
    By combining the previous two displays, we get for $\nu$-almost every $x \in X$,
    \begin{align*}
        & \abs{\tilde{f}(x) - f_{B(x_0,R_0)}}^q  \lesssim \sum_{i = 0}^\infty  \sum_{w \in W_{m_0+i}} \left(\frac{\Psi(w,2^{-(m_0+i)}) }{\mu(B_{i,w})}
        \int_{\sigma_0 B_{i,w}} \, d\Gamma_p\Span{f} \right)^{\frac{q}{p}} 2^{i\varepsilon}\mathds{1}_{B_{i,w}}(x).
    \end{align*}
    
    Now, we integrate over the set $B(x_0,R_0)$,
    \begin{align*}
        & \, \quad
        \int_{B(x_0,R_0)}\abs{\tilde{f} - f_{B(x_0,R_0)}}^{q} \,d\nu \\
        \lesssim & \quad \sum_{i = 0}^{\infty}\sum_{w \in W_{m_0 + i}} \int_{B(x_0,R_0)} \left(\frac{\Psi(w,2^{-(m_0+i)})}{\mu(B_{i,w})}\right)^{\frac{q}{p}} \left(
        \int_{B_{i,w}} \, d\Gamma_p\Span{f} \right)^{\frac{q}{p}} 2^{i\varepsilon}\mathds{1}_{B_{i,w}} \, d\nu \\
        \leq & \quad \sum_{i = 0}^{\infty} \sum_{w \in W_{m_0 + i}} \nu(B_{i,w})\left(\frac{\Psi(w,2^{-(m_0+i)})}{\mu(B_{i,w})}\right)^{\frac{q}{p}} \left(
        \int_{\sigma_0 B_{i,w}} \, d\Gamma_p\Span{f} \right)^{\frac{q}{p}} 2^{i\varepsilon},
    \end{align*}
    Since each $B_{i,w}$ has a radius $2^{-(m_0+i)}$ and $2^{-m_0} \leq R_0$, we can apply \eqref{eq:LocalUD} to the last row in the previous display. We obtain
\begin{equation}\label{eq:integrate}
    \int_{B(x_0,R_0)}\abs{\tilde{f} - f_{B(x_0,R_0)}}^{q} \,d\nu
        \lesssim L_0^q\sum_{i = 0}^{\infty} \sum_{w \in W_{m_0 + i}} 2^{-i(q\delta - \varepsilon)} \left(
        \int_{\sigma_0 B_{i,w}} \, d\Gamma_p\Span{f} \right)^{\frac{q}{p}}.
    \end{equation}
    
Note that $\sigma_0 B_{i,w} \subseteq B(x_0,3\sigma_0 R_0)$. Also recall that the balls $B_{i,w}$ for $w \in W_{m_0 + i}$ are centered at points that are $2^{-(m_0+i)}$-separated.
By combining these facts with Lemma \ref{lemma:Patching} and the inequality $a^{q/p} + b^{q/p} \leq (a+b)^{q/p}$ where $a,b \geq 0$,
    \begin{align}\label{eq:UsePatching}
         \sum_{w \in W_{m_0 + i}}  \left(
        \int_{\sigma_0 B_{i,w}} \, d\Gamma_p\Span{f} \right)^{\frac{q}{p}}  & \leq \left(\sum_{w \in W_{m_0 + i}}\int_{\sigma_0 B_{i,w}} \, d\Gamma_p\Span{f} \right)^{\frac{q}{p}}\\
        & \lesssim \left(\int_{B(x_0,3\sigma_0 R_0 )} \, d\Gamma_p\Span{f}\right)^{\frac{q}{p}}\nonumber
    \end{align}
We note that $q \geq p$ is needed here.
    
    We are finally ready to finish the proof. Since we had chosen $\varepsilon = q\delta/2$, the combination of the inequalities \eqref{eq:integrate} and \eqref{eq:UsePatching} yields
    \begin{align*}
        \int_{B(x_0,R_0)}\abs{\tilde{f} - f_{B(x_0,R_0)}}^{q} \,d\nu & \lesssim L_0^q \sum_{i = 0}^{\infty} \sum_{w \in W_{m_0 + i}} 2^{-i( q\delta - \varepsilon)} \left(\int_{\sigma_0 B_{i,w}} \, d\Gamma_p\Span{f} \right)^{\frac{q}{p}} \\
        & = L_0^q \sum_{i = 0}^{\infty} 2^{-iq\frac{\delta}{2}} \sum_{w \in W_{m_0 + i}}\left(\int_{\sigma_0 B_{i,w}} \, d\Gamma_p\Span{f} \right)^{\frac{q}{p}} \\
        & \lesssim L_0^q \left(\int_{B(x_0,3\sigma_0 R_0)} \, d\Gamma_p\Span{f} \right)^{\frac{q}{p}}
    \end{align*}
    The proof is completed after raising the inequalities in the previous display to the power $1/q$.
\end{proof}

\begin{remark}\label{rem:SobolevMeetsPoincare}
The proof of Proposition \ref{prop:LocalUD} has a slightly hidden similarity with the technique employed in the note of Haj{\l}asz and Koskela \cite{SobolevMeetsPoincare}, even though on the level of details there are quite major differences.
Their proof is based on a very simple observation that we describe here.  Let $f \in L^p(X,\mu)$ and fix $x_0 \in X$ and $R_0>0$. Fix a constant $s > 0$ and choose any point $z \in \{ y \in B(x_0,R_0) : \abs{f(y) - f_{B(x_0,R_0)}} > s \}$ be a Lebesgue point of $f$.
By the telescoping argument in \eqref{eq:MT1},
\[
    s < \abs{f(z) - f_{B(x_0,R_0)}} \leq \sum_{i = 0}^{\infty} \abs{f_{B_{i}} - f_{B_{i+1}}}
\]
where $B_i := B(z,2^{-i}r)$. Now fix a parameter $\varepsilon > 0$.
By modifying the left hand side of the previous inequality, we see that
\[
    \sum_{i = 0}^\infty s 2^{-i\varepsilon} < \sum_{i = 0}^{\infty} C(\varepsilon) \abs{f_{B_{i}} - f_{B_{i+1}}}.
\]
Since we have an inequality between two infinite series, there in particular has to be at least one index $i$ such that we have an inequality between the $i$-th terms. By rewriting, we arrive at
\[
s < C(\varepsilon) \abs{f_{B_{i}} - f_{B_{i+1}}}2^{i\varepsilon}.
\]
Recall that this is quite similar to what we had in \eqref{eq:MT2} after applying the weighted Hölder's inequality.
Similar concept seems to also play a role in the \emph{$SD_p^s(w)$ weight condition} introduced in \cite[Definition 1.4]{Perez2018DegeneratePI}. In particular, the computations in \cite[Pages 6106-6107]{Perez2018DegeneratePI} seems to suggest this. See also the \emph{bumbed balance condition} introduced in \cite{kinnunenvahakangas2019}, and Section \ref{Sec:Applications} for further discussion.
\end{remark}

\begin{remark}\label{rem:dirac}
    The modified statement of Proposition \ref{prop:LocalUD}, where instead of $\delta > 0$ we have $\delta = 0$, is false in general. This in particular shows that Theorem \ref{thm:SobolevPoincare} is sharp in some sense.
    Consider the classical setting $W^{1,n}(\R^n)$ and let $\nu := \delta_0$ be the Dirac delta measure concentrated at the origin. Since $\Psi(x,r) = r^n \approx \abs{B(x,r)}$, where $\abs{A}$ denotes the Lebesgue measure, $\nu$ satisfies \eqref{eq:LocalUD} for $\delta = 0$. However, the Poincar\'e inequality in the claim of Proposition \ref{prop:LocalUD} where $x_0 = 0$ and $R_0 = 1$, fails. Even the weak type estimate
    \[
    \nu(\{ x \in B(0,1) : \abs{\tilde{f}(x) - f_{B(x_0,R_0)}} > s \}) \leq \frac{C}{s^q} \left( \int_{B(0,\sigma)} \abs{\nabla f}^p \,dx \right)^{\frac{q}{p}},
    \]
    which would be implied by the strong type counterpart and Chebyshev's inequality, is false. This can be seen from the fact that the precise representative $\tilde{f}$ of a Sobolev function in $f \in W^{1,n}(\R^n)$ might satisfy $\tilde{f}(0)=\infty$.
\end{remark}

We shall finish this subsection by proving the first implication of Theorem \ref{thm:SobolevPoincare}.

\begin{proof}[Proof of Theorem \ref{thm:SobolevPoincare}: \ref{T1} $\Rightarrow$ \ref{T2}]
    Fix $x \in X$ and $r \in (0,\diam(X))$.
    Let $y \in X$ and $s > 0$ be another pair such that
    \[
        B(y,s) \cap B(x,r) \neq \emptyset \text{ with } 0 < s \leq r.
    \]
    It follows from the condition \ref{T1} and the doubling property $\Theta$ \eqref{eq:Psi},
    \[
        \nu(B(x,s))^{\frac{1}{q}} \leq K\Theta(x,s)\left( \frac{\mu(B(x,s))}{\Psi(x,s)} \right)^{\frac{1}{p}} \leq C\Theta(x,r) \left( \frac{s}{r} \right)^\delta \left( \frac{\mu(B(x,s))}{\Psi(x,s)} \right)^{\frac{1}{p}}.
    \]
    By setting $L_0 := C\Theta(x,r)$ for a suitable $C \geq 1$, $x_0 := x$ and $R_0 := r$, the inequality \ref{T2} now follows from Proposition \ref{prop:LocalUD}.
\end{proof}

\subsection{Proof of Theorem \ref{thm:SobolevPoincare}; Part \textup{(II)}}
\label{subsec:converse}
Next, we cover the converse implication \ref{T2} $\Rightarrow$ \ref{T1}. This direction is much easier than the other.
Again, we shall verify a quantitative local estimate, and here we do not use \ref{eq:PI}.

\begin{proposition}\label{prop:Converse}
    Let $1 \leq p \leq q$, $(\mathcal{F}_p,\Gamma_p,\Psi)$ be a $p$-energy structure and $\nu$ be a Borel measure on $X$. Assume that the pair $x_0 \in X$ and $R_0 \in (0,\diam(X))$ satisfies the following condition. There are constants $L_0,\sigma \geq 1$ such that, whenever $x \in X$ and $r > 0$ with
    \begin{equation*}
        B(x,r) \cap B(x_0,R_0) \neq \emptyset \text{ and } 0 < r \leq R_0,
    \end{equation*}
    it holds for every $f \in \mathcal{F}_p$,
    \begin{equation}\label{eq:ConverseSPI}
    \left(\int_{B(x,r)} \abs{\tilde{f} - f_{B(x,r)}}^{q} \, d\nu \right)^{\frac{1}{q}} \leq L_0 \left(\int_{B(x,\sigma r)} d\Gamma_p\Span{f} \right)^{\frac{1}{p}}.
    \end{equation}
    Then there is a constant $C \geq 1$ such that
    \begin{equation}\label{eq:ConverseUD}
        \nu(B(x_0,R_0))^{\frac{1}{q}} \leq C L_0 \left( \frac{\mu(B(x_0,R_0))}{\Psi(x_0,R_0)} \right)^{\frac{1}{p}}.
    \end{equation}
    This result is quantitative in the sense that $C$ depends only on the constants in \ref{eq:UCap}, \eqref{eq:muDoubling}, \eqref{eq:RadDiam}, the constants associated to $\Psi$ in \eqref{eq:Psi}, $\sigma,\, p$ and $q$.
\end{proposition}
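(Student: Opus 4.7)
My plan is to mimic the heuristic calculation sketched after Example \ref{ex:EuclideanUcap} in the $\nu$-setting: apply the assumed local Sobolev--Poincar\'e inequality \eqref{eq:ConverseSPI} to cutoff functions produced by \ref{eq:UCap}. The main obstacle will be the constraint that the integration ball in \eqref{eq:ConverseSPI} has radius at most $R_0$. Indeed, a cutoff that equals $1$ on all of $B(x_0,R_0)$ has $\mu$-average equal to $1$ on $B(x_0,R_0)$ itself, which kills the left-hand side of \eqref{eq:ConverseSPI}; while a cutoff supported strictly inside $B(x_0,R_0)$ gives only a bound on $\nu$ at a smaller scale, and since $\nu$ is not assumed to be doubling this cannot be transferred back to $\nu(B(x_0,R_0))$. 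My workaround is to replace the single cutoff by a bounded covering of $B(x_0,R_0)$ by small balls and to run the cutoff argument on each of them.

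For the setup, I would first use the reverse doubling inequality \eqref{eq:RevDoubling} (available because $R_0<\diam(X)$) to choose a constant $c\in(0,\kappa^{-1})$, depending only on the $\mu$-doubling constant, the lower exponent $\alpha_L$, and $\kappa$ from \ref{eq:UCap}, so that $\mu(B(z,\kappa c R_0))\leq \tfrac{1}{2}\mu(B(z,R_0))$ for every $z\in B(x_0,R_0)$. Setting $r_0:=cR_0$ and taking a maximal $r_0$-separated subset $\{z_i\}_{i\in I}\subseteq B(x_0,R_0)$, the metric doubling property \eqref{eq:MD} bounds $|I|$ by a constant depending only on the doubling constants and $c$, while $B(x_0,R_0)\subseteq \bigcup_{i\in I}B(z_i,r_0)$. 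For each $i$, \ref{eq:UCap} supplies a cutoff $\varphi_i$ for $B(z_i,r_0)\subseteq B(z_i,\kappa r_0)$ with $\Gamma_p\langle\varphi_i\rangle(X)\leq C\,\mu(B(z_i,r_0))/\Psi(z_i,r_0)\lesssim \mu(B(x_0,R_0))/\Psi(x_0,R_0)$, the last estimate by the doubling of $\mu$ and of $\Psi$.

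Next, I would apply \eqref{eq:ConverseSPI} with $f=\varphi_i$ on the ball $B(z_i,R_0)$, which has radius exactly $R_0$ and meets $B(x_0,R_0)$ because $z_i\in B(x_0,R_0)$. By continuity of $\varphi_i$, $\widetilde{\varphi_i}\equiv 1$ on $B(z_i,r_0)$, while the choice of $c$ gives $(\varphi_i)_{B(z_i,R_0)}\leq \mu(B(z_i,\kappa r_0))/\mu(B(z_i,R_0))\leq \tfrac{1}{2}$. Consequently, the integrand on the left of \eqref{eq:ConverseSPI} is at least $(1/2)^q$ on $B(z_i,r_0)$, and combining with the energy estimate above yields $\nu(B(z_i,r_0))^{1/q}\lesssim L_0\bigl(\mu(B(x_0,R_0))/\Psi(x_0,R_0)\bigr)^{1/p}$. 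Summing the $q$-th powers over $i\in I$ and using $|I|\lesssim 1$ produces $\nu(B(x_0,R_0))\leq \sum_{i\in I}\nu(B(z_i,r_0))\lesssim L_0^q\bigl(\mu(B(x_0,R_0))/\Psi(x_0,R_0)\bigr)^{q/p}$, and taking $q$-th roots gives the desired bound \eqref{eq:ConverseUD}.
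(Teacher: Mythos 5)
Your proof is correct and takes essentially the same approach as the paper: cover $B(x_0,R_0)$ by boundedly many small balls, produce cutoffs from \ref{eq:UCap} on each, observe that at a sufficiently large scale each cutoff's $\mu$-average is at most $1/2$ (which you correctly attribute to reverse doubling \eqref{eq:RevDoubling}), feed this into \eqref{eq:ConverseSPI}, and sum. The only cosmetic difference is ordering: the paper first proves the estimate for generic small balls $B(x,r)$ with $\tau r\le R_0$ and then covers, whereas you fix the covering first and apply \eqref{eq:ConverseSPI} directly at scale $R_0$; the substance is the same.
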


\begin{proof}
It follows from the doubling property of $\mu$ \eqref{eq:muDoubling} that there is a constant $\tau \geq \kappa$, where $\kappa$ is as in \ref{eq:UCap}, depending only on the doubling constant of $\mu$ and $\kappa$ satisfying the following condition. Whenever $x \in X$ and $r \in (0,R_0/\tau)$,
\begin{equation}\label{eq:sigmaDoubling}
    \frac{\mu(B(x,\kappa r))}{\mu(B(x,\tau r))} \leq \frac{1}{2}.
\end{equation}

Now, fix $x \in X$, $r \in (0,R_0/\tau)$ and let $\varphi \in \mathcal{F}_p$ be a cutoff function for $B(x,r) \subseteq B(x,\kappa r)$ provided by \ref{eq:UCap}.
Note that $\tilde{\varphi} = \varphi$ because $\varphi$ is continuous.
It follows from \eqref{eq:sigmaDoubling} and the definition cutoff functions that for all $y \in B(x,r)$,
\begin{equation*}
    \varphi(y) - \varphi_{B(x,\tau r)} = 1 - \mu(B(x,\tau r))^{-1}\int_{B(x,\kappa r)} \varphi \, d\mu \geq 1 - \frac{\mu(B(x,\kappa r))}{\mu(B(x,\tau r))} \geq \frac{1}{2}.
\end{equation*}
Since $\tau r \leq R_0$, we can use \eqref{eq:ConverseSPI} to get
\begin{align}\label{eq:EstimateSmallR}
    \nu(B(x,r))^{\frac{1}{q}} & \lesssim \left(\int_{B(x,\tau r)} \abs{\varphi - \varphi_{B(x,\tau r)}}^q \, d\nu\right)^{\frac{1}{q}}\\
    & \leq L_0 \Gamma_p\Span{\varphi}(X)^{\frac{1}{p}} \lesssim L_0 \left( \frac{\mu(B(x,r))}{\Psi(x,r)} \right)^{\frac{1}{p}}. \nonumber
\end{align}

Observe that the objective of the current proof is to obtain inequality in the previous display for $x = x_0$ and $r = R_0$, but currently we know this only when $r \in (0,R_0/\tau)$.
This is easily resolved by a covering argument.
Indeed, we take a covering $\{B(x_i,s)\}_{i \in I}$ of $B(x_0,R_0)$ such that $x_i \in B(x_0,R_0)$, $s := R_0/(2\tau)$ and  $d(x_i,x_j) \geq s$ for all distinct pairs $i,j \in I$. Then it follows from \eqref{eq:EstimateSmallR} and the doubling properties of $\mu$ and $\Psi$,
\[
    \nu(B(x_0,R_0))^{\frac{1}{q}} \leq \left( \sum_{i \in I} \nu(B(x_i,s)) \right)^{\frac{1}{q}} \lesssim \abs{I}^{\frac{1}{q}} L_0  \left( \frac{\mu(B(x_0,R_0))}{\Psi(x_0,R_0)} \right)^{\frac{1}{p}}.
\]
Here $\abs{I}$ is the cardinality of $I$.
By the metric doubling property of $X$, $\abs{I}$ is bounded above by a constant depending only on $N$ in \eqref{eq:MD} and $\tau$. This completes the proof.
\end{proof}

\begin{proof}[Proof of Theorem \ref{thm:SobolevPoincare}: \ref{T2} $\Rightarrow$ \ref{T1}]
    Fix $x \in X$ and $r \in (0,\diam(X))$.
    Let $y \in X$ and $s > 0$ be another pair such that
    \[
        B(y,s) \cap B(x,r) \neq \emptyset \text{ with } 0 < s \leq r.
    \]
    By the doubling property of  $\Theta$ \eqref{eq:Psi},
    \[
        \Theta(y,s) \lesssim \left( \frac{s}{r} \right)^\delta \Theta(x,r) \leq \Theta(x,r).
    \]
    By setting $L_0 := C \Theta(x,r)$ for suitable $C \geq 1$, by \ref{T2} we have for all $f \in \mathcal{F}_p$,
    \[
        \left(\int_{B(y,s)} \abs{\tilde{f} - f_{B(y,s)}}^{q} \, d\nu \right)^{\frac{1}{q}} \leq L_0 \left(\int_{B(y,\sigma s)} d\Gamma_p\Span{f} \right)^{\frac{1}{p}}.
    \]
    The inequality \ref{T1} now follows from Proposition \ref{prop:Converse} by choosing the parameters $x_0 := x$ and $R_0 := r$.
\end{proof}

\section{Dirichlet forms and heat kernel estimates}\label{sec:Dir}
The objective of this section is to prove Theorem \ref{thm:SUPER}. We also present a version of the result that applies to settings where the ambient space is not necessarily geodesic in Theorem \ref{thm:main}.

We impose the same assumption for the ambient space as in the previous section. Namely, $X = (X,d,\mu)$ is a metric measure space where $(X,d)$ is a complete, locally compact, and separable metric space. We assume that $\mu$ is a Radon measure and that it satisfies the doubling property \eqref{eq:muDoubling}. Lastly, we assume that $(X,d)$ is uniformly perfect, meaning \eqref{eq:RadDiam} holds.

\subsection{Definitions}
First, we fix the terminology on Dirichlet forms and refer to \cite{FOT,chen2012symmetric} for general background.

\begin{definition}\label{def:Dir}
We say that $(\mathcal{E},\mathcal{F})$ is a \emph{Dirichlet form} on $L^2(X,\mu)$ if the following two conditions hold.
    \begin{enumerate}
        \vspace{3pt}
        \item[(1)] $\mathcal{E}: \mathcal{F} \times \mathcal{F} \to \R$ is a symmetric non-negative definite bilinear form such that $\mathcal{F} \subseteq L^2(X,\mu)$ is a dense linear subspace, and $\mathcal{F}$ equipped with the inner product $\mathcal{E}_1(f,g) := \mathcal{E}(f,g) + \int_X f \cdot g \, d\mu$ is a Hilbert space.
        \vspace{3pt}
        \item[(2)] For all $f \in \mathcal{F}$ we have $f^+ \land 1 \in \mathcal{F}$ and $\mathcal{E}(f^+ \land 1,f^+ \land 1) \leq \mathcal{E}(f,f)$. This condition is called the \emph{Markov property}.
    \end{enumerate}
    We consider two additional conditions.
    \begin{enumerate}
        \vspace{3pt}
        \item[(3)] We say that a Dirichlet form $(\mathcal{E},\mathcal{F})$ on $L^2(X,\mu)$ is \emph{regular} if the subspace $\mathcal{F} \cap C_c(X)$ is dense in both the Hilbert space $(\mathcal{F},\mathcal{E}_1)$ and the normed space $(C_c(X),\norm{\cdot}_{L^\infty})$.
        \vspace{3pt}
        \item[(4)] We say that a Dirichlet form $(\mathcal{E},\mathcal{F})$ on $L^2(X,\mu)$ is \emph{strongly local} if the following implication always holds. Whenever $f,g \in \mathcal{F}$ such that their supports $\supp_\mu[f], \supp_\mu[g] \subseteq X$ are compact and there is $a \in \R$ such that $\supp_{\mu}[f] \cap \supp_{\mu}[g - a \mathds{1}_X] = \emptyset$, we have $\mathcal{E}(f,g) = 0$.
        Here $\mathds{1}_X$ denotes the constant function $x \mapsto 1$, and $\supp_{\mu}[f]$ is the smallest closed set $F \subseteq X$ with $\int_{X\setminus F} f d\mu = 0$.
    \end{enumerate}
    Given a strongly local regular Dirichlet form $(\mathcal{E},\mathcal{F})$ on $L^2(X,\mu)$, every $f \in \mathcal{F}$ is assigned the associated \emph{energy measure} $\Gamma\Span{f}$ as follows; see \cite[Chapter 3]{FOT} for details.
    If $f \in \mathcal{F}\cap L^\infty(X,\mu)$, then $\Gamma\Span{f}$ is the unique non-negative Radon measure on $(X,d)$ satisfying
    \[
        \int_X \varphi \, d\Gamma\Span{f} = \mathcal{E}(f,f\varphi) - \frac{1}{2}\mathcal{E}(f^2,\varphi) \quad \text{for all } \varphi \in \mathcal{F} \cap C_c(X).
    \]
    For a general $f \in \mathcal{F}$, we now define $\Gamma\Span{f}(A) = \lim_{k \to \infty} \Gamma\Span{(f \lor -k) \land k}(A)$.
\end{definition}

\begin{definition}
We say that $\Psi : (0,\infty) \to (0,\infty)$ is a \emph{radial scale function} if it is an increasing homeomorphism satisfying the following doubling type property for some $\beta_U,\beta_L > 1$. There is $C \geq 1$ such that for all $0 < r \leq R$,\begin{equation}\label{eq:PsiRad}C^{-1}\left(\frac{R}{r}\right)^{\beta_L} \leq \frac{\Psi(R)}{\Psi(r)} \leq C\left(\frac{R}{r}\right)^{\beta_U}.\end{equation}Given such $\Psi$, we associate it a function $\Phi$ given by\begin{equation}\label{eq:PhiRad}\Phi(s) := \sup_{r > 0} \left(\frac{s}{r} - \frac{1}{\Psi(r)}\right).\end{equation}\end{definition}

Throughout the rest of the section, we consider a fixed strongly local regular Dirichlet form $(\mathcal{E},\mathcal{F})$ on $L^2(X,\mu)$, a fixed radial scale function $\Psi$, and $\Phi$ is as in \eqref{eq:PhiRad}.

We consider the following more general version of sub-Gaussian heat kernel estimates than discussed in Introduction. 
Most notably, for the lower bounds of the heat kernel, we only require the so called \emph{near diagonal estimates}.
If the ambient space is geodesic, then a chaining argument can be used to establish the full lower bound from the near diagonal variant;
see \cite[Page 1217]{grigor2012two} for further discussion on these differences.

\begin{definition}\label{def:heatkernel}
    Let $(\mathcal{E},\mathcal{F})$ be a strongly local regular Dirichlet form on $L^2(X,\mu)$ and $\{P_t\}_{t > 0}$ be the associated Markov semigroup; see \cite[Section 1.4]{FOT}.
    A family of Borel measurable function $\{p_t\}_{t > 0}$, $p_t : X \times X \to [0,\infty]$, is a \emph{heat kernel} of $(\mathcal{E},\mathcal{F})$ if for every $t > 0$ the function $p_t$ is an integral kernel of $P_t$, meaning for all $t > 0$ and $f \in L^2(X,\mu)$,
    \[
    P_t(f)(x) = \int_X p_t(x,y)f(y)\, d\mu \quad \text{for $\mu$-almost every $x \in X$.}
    \]
    We say that $(\mathcal{E},\mathcal{F})$ satisfies the \emph{heat kernel estimates} \ref{eq:HKMain} if there is a heat kernel $\{p_t\}_{t > 0}$ of $(\mathcal{E},\mathcal{F})$ and constants $C,C_1,C_2,c,\kappa > 0$ such that for all $t > 0$,
    \begin{align}\label{eq:HKMain}
    \tag{$\textup{HKE}(\Psi)$}
    p_t(x,y) & \leq \frac{C}{\mu(B(x,\Psi^{-1}(t))}\exp\left( -C_1t\Phi \left(C_2\frac{d(x,y)}{t}\right) \right) \text{ for $\mu$-almost every $x,y \in X$}, \\
    p_t(x,y) & \geq \frac{c}{\mu(B(x,\Psi^{-1}(t)))} \text{ for $\mu$-almost every $x,y \in X$ with $d(x,y) \leq \kappa \Psi^{-1}(t)$}. \nonumber
\end{align}
    If $\Psi(r) = r^\beta$ for some $\beta \geq 2$, we write \hyperref[eq:HKMain]{$\textup{HKE}(\beta)$} instead.
\end{definition}

\begin{definition}\label{def:PI(Dir)}
    We say that $(\mathcal{E},\mathcal{F})$ satisfies the \emph{Poincar\'e inequality} \ref{eq:PI(Dir)} if there are constants $C,\sigma \geq 1$ satisfying the following condition.
    For all $x \in X$, $r > 0$ and $f \in \mathcal{F}$,
    \begin{equation}\label{eq:PI(Dir)}
        \tag{$\textup{PI}(\Psi)$}
        \int_{B(x,r)} (f - f_{B(x,r)})^2 \, d\mu \leq C \Psi(r) \int_{B(x,\sigma r)} \, d\Gamma\Span{f}.
    \end{equation}
    We also say that $(\mathcal{E},\mathcal{F})$ satisfies the \emph{upper capacity estimate} \ref{eq:Ucap(Dir)} if there is a constant $C \geq 1$ satisfying the following condition. For all $x \in X$ and $r > 0$ there is a cutoff function $\varphi \in \mathcal{F}$ for $B(x,r) \subseteq B(x,2r)$ such that
    \begin{equation}\label{eq:Ucap(Dir)}
        \tag{$\textup{Cap}_\leq(\Psi)$}
        \mathcal{E}(\varphi) \leq C \frac{\mu(B(x,r))}{\Psi(r)}.
    \end{equation}
    If $\Psi(r) = r^\beta$ for some $\beta \geq 2$, we write \hyperref[eq:PI(Dir)]{$\textup{PI}(\beta)$} and \hyperref[eq:Ucap(Dir)]{$\textup{Cap}_\leq(\beta)$} instead.
\end{definition}

The following definition is a slightly modified variant of the cutoff Sobolev inequality introduced in \cite{barlow2004stability,barlow2006stability}.
The main difference is that we have removed the Hölder regularity condition because, as we prove in Corollary \ref{cor:holder}, it actually follows from the other conditions we consider.
It has been also removed from some later variants \cite{andres2015energy}.

\begin{definition}\label{def:CS(Dir)}
    For a given $\delta > 0$, we say that $(\mathcal{E},\mathcal{F})$ satisfies the \emph{cutoff Sobolev inequality} \ref{eq:CS(Dir)} if there is a constant $C \geq 1$ for which the following holds.
    For every $x \in X$ and $R \in (0,\diam(X))$ there is a cutoff function $\xi \in \mathcal{F}_p$ for $B(x,R) \subseteq B(x,2R)$ such that
    \begin{equation}\label{eq:CS(Dir)}
    \tag{$\textup{CS}_{\delta}(\Psi)$}
    \int_{B(y,r)} f_{qc}^2 \, d\Gamma\Span{\xi} \leq C \left(\frac{r}{R}\right)^\delta \left( \int_{B(y,2r)} \, d\Gamma\Span{f} + \frac{1}{\Psi(r)} \int_{B(y,2r)} f^2 \, d\mu \right). \end{equation}
    for all $y \in X$, $0 < r \leq 3R$ and $f \in \mathcal{F}$.
    Here $f_{qc}$ is any quasicontinuous $\mu$-representative of $f$; see \cite[Chapter 2]{FOT} for a detailed exposition on quasicontinuity.
    If $\Psi(r) = r^\beta$ for some $\beta \geq 2$, we write \hyperref[eq:CS(Dir)]{$\textup{CS}_\delta(\beta)$} instead.
\end{definition}

Lastly, we define the cutoff energy condition.

\begin{definition}
     For a given $\delta > 0$, we say that $(\mathcal{E},\mathcal{F})$ satisfies the \emph{cutoff energy condition} \ref{eq:CE(Dir)} if there is a constant $C \geq 1$ satisfying the following. For all $x \in X$ and $R \in (0,\diam(X))$ there is a cutoff function $\xi \in \mathcal{F}$ for $B(x,r) \subseteq B(x,2r)$ such that
    \begin{equation}\label{eq:CE(Dir)}
        \tag{$\textup{CE}_{\delta}(\Psi)$}
        \int_{B(y,r)} \,d\Gamma\Span{\xi} \leq C\left( \frac{r}{R} \right)^\delta \frac{\mu(B(y,r))}{\Psi(r)}
    \end{equation}
    for all $y \in X$ and $0 < r \leq 3R$.
    If $\Psi(r) = r^\beta$ for some $\beta \geq 2$, we write \hyperref[eq:CE(Dir)]{$\textup{CS}_\delta(\beta)$} instead.
\end{definition}

\subsection{Main theorem}
Now, we are in the position of proving the main theorem of the work.
We begin by stating it in a more general form.
Note that the doubling property of $\mu$ is now an assumption rather than a condition in the equivalence.

\begin{theorem}\label{thm:main}
    The strongly local regular Dirichlet form $(\mathcal{E},\mathcal{F})$ on $L^2(X,\mu)$ satisfies the heat kernel estimates \ref{eq:HKMain} if and only if $(\mathcal{E},\mathcal{F})$ satisfies both the Poincar\'e inequality \ref{eq:PI(Dir)} and the cutoff energy condition \ref{eq:CE(Dir)} for some $\delta > 0$.
\end{theorem}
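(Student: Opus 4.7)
The plan is to establish Theorem \ref{thm:main} by reducing to the known characterization of $\textup{HKE}(\Psi)$ as equivalent to $\mu$-doubling together with $\textup{PI}(\Psi)$ and $\textup{CS}_\delta(\Psi)$ for some $\delta > 0$, due to Grigor'yan--Hu--Lau \cite{GrigorCapacity15} and Barlow--Bass--Kumagai \cite{barlow2006stability}. Since doubling is a standing assumption of the section, it suffices to prove that, under $\textup{PI}(\Psi)$, the conditions $\textup{CE}_\delta(\Psi)$ and $\textup{CS}_\delta(\Psi)$ are equivalent.

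The implication $\textup{CS}_\delta(\Psi) \Rightarrow \textup{CE}_\delta(\Psi)$ is the easy direction. Given the $\textup{CS}_\delta(\Psi)$ cutoff $\xi$ for $B(x,R) \subseteq B(x,2R)$, I would test the inequality against an $f \in \mathcal{F} \cap C_c(X)$ which, by regularity of the form, can be chosen to equal $1$ on a neighborhood of $B(y,2r)$. Strong locality then forces $\Gamma\Span{f}(B(y,2r)) = 0$, while $\int_{B(y,2r)} f^2\,d\mu \leq \mu(\supp_\mu[f])$ and $f_{qc} \equiv 1$ on $B(y,r)$; applying $\mu$-doubling delivers $\textup{CE}_\delta(\Psi)$.

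The substantive direction $\textup{PI}(\Psi) + \textup{CE}_\delta(\Psi) \Rightarrow \textup{CS}_\delta(\Psi)$ is where Theorem \ref{thm:SobolevPoincare}, or more precisely its underlying Proposition \ref{prop:LocalUD}, enters. Fix the $\textup{CE}_\delta(\Psi)$ cutoff $\xi$ for $B(x,R) \subseteq B(x,2R)$ and set $\nu := \Gamma\Span{\xi}$. First, $(\mathcal{F},\Gamma,\Psi)$ qualifies as a $2$-energy structure: $\textup{PI}(\Psi)$ is assumed, and the upper capacity estimate \ref{eq:UCap} is the specialization of $\textup{CE}_\delta(\Psi)$ to $y = x$, $r = R$. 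Next, the $\textup{CE}_\delta(\Psi)$ bound matches the hypothesis \eqref{eq:LocalUD} of Proposition \ref{prop:LocalUD} with $p = q = 2$ and $R_0 := r$, after absorbing the factor $(R_0/R)^{\delta/2}$ into the local constant $L_0$. Proposition \ref{prop:LocalUD} then yields
\[
\int_{B(y,r)} \bigl|\tilde{f} - f_{B(y,r)}\bigr|^2\,d\Gamma\Span{\xi} \lesssim (r/R)^\delta \int_{B(y,\sigma r)} d\Gamma\Span{f}.
\]
Splitting $\tilde{f}^2 \leq 2|\tilde{f} - f_{B(y,r)}|^2 + 2 f_{B(y,r)}^2$, integrating against $\Gamma\Span{\xi}$ over $B(y,r)$, and bounding the averaged term via $f_{B(y,r)}^2 \leq (f^2)_{B(y,r)}$ together with the $\textup{CE}_\delta(\Psi)$ estimate of $\Gamma\Span{\xi}(B(y,r))$ delivers $\textup{CS}_\delta(\Psi)$.

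The main obstacle I anticipate is justifying that the precise representative $\tilde{f}$ from \eqref{eq:Exact} is an admissible quasicontinuous representative $f_{qc}$ in $\textup{CS}_\delta(\Psi)$: the two agree $\mu$-almost everywhere, but since the integral is against a potentially $\mu$-singular measure $\Gamma\Span{\xi}$, a quasi-everywhere identification is required. This should follow from standard quasicontinuity theory for regular Dirichlet forms, together with the fact that energy measures of Sobolev functions do not charge sets of zero capacity. Further technicalities --- a density argument to extend from $f \in \mathcal{F} \cap L^\infty$ to general $f \in \mathcal{F}$, and tracking that the dilation constant $\sigma$ from Proposition \ref{prop:LocalUD} is compatible with the stated form of $\textup{CS}_\delta(\Psi)$ --- are routine.
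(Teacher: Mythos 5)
Your overall strategy coincides with the paper's for the converse direction: the heart of the matter is indeed Proposition \ref{prop:CSandCE}, the equivalence of \ref{eq:CE(Dir)} and \ref{eq:CS(Dir)} under \ref{eq:PI(Dir)}, and your derivation of \ref{eq:CS(Dir)} from \ref{eq:CE(Dir)} via Proposition \ref{prop:LocalUD} applied to $\nu = \Gamma\Span{\xi}$ is exactly the paper's argument. For the forward direction you take a different, but still valid, route: you pass through \ref{eq:CS(Dir)}, citing Barlow--Bass--Kumagai for \ref{eq:HKMain} $\Rightarrow$ \ref{eq:CS(Dir)} and then deducing \ref{eq:CE(Dir)} by strong locality, whereas the paper proves \ref{eq:HKMain} $\Rightarrow$ \ref{eq:CE(Dir)} directly in Proposition \ref{prop:BBK} by revisiting the Barlow--Bass--Kumagai resolvent construction and isolating only the estimates needed for the simpler condition. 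Both routes are fine; the paper's direct proof keeps the argument self-contained, while yours is shorter if one is willing to black-box the \cite{barlow2006stability} implication.

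The genuine gap is in your treatment of the representative issue. You write that the quasi-everywhere identification $\tilde{f} = f_{qc}$ ``should follow from standard quasicontinuity theory for regular Dirichlet forms, together with the fact that energy measures of Sobolev functions do not charge sets of zero capacity.'' This is precisely what is \emph{not} available: identifying $\tilde{f}$ with $f_{qc}$ up to sets of capacity zero amounts to proving that the precise representative \eqref{eq:Exact} is quasicontinuous, and the paper's Remark \ref{rem:qc} explicitly states this is unknown in the present generality (the analogous result is known in certain metric-space frameworks by Kinnunen--Latvala, but not for Dirichlet forms). Two functions agreeing $\mu$-a.e.\ need not agree quasi-everywhere, so the fact that $\Gamma\Span{\xi}$ does not charge polar sets does not resolve the mismatch between $\tilde{f}$ and $f_{qc}$. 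The paper's actual workaround is to first prove the \ref{eq:CS(Dir)} bound for $f \in \mathcal{F}\cap C(X)$, where $f = \tilde{f} = f_{qc}$ pointwise, and then pass to general $f\in\mathcal{F}$ by regularity: choose $f_n \in \mathcal{F}\cap C(X)$ with $f_n \to f$ in $(\mathcal{F},\mathcal{E}_1)$, extract a subsequence converging quasi-everywhere (hence $\Gamma\Span{\xi}$-a.e.) by \cite[Theorem 2.1.4, Lemma 3.2.4]{FOT}, and conclude by Fatou. You do mention ``a density argument to extend from $f\in\mathcal{F}\cap L^\infty$ to general $f\in\mathcal{F}$'' as a routine technicality, but the crucial density step is from $\mathcal{F}\cap C(X)$, and it is needed precisely to avoid the unjustified quasi-everywhere identification, not as a separate tail-end refinement.

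Two smaller remarks. First, your identification of \ref{eq:UCap} as the ``specialization of $\textup{CE}_\delta(\Psi)$ to $y=x$, $r=R$'' gives a bound on $\Gamma\Span{\xi}(B(x,R))$, not on $\Gamma\Span{\xi}(X)$; you still need strong locality to kill the contribution outside $B(x,2R)$ and doubling to control the annulus. As the paper notes, \ref{eq:UCap} is not actually used in Proposition \ref{prop:LocalUD}, so this does not affect the argument, but the phrasing is misleading. Second, the bookkeeping with the factor $(r/R)^{\delta/2}$ absorbed into $L_0$ (and the exponent $\delta/2$ passed to Proposition \ref{prop:LocalUD}) is correct and matches what the paper does implicitly.
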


The first part of our argument is the following equivalence between \ref{eq:CS(Dir)} and \ref{eq:CE(Dir)}.

\begin{proposition}\label{prop:CSandCE}
    Let $\delta > 0$ and assume that $(\mathcal{E},\mathcal{F})$ satisfies \ref{eq:PI(Dir)}.
    Then $(\mathcal{E},\mathcal{F})$ satisfies \ref{eq:CS(Dir)} if and only if it satisfies \ref{eq:CE(Dir)}.
\end{proposition}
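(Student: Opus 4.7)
\textbf{Forward direction (\ref{eq:CS(Dir)} $\Rightarrow$ \ref{eq:CE(Dir)}).} The plan is to check that the very same cutoff function $\xi$ that witnesses \ref{eq:CS(Dir)} for the pair $x,R$ also witnesses \ref{eq:CE(Dir)}, by plugging a ``constant-like'' test function into the right-hand side of \ref{eq:CS(Dir)}. Using regularity of $(\mathcal{E},\mathcal{F})$ I can choose $f \in \mathcal{F} \cap C_c(X)$ with $f \equiv 1$ on an open neighbourhood of $\overline{B(y,2r)}$. Strong locality then forces $\Gamma\langle f\rangle(B(y,2r)) = 0$, while on $B(y,2r)$ we have $f_{qc} \equiv 1$ and $\int_{B(y,2r)} f^2\,d\mu = \mu(B(y,2r))$. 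Substituting into \ref{eq:CS(Dir)} yields $\Gamma\langle \xi\rangle(B(y,r)) \leq C(r/R)^\delta \mu(B(y,2r))/\Psi(r)$, and doubling \eqref{eq:muDoubling} converts $\mu(B(y,2r))$ to $\mu(B(y,r))$, producing \ref{eq:CE(Dir)}.

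\textbf{Reverse direction (\ref{eq:CE(Dir)} $\Rightarrow$ \ref{eq:CS(Dir)}).} Let $\xi$ be the cutoff from \ref{eq:CE(Dir)} and set $\nu := \Gamma\langle \xi\rangle$. The CE bound $\nu(B(y,r)) \leq C (r/R)^\delta \mu(B(y,r))/\Psi(r)$ for $0 < r \leq 3R$ is, after taking square roots, precisely the local upper density hypothesis \eqref{eq:LocalUD} of Proposition \ref{prop:LocalUD} with $p=q=2$, scale $\Psi$, local exponent $\delta/2$, and $L_0 \lesssim (R_0/R)^{\delta/2}$, applied at each pair $x_0 = y$ and $R_0 = r \leq 3R$. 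The proposition then produces
\[
    \int_{B(y,r)} (\tilde f - f_{B(y,r)})^2\,d\nu \lesssim \left(\frac{r}{R}\right)^\delta \int_{B(y,\sigma r)} d\Gamma\langle f\rangle.
\]
To pass to \ref{eq:CS(Dir)}, I would split $f^2 \leq 2(f - f_{B(y,r)})^2 + 2 f_{B(y,r)}^2$ and control the constant piece using Jensen's inequality and \ref{eq:CE(Dir)} directly:
\[
    f_{B(y,r)}^2\,\nu(B(y,r)) \leq \frac{\nu(B(y,r))}{\mu(B(y,r))} \int_{B(y,r)} f^2\,d\mu \lesssim \left(\frac{r}{R}\right)^\delta \frac{1}{\Psi(r)} \int_{B(y,r)} f^2\,d\mu.
\]
Combining the two bounds and absorbing the dilation of the outer ball into the constants via doubling recovers \ref{eq:CS(Dir)}.

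\textbf{Main obstacle.} The key technical nuisance I anticipate is identifying the quasicontinuous representative $f_{qc}$ appearing in \ref{eq:CS(Dir)} with the precise representative $\tilde f$ produced by Lemma \ref{lemma:Lebesguept}, since $\nu = \Gamma\langle \xi\rangle$ may very well charge sets of $\mu$-measure zero. This equality $\nu$-almost everywhere should follow from the standard fact that smooth energy measures of a regular Dirichlet form assign zero mass to sets of $(\mathcal{E},\mathcal{F})$-capacity zero, together with the quasi-everywhere uniqueness of quasicontinuous versions; I would cite this rather than reprove it. A secondary bookkeeping issue, reconciling the dilation factor $2r$ in the definition of \ref{eq:CS(Dir)} with the $\sigma r$ coming from \ref{eq:PI(Dir)} and Proposition \ref{prop:LocalUD}, is a routine covering/doubling exercise.
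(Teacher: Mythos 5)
Your two implications follow the paper's proof essentially line by line: for \ref{eq:CS(Dir)}~$\Rightarrow$~\ref{eq:CE(Dir)} you substitute a locally constant test function and invoke strong locality, exactly as the paper does with its function $h$; for \ref{eq:CE(Dir)}~$\Rightarrow$~\ref{eq:CS(Dir)} you set $\nu = \Gamma\langle\xi\rangle$, feed the \ref{eq:CE(Dir)} bound into Proposition~\ref{prop:LocalUD} with $p=q=2$, and then close with the same Jensen-type split $f^2 \lesssim (f-f_{B(y,r)})^2 + f_{B(y,r)}^2$. Your bookkeeping of the exponents (local exponent $\delta/2$ after taking square roots, $L_0 \lesssim (R_0/R)^{\delta/2}$) is in fact slightly cleaner than the paper's presentation.

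However, your ``main obstacle'' paragraph contains a genuine gap, and it sits exactly where you flag the difficulty. You propose to identify $\tilde f$ with $f_{qc}$ $\nu$-a.e.\ by citing two standard facts: energy measures charge no set of $(\mathcal{E},\mathcal{F})$-capacity zero, and quasicontinuous versions are unique quasi-everywhere. The second fact only compares \emph{two quasicontinuous} representatives; to apply it you would need to know that the precise representative $\tilde f$ defined in \eqref{eq:Exact} \emph{is} quasicontinuous, and this is precisely what is not known here. The paper says so explicitly in Remark~\ref{rem:qc}: under the hypotheses of Proposition~\ref{prop:CSandCE} it is an open question whether $\tilde f$ is quasicontinuous, and the metric-space technique of Kinnunen--Latvala that would settle it is not known to transfer to Dirichlet forms. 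So the citation you want to make does not exist in the form you need it. The paper circumvents this entirely: it first proves \ref{eq:CS(Dir)} for $f \in \mathcal{F}\cap C(X)$ (where $f = \tilde f = f_{qc}$ pointwise and the issue is vacuous), then for general $f$ uses regularity to take $f_n \in \mathcal{F}\cap C(X)$ with $f_n \to f$ in $(\mathcal{F},\mathcal{E}_1)$, extracts a subsequence converging $\Gamma\langle\xi\rangle$-a.e.\ to $f_{qc}$ (via \cite[Theorem~2.1.4 and Lemma~3.2.4]{FOT}), and passes to the limit by Fatou's lemma, using that the right-hand side of \ref{eq:CS(Dir)} is $\mathcal{E}_1$-continuous. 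You should replace your capacity-theoretic identification step with this approximation argument; otherwise the reverse implication is not proved for general $f \in \mathcal{F}$.
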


\begin{remark}
    Under the assumptions in Proposition \ref{prop:CSandCE}, $(\mathcal{F},\Gamma,\Psi)$ is a 2-energy structure in the sense of Definition \ref{def:SSS}.
    The upper capacity estimate \hyperref[eq:PI]{$\textup{Cap}_{2,\leq}(\Psi)$} is not needed here but it is implied by \ref{eq:CE(Dir)}/\ref{eq:CS(Dir)} and the strong locality.
\end{remark}

\begin{proof}[Proof of Proposition \ref{prop:CSandCE}]
    The first implication \ref{eq:CS(Dir)} $\Rightarrow$ \ref{eq:CE(Dir)} is direct. Fix $x \in X$ and $R \in (0,\diam(X))$, and let $\xi \in \mathcal{F}$ be a cutoff function for $B(x,R) \subseteq B(x,2R)$ provided by \ref{eq:CS(Dir)}. Fix any $y \in X$ and $0 < r \leq 3R$, and choose $h \in \mathcal{F} \cap C(X)$ such that $h|_{B(y,2r)} = 1$. Such function $h$ exists by \cite[Exercise 1.4.1]{FOT}, or also by \ref{eq:CS(Dir)}.
    Then \ref{eq:CE(Dir)} follows by applying \ref{eq:CS(Dir)} to $h$,
    \begin{align*}
        \int_{B(y,r)} \,d\Gamma\Span{\xi} & = \int_{B(y,r)} h^2 \,d\Gamma\Span{\xi} \lesssim \left(\frac{r}{R}\right)^\delta \left( \int_{B(y,2r)} \, d\Gamma\Span{h} + \frac{1}{\Psi(r)} \int_{B(y,2r)} h^2 \, d\mu \right)\\
        & = \left(\frac{r}{R}\right)^\delta \frac{\mu(B(x,2r))}{r^\beta} \lesssim \left(\frac{r}{R}\right)^\delta \frac{\mu(B(x,r))}{\Psi(r)}.
    \end{align*}
    Here we used $\Gamma\Span{h}(B(x,2r)) = 0$ which holds by the strong locality of the Dirichlet form, Definition \ref{def:Dir}-(4), and $h|_{B(x,2r)} = 1$; see \cite[Corollary 3.2.1]{FOT}.

    Next, we prove the converse direction \ref{eq:CE(Dir)} $\Rightarrow$ \ref{eq:CS(Dir)}.
    Here we need to be careful with the different $\mu$-representatives because the results in Section \ref{sec:SPI} regard the precise representatives whereas \ref{eq:CS(Dir)} is formulated in terms quasicontinuous representatives.
    Therefore, we first consider a continuous function $f \in \mathcal{F} \cap C(X)$ so that $f = \tilde{f} = f_{qc}$ holds pointwise.
    
    Again, fix $x \in X$ and $R \in (0,\diam(X))$, and let $\xi \in \mathcal{F}$ be a cutoff function for $B(x,R) \subseteq B(x,2R)$ provided by \ref{eq:CE(Dir)}.
    We check the required conditions to apply Proposition \ref{prop:LocalUD} for $\nu = \Gamma\Span{\xi}$.
    Fix $y \in X,\, 0 < r \leq 3R$ and $z \in X$, $0 < s \leq r$ such that $B(y,r) \cap B(z,s) \neq \emptyset$.
    By the cutoff energy condition \ref{eq:CE(Dir)},
    \[
        \int_{B(z,s)} \, d\Gamma\Span{\xi} \leq  C\left(\frac{s}{R}\right)^\delta \frac{\mu(B(z,s))}{\Psi(s)} = L_0 \left(\frac{s}{r}\right)^\delta \frac{\mu(B(z,s))}{\Psi(s)}
    \]
    where $L_0 := C (r/R)^\delta$.
    Thus, we have verified the conditions in Proposition \ref{prop:LocalUD} for the parameters $\nu = \Gamma\Span{\xi}$, $p = 2 = q$, $x_0 = y$ and $R_0 = r$, and we obtain the Poincar\'e type inequality
    \begin{equation}\label{eq:CEY}
        \int_{B(y,r)} (f - f_{B(y,r)})^2 \, d\Gamma\Span{\xi} \lesssim \left( \frac{r}{R} \right)^\delta \int_{B(y,\sigma r)} \,d\Gamma\Span{f}.
    \end{equation}
    We recall that the average $f_{B(y,r)}$ is taken with respect to the reference measure $\mu$.
    
    Now, we estimate
    \begin{align*}
        &  \, \int_{B(y,r)} f^2 \, d\Gamma\Span{\xi}\\
        \lesssim \quad & \int_{B(y,r)} (f - f_{B(y,r)})^2 d\Gamma\Span{\xi} + \Gamma\Span{\xi}(B(y,r)) \kint_{B(y,r)} f^2 \,d\mu && (\text{Jensen's ineq.}) \\
        \lesssim \quad &\left(\frac{r}{R}\right)^\delta \int_{B(y,\sigma r)} \, d\Gamma\Span{f} + \Gamma\Span{\xi}(B(y,r)) \kint_{B(y,r)} f^2 \,d\mu && (\text{Equation \eqref{eq:CEY}}) \\
        \lesssim \quad & \left(\frac{r}{R}\right)^\delta \left( \int_{B(y,\sigma r)} \, d\Gamma\Span{f} + \frac{1}{\Psi(r)} \int_{B(y,\sigma r)} f^2 \, d\mu \right). && \text{\eqref{eq:CE(Dir)}}
    \end{align*}
    In the second line, we also used $(a + b)^2 \leq 2(a^2 + b^2)$.
    Thus, we get
    \[
        \int_{B(y,r)} f^2 \, d\Gamma\Span{\xi} \leq C\left(\frac{r}{R}\right)^\delta \left( \int_{B(y,\sigma r)} \, d\Gamma\Span{f} + \frac{1}{\Psi(r)} \int_{B(y,\sigma r)} f^2 \, d\mu \right).
    \]
    Observe that the inequality in the previous display is almost the cutoff Sobolev inequality \ref{eq:CS(Dir)} and the only difference is that we could have $\sigma > 2$. However, by using a similar covering argument as in the proof of Proposition \ref{prop:Converse}, we get the previous inequality for $\sigma = 2$, namely \ref{eq:CS(Dir)}, for possibly larger constant $C$. This concludes the case when $f$ is continuous.

    Next, we consider a general $f \in \mathcal{F}$, and let $f_{qc}$ be any quasicontinuous $\mu$-representative of $f$. By the regularity, Definition \ref{def:Dir}-(3), there is a sequence $\{f_n\}_{n = 1}^\infty \subseteq \mathcal{F} \cap C(X)$ such that $f_n \to f$ in the Hilbert space $(\mathcal{F},\mathcal{E}_1)$.
    It then follows from \cite[Theorem 2.1.4 and Lemma 3.2.4]{FOT} that, by taking a subsequence if necessary, $f_n$ converges to $f$ point-wise $\Gamma\Span{\xi}$-almost everywhere. Since the right-hand side of \ref{eq:CS(Dir)} is continuous with respect to $\mathcal{E}_1$, and we have verified \ref{eq:CS(Dir)} for continuous $f$, the general case $f \in \mathcal{F}$ now follows from Fatou's lemma.
\end{proof}

\begin{remark}\label{rem:qc}
    Even under the assumptions of Proposition \ref{prop:CSandCE}, we do not know whether the precise representative $\tilde{f}$ in \eqref{eq:Exact} is quasicontinuous. If it was, then the approximation argument in the previous proof is unnecessary. In \cite{kinnunen2002lebesgue} Kinnunen and Latvala resolved this issue in a certain setting of analysis on metric spaces.
    They used a certain capacitary weak type estimate of maximal function and, unfortunately, we do not know when this technique can be generalized to Dirichlet forms.
\end{remark}

Next, we prove the implication \ref{eq:HKMain} $\Rightarrow$ \ref{eq:CE(Dir)}.
The method is essentially the same as the proof of \ref{eq:HKMain} $\Rightarrow$ \ref{eq:CS(Dir)} in  Barlow--Bass--Kumagai \cite{barlow2006stability} but we nevertheless include the details since therein setting slightly differs from the present one\footnote{Some related details were corrected in a later version which is available in arXiv \cite{barlow2006stability}.}. Moreover, we have isolated the details required for the cutoff energy condition.

\begin{proposition}\label{prop:BBK}
    If the strongly local regular Dirichlet form $(\mathcal{E},\mathcal{F})$ satisfies \ref{eq:HKMain} then it satisfies \ref{eq:CE(Dir)} for some $\delta > 0$.
\end{proposition}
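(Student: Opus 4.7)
The plan is to follow the strategy of Barlow--Bass--Kumagai \cite{barlow2006stability}, but to isolate only the ingredients needed to bound $\Gamma\Span{\xi}$ on small balls; the cutoff Sobolev inequality would additionally require controlling $\int f^2\, d\Gamma\Span{\xi}$, whereas here we do not need to interact with a general $f \in \mathcal{F}$. First I would collect the standard consequences of \ref{eq:HKMain} that are available under the present hypotheses: the measure $\mu$ is doubling, the Poincar\'e inequality \ref{eq:PI(Dir)} holds, the mean exit time estimate $\mathbb{E}^z[\tau_{B(z,r)}] \asymp \Psi(r)$ is valid, the parabolic Harnack inequality holds, and hence weak solutions to the associated ``Laplace'' equation are H\"older continuous with some exponent $\delta_0 > 0$ depending only on the constants of \ref{eq:HKMain}.

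Next I would construct $\xi$ using the Hunt process associated with $(\mathcal{E},\mathcal{F})$. Setting $T := \Psi(R)$ and letting $\tau := \tau_{B(x,2R)}$ denote the first exit time from $B(x,2R)$, define
\[
    \psi(y) := \frac{1}{T}\mathbb{E}^y[\tau \wedge T],
\]
which satisfies $0 \leq \psi \leq 1$, $\psi = 0$ outside $B(x,2R)$, and $\psi \geq c_0 > 0$ on $B(x,R)$ by the mean exit time estimate combined with a standard Khasminskii-type argument. Then set $\xi := \varphi \circ \psi$, where $\varphi : [0,1] \to [0,1]$ is a fixed $1$-Lipschitz function with $\varphi(s) = 1$ for $s \geq c_0$ and $\varphi(0) = 0$; this ensures that $\xi$ is a cutoff function for $B(x,R) \subseteq B(x,2R)$. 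That $\xi \in \mathcal{F}$ with $\mathcal{E}(\xi,\xi) \lesssim \mu(B(x,2R))/\Psi(R)$ follows from the variational identification of $\psi$ as a truncated potential of the part process on $B(x,2R)$ together with standard contraction properties of $\Gamma\Span{\cdot}$.

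The heart of the argument is the local energy bound. Fix $y \in X$ and $0 < r \leq 3R$. If $B(y,r) \subseteq B(x,R)$ or $B(y,r) \cap B(x,2R) = \emptyset$, strong locality yields $\Gamma\Span{\xi}(B(y,r)) = 0$, and if $r \geq cR$ for a small absolute constant $c > 0$, doubling reduces the claim to the global bound $\mathcal{E}(\xi,\xi) \lesssim \mu(B(x,2R))/\Psi(R)$. In the remaining and essential case $B(y,2r) \subseteq B(x,2R) \setminus \overline{B(x,R)}$ with $r \ll R$, the function $\xi$ is harmonic on $B(y,2r)$ by the chain rule applied to $\varphi \circ \psi$ and the fact that $\psi$ solves a Poisson equation on this scale. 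A Caccioppoli inequality for $(\mathcal{E},\mathcal{F})$, derivable from \ref{eq:PI(Dir)} in the usual way via multiplication by a suitable cutoff, then gives
\[
    \int_{B(y,r)} d\Gamma\Span{\xi} \lesssim \frac{1}{\Psi(r)} \inf_{c \in \R} \int_{B(y,2r)} (\xi - c)^2 \, d\mu.
\]
H\"older continuity of $\xi$ with exponent $\delta_0$ yields $\inf_{c \in \R} \sup_{B(y,2r)} |\xi - c| \lesssim (r/R)^{\delta_0}$, and combining with the previous display produces \ref{eq:CE(Dir)} with $\delta := 2\delta_0$.

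The main obstacle is the careful construction of $\xi$ simultaneously satisfying (i) membership in $\mathcal{F}$ with the correct global energy bound, (ii) harmonicity on the open annulus $B(x,2R) \setminus \overline{B(x,R)}$ so that the Caccioppoli--H\"older machinery applies cleanly, and (iii) the cutoff property $\xi|_{B(x,R)} = 1$, $\xi|_{X \setminus B(x,2R)} = 0$. These steps are standard in the BBK/Grigor'yan--Hu--Lau framework but require some care to adapt to the general metric measure space setting of the paper, in particular to verify that the composition $\varphi \circ \psi$ preserves the harmonicity required for the Caccioppoli step and to handle the regularity of $\psi$ near the two boundary spheres $\partial B(x,R)$ and $\partial B(x,2R)$.
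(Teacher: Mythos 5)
Your overall template—build a cutoff from a resolvent/potential-type function, extract H\"older regularity from the parabolic Harnack inequality, and then bound local energies via a Caccioppoli-type estimate—is the same as the paper's, and as the alternative proof the paper sketches in the concluding remarks. However, the central step contains an error that breaks the argument as written.

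The claim that ``the function $\xi$ is harmonic on $B(y,2r)$ by the chain rule applied to $\varphi\circ\psi$ and the fact that $\psi$ solves a Poisson equation'' is not correct for two independent reasons. First, a solution of a Poisson equation with nonzero source is not harmonic: your $\psi(y)=\frac{1}{T}\mathbb{E}^y[\tau\wedge T]$ satisfies (formally) $L\psi=\frac{1}{T}(P_T^{B(x,2R)}\mathds{1}-1)\leq 0$, so $\psi$ is \emph{super}harmonic in the annulus, not harmonic. Second, post-composition with a nonlinear Lipschitz $\varphi$ does not preserve harmonicity in any case; the chain rule $d\Gamma\langle\varphi\circ\psi\rangle=(\varphi'(\psi))^2 d\Gamma\langle\psi\rangle$ is a statement about energy measures, not about the PDE satisfied by the composition. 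As a result, the inequality
$\int_{B(y,r)}d\Gamma\langle\xi\rangle\lesssim\Psi(r)^{-1}\inf_c\int_{B(y,2r)}(\xi-c)^2\,d\mu$
is not available: the Caccioppoli/reverse-Poincar\'e inequality is an estimate for (sub/super/parabolic-)harmonic functions, not a consequence of $\textup{PI}(\Psi)$ that holds for arbitrary $\xi\in\mathcal{F}$. Without harmonicity you would pick up an extra term involving the source $L\psi$, and you have not shown that this term is harmless.

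The paper's proof sidesteps this by never invoking full harmonicity. It instead builds the potential via the $\lambda$-resolvent $h=G_\lambda\varphi_{x_0,R_0}$ with $\lambda=\Psi(R_0)^{-1}$, shows that the unclipped function $\xi_0$ is \emph{sub}harmonic in the annulus (using the resolvent identity $\mathcal{E}(h,g)=\int\varphi g\,d\mu-\lambda\int hg\,d\mu$), and then combines (i) a log-Caccioppoli inequality for positive subharmonic functions, which gives only the unimproved bound $\Gamma\langle\xi_0\rangle(B(x,2r))\lesssim\mu(B(x,r))/\Psi(r)$, with (ii) an additional Cauchy--Schwarz step based again on the resolvent identity, where the factor $(r/R)^\delta$ enters through the H\"older bound of Lemma~4.11 applied to $\hat{\xi}_0=\xi_0-\inf_{B(x,4r)}\xi_0$. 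The improved decay comes from step (ii), not from the Caccioppoli estimate alone; your proposal is missing the analogue of that step. Note also that after composing with $\varphi$ you should estimate $\Gamma\langle\psi\rangle$ rather than $\Gamma\langle\xi\rangle$ (using $\Gamma\langle\xi\rangle\leq\Gamma\langle\psi\rangle$ from the $1$-Lipschitz chain rule), since the subharmonicity of $\psi$ does not transfer to $\varphi\circ\psi$; and you should verify which sign of sub/superharmonicity your chosen potential has, since the log-Caccioppoli (or reverse Poincar\'e) inequality applies to one sign only and your $\psi$ has the opposite sign from the paper's $\xi_0$. To repair the proposal with minimal changes, replace the harmonicity claim by the one-sided sub/superharmonicity of the \emph{unclipped} potential and use the appropriate log-Caccioppoli or reverse Poincar\'e inequality (cf.\ Lemma~3.3 of Kajino--Murugan and the paper's Remark~4.20), rather than a Caccioppoli inequality for harmonic functions.
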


The following lemma is used in the argument.

\begin{lemma}[Theorem 1.2 \cite{GrigorCapacity15}]
\label{lemma:Ucap}
    If the strongly local regular Dirichlet form $(\mathcal{E},\mathcal{F})$ on $L^2(X,\mu)$ satisfies \ref{eq:HKMain} then it satisfies \ref{eq:PI(Dir)} and \ref{eq:Ucap(Dir)}.
\end{lemma}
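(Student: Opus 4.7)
The plan is to observe that this statement is essentially contained in Theorem 1.2 of \cite{GrigorCapacity15}, so at the simplest level the proof amounts to citing that result after checking that our Definition \ref{def:heatkernel} matches (or is stronger than) the notion of heat kernel estimates used there. In particular, \ref{eq:HKMain} supplies both the two-sided on-diagonal bound and the sub-Gaussian upper bound, and these are precisely the ingredients Grigor'yan--Hu--Lau use to extract \ref{eq:PI(Dir)} and \ref{eq:Ucap(Dir)} from the analytic characterization of heat kernel bounds. I would therefore open the proof by recording this reference and then sketch, for completeness, the two short spectral / potential-theoretic arguments that give the two conclusions directly.

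For the Poincar\'e inequality \ref{eq:PI(Dir)}, the plan is to run the standard ``semigroup $\Rightarrow$ Poincar\'e'' argument. Fix $x\in X$, $r>0$, set $t=\Psi(r)$, and use the symmetric identity
\[
\norm{f-P_tf}_{L^2}^2 \le 2t\,\mathcal{E}(f,f),
\]
together with the triangle inequality
\[
\int_{B(x,r)} (f-f_{B(x,r)})^2\,d\mu \lesssim \int_{B(x,r)}(f-P_tf)^2\,d\mu + \int_{B(x,r)}(P_tf-f_{B(x,r)})^2\,d\mu.
\]
The first term is already controlled by $\Psi(r)\,\mathcal{E}(f,f)$. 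For the second, I would use the near-diagonal lower bound $p_t(x,y)\gtrsim \mu(B(x,\Psi^{-1}(t)))^{-1}$ on $B(x,\kappa r)$ together with the Gaussian-type upper tail of $p_t$ to write $P_tf(y)$ as a weighted average of $f$ over a slightly larger ball, yielding a pointwise estimate of $|P_tf(y)-f_{B(x,\sigma r)}|$ by an $L^2$-type average of $f-f_{B(x,\sigma r)}$ that can be re-absorbed into $\Psi(r)\,\mathcal{E}(f,f)$.

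For the upper capacity estimate \ref{eq:Ucap(Dir)}, the plan is to use equilibrium potentials. Given a ball $B(x,r)$, let $\varphi\in\mathcal{F}$ be the equilibrium potential of the condenser $(B(x,r),B(x,2r))$, so that $\varphi$ is a cutoff function for $B(x,r)\subseteq B(x,2r)$ and $\mathcal{E}(\varphi,\varphi)$ equals the capacity $\mathrm{Cap}(B(x,r),B(x,2r))$. The standard probabilistic representation identifies this capacity with a quantity controlled by the mean exit time $\mathbb{E}^x[\tau_{B(x,2r)}]$, which under \ref{eq:HKMain} is comparable to $\Psi(r)$; integrating $p_t$ against suitable test functions gives the bound
\[
\mathrm{Cap}(B(x,r),B(x,2r)) \lesssim \frac{\mu(B(x,r))}{\Psi(r)},
\]
which is exactly \ref{eq:Ucap(Dir)}.

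The main obstacle is really bookkeeping rather than new ideas: one must reconcile our Definition \ref{def:heatkernel} (only near-diagonal lower bound, no a priori geodesicity) with the formulation in \cite{GrigorCapacity15}, and ensure that the equilibrium potential is a genuine cutoff in $\mathcal{F}\cap C(X)$ and not merely in $\mathcal{F}$. Since both \ref{eq:PI(Dir)} and \ref{eq:Ucap(Dir)} are local statements and the near-diagonal lower bound plus doubling is enough for both the semigroup comparison and the exit-time estimate, no chaining in the lower bound is needed, and the proof reduces to the citation of \cite[Theorem 1.2]{GrigorCapacity15}.
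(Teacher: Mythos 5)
Your proposal is correct and takes essentially the same approach as the paper: the lemma is stated as an attribution to \cite[Theorem~1.2]{GrigorCapacity15} and the paper provides no further proof, so the content is precisely the citation (together with the bookkeeping check that Definition~\ref{def:heatkernel}, with its near-diagonal lower bound and no geodesicity assumption, is consistent with the Grigor'yan--Hu--Lau formulation, which it is). The sketches you add are standard and sound; the one genuine subtlety you flag, that the equilibrium potential for the condenser must land in $\mathcal{F}\cap C(X)$ to qualify as a cutoff in the paper's sense, is resolved because under \ref{eq:HKMain} one has the elliptic Harnack inequality and boundary H\"older continuity for harmonic functions, and alternatively one could build the cutoff from the resolvent $h_{x_0,R_0}$ as in the proof of Proposition~\ref{prop:BBK}.
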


The proof of Proposition \ref{prop:BBK} is based on the usage of resolvents; for background see \cite[Chapter 1]{FOT}. Given $\lambda > 0$, we define the \emph{$\lambda$-resolvent} as the operator
\begin{equation*}G_\lambda f(x) := \int_X \int_0^\infty e^{-\lambda t} p_t(x,y) f(y)\, dt \, d\mu(y) \end{equation*} where $\{p_t\}_{t > 0}$ is a heat kernel of $(\mathcal{E},\mathcal{F})$ satisfying \ref{eq:HKMain}.

Given $x_0\in X$ and $R_0 > 0$, we first study the function $h_{x_0,R_0} := G_{\lambda} \varphi$ where $\varphi_{x_0,R_0} \in \mathcal{F}$ is a cutoff function for $B(x_0,R_0\kappa /16) \subseteq B(x_0,R_0\kappa /8)$ contained in the domain of the resolvents, and $\lambda := \Psi(R_0)^{-1}$ and $\kappa$ is as in \ref{eq:HKMain}.
It follows from the basic properties of $G_\lambda$ that $h_{x_0,R_0} \in \mathcal{F}$ and, for all $g \in \mathcal{F}$,
\begin{equation}\label{eq:ResolventEq}
    \mathcal{E}(h_{x_0,R_0},g) = \int_X \varphi_{x_0,R_0} \cdot g \, d\mu - \Psi(R_0)^{-1} \int_X h_{x_0,R_0} \cdot g \, d\mu.
\end{equation}

\begin{lemma}\label{lemma:K}
   If the strongly local regular Dirichlet form $(\mathcal{E},\mathcal{F})$ on $L^2(X,\mu)$ satisfies \ref{eq:HKMain} then there are constants $K \geq 1$ and $\sigma > 1$ depending only on the constants associated to $\Psi$ in \eqref{eq:PsiRad}, the constants in \ref{eq:HKMain} and the doubling constant of $\mu$ in \eqref{eq:muDoubling} such that
    \begin{align*}
        h_{x_0,R_0}(x) & \geq 2K \Psi(R_0) \text{ for all } x \in B(x_0,\kappa R_0),\\
        h_{x_0,R_0}(x) & \leq K \Psi(R_0) \text{ for all } x \in X \setminus B(x_0,\sigma R_0).
    \end{align*}
    Here $\kappa$ is the constant in \ref{eq:HKMain}.
\end{lemma}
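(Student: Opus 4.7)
The plan is to read both bounds directly off the resolvent representation
\[
h_{x_0,R_0}(x) \;=\; \int_0^\infty e^{-\lambda t} \int_X p_t(x,y)\, \varphi_{x_0,R_0}(y)\, d\mu(y)\, dt,
\]
with $\lambda = 1/\Psi(R_0)$, by splitting the time integration according to the two halves of \ref{eq:HKMain} and choosing $\sigma$ at the end so the two ranges separate by a factor of two.

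For the lower bound I would restrict the time integration to a window of the form $[\Psi(2R_0),\eta\Psi(R_0)]$ with $\eta$ a fixed constant large enough that, by \eqref{eq:PsiRad}, $\eta\Psi(R_0)>\Psi(2R_0)$ and the window has length comparable to $\Psi(R_0)$. For $x\in B(x_0,\kappa R_0)$ and $y\in B(x_0,\kappa R_0/16)\subseteq\supp\varphi_{x_0,R_0}$ one has $d(x,y)\leq 2\kappa R_0\leq \kappa\Psi^{-1}(t)$ throughout this window, so the near-diagonal lower bound of \ref{eq:HKMain} applies. The doubling properties \eqref{eq:muDoubling} and \eqref{eq:PsiRad} then let me bound $\mu(B(x,\Psi^{-1}(t)))$ by a multiple of $\mu(B(x_0,R_0))$, while $\mu(B(x_0,\kappa R_0/16))\gtrsim \mu(B(x_0,R_0))$; combining with $e^{-\lambda t}\gtrsim 1$ on $[\Psi(2R_0),\eta\Psi(R_0)]$ yields $h_{x_0,R_0}(x)\geq c_1 \Psi(R_0)$ with an explicit admissible constant $c_1>0$.

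For the upper bound I would use the off-diagonal factor in \ref{eq:HKMain}. For $x\notin B(x_0,\sigma R_0)$ with $\sigma\geq\kappa$ and $y\in\supp\varphi_{x_0,R_0}$, $d(x,y)\geq \sigma R_0/2$, so
\[
h_{x_0,R_0}(x) \;\leq\; C\mu(B(x_0,\kappa R_0/8)) \int_0^\infty \frac{e^{-\lambda t}}{\mu(B(x,\Psi^{-1}(t)))} \exp\!\left(-C_1 t\, \Phi\!\left(\tfrac{C_2\sigma R_0}{2t}\right)\right) dt.
\]
I would then perform a Laplace-type analysis of the integral, splitting at $t_*=\Psi(R_0)$: on $[0,t_*]$ the off-diagonal exponential is the dominant decay, and on $[t_*,\infty)$ the factor $e^{-\lambda t}$ is. Using the Legendre-type definition \eqref{eq:PhiRad} of $\Phi$, one checks that $\inf_{t>0}[\lambda t + C_1 t\,\Phi(C_2\sigma R_0/(2t))]$ grows to $\infty$ as a positive power of $\sigma$, and the polynomial factors $1/\mu(B(x,\Psi^{-1}(t)))$ are controlled via \eqref{eq:muDoubling}, \eqref{eq:RadDiam} and \eqref{eq:PsiRad} against $1/\mu(B(x_0,R_0))$ at a polynomial cost. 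The outcome is an estimate $h_{x_0,R_0}(x)\leq c_2(\sigma)\Psi(R_0)$ with $c_2(\sigma)\to 0$ as $\sigma\to\infty$. Choosing $\sigma$ so large that $c_2(\sigma)\leq c_1/3$ and setting $K:=c_1/2$ produces the claimed two inequalities with constants depending only on the admissible data.

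The main obstacle will be the Laplace-type step in the upper bound for a general doubling $\Psi$. When $\Psi(r)=r^\beta$ this reduces to a classical saddle point computation giving decay $\exp(-c\sigma^{\beta/(\beta-1)})$, but for an abstract $\Psi$ satisfying only \eqref{eq:PsiRad} one has to exploit \eqref{eq:PhiRad} directly to lower-bound $\Phi$ by a power and upper-bound the prefactor $\mu(B(x,\Psi^{-1}(t)))^{-1}$ by a power of $t$, in order to extract a power-of-$\sigma$ exponential decay that dominates everything else and is quantitative in the admissible parameters.
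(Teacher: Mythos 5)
Your lower bound argument is essentially the paper's: restrict time to a window of roughly length $\Psi(R_0)$ where $\Psi^{-1}(t)\geq 2R_0$, apply the near-diagonal lower bound, and clean up with doubling of $\mu$ and of $\Psi$. The paper uses the window $[\Psi(2\kappa R_0),\Psi(4\kappa R_0)]$ rather than $[\Psi(2R_0),\eta\Psi(R_0)]$, but the idea is the same.

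The upper bound is where you diverge, and the divergence is the crux of the lemma. The paper splits the time integral at the $\sigma$-dependent threshold $\Psi(\tau R_0)$ (with $\sigma=2\tau\kappa$). On $[0,\Psi(\tau R_0)]$ the off-diagonal exponential is used \emph{only} to cancel the polynomial blow-up of $\mu(B(x,\Psi^{-1}(t)))^{-1}$, via $s^\alpha\lesssim e^s$, producing a $\tau$-independent constant; on $[\Psi(\tau R_0),\infty)$ one just has $\mu(B(x,\Psi^{-1}(t)))^{-1}\leq\mu(B(x,\tau R_0))^{-1}$ and the integral of $e^{-\lambda t}$. The net bound is $h_{x_0,R_0}(x)\lesssim\bigl(\mu(B(x_0,\kappa R_0/8))/\mu(B(x_0,\tau R_0))\bigr)\Psi(R_0)$, and the smallness as $\tau\to\infty$ comes entirely from reverse doubling (uniform perfectness \eqref{eq:RadDiam} via \eqref{eq:RevDoubling}). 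You instead fix the split at $t_*=\Psi(R_0)$ and try to extract the smallness from $\inf_{t>0}[\lambda t+C_1 t\,\Phi(C_2\sigma R_0/(2t))]\to\infty$. That mechanism does exist, but two of your stated facts are off. First, for $\Psi(r)=r^\beta$ the saddle-point value is $\approx\sigma$, not $\sigma^{\beta/(\beta-1)}$: the optimizing time is $t\approx\sigma\Psi(R_0)$, so the exponent $\beta/(\beta-1)$ belongs to the heat kernel itself, not to the Laplace transform in $t$. Second, for $x\in X\setminus B(x_0,\sigma R_0)$ (with no outer restriction) the prefactor $\mu(B(x,\Psi^{-1}(t)))^{-1}$ costs a polynomial in $d(x,x_0)/R_0$, which is unbounded and is not a polynomial in $\sigma$; compensating for it requires running the off-diagonal exponential against $d(x,y)$ rather than $\sigma R_0$. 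None of this is fatal, but it makes your route considerably harder for a general $\Psi$ satisfying only \eqref{eq:PsiRad} -- exactly the "main obstacle" you flag at the end. The paper's $\sigma$-dependent split is precisely the device that sidesteps all of that and lets the exponential stay a pure technical helper.
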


\begin{proof}
    Let $C,C_1,C_2, c,\kappa$ be as in \ref{eq:HKMain}.
    First, for $x \in B(x_0,\kappa R_0)$, we use the lower bound in \eqref{eq:HKMain},
    \begin{align*}
        h_{x_0,R_0}(x) & \geq \int_{B(x_0,\kappa R_0/16)} \int_{\Psi(2\kappa R_0)}^{\Psi(4 \kappa R_0)} e^{-t/\Psi(R_0)} p_{t}(x,y) \,dt\, d\mu(y)\\
        & \geq c\int_{B(x_0,\kappa R_0/16)} \int_{\Psi(2\kappa R_0)}^{\Psi(4 \kappa R_0)}e^{-t/\Psi(R_0)} \frac{1}{\mu(B(x,\Psi^{-1}(t)} \,dt\, d\mu(y)\\
        & \geq  c \frac{\mu(B(R_0\kappa /16))}{\mu(B(x,R_0 4\kappa))} \int_{\Psi(2\kappa R_0)}^{\Psi(4 \kappa R_0)} e^{-t/\Psi(R_0)} \,dt\\
        & = c \left(e^{-\Psi(2\kappa R_0)/\Psi(R_0)} - e^{-\Psi(4\kappa R_0)/\Psi(R_0)}\right) \frac{\mu(B(R_0\kappa/16))}{\mu(B(x,4\kappa R_0))} \Psi(R_0)\\
        & \geq 2K \Psi(R_0),
    \end{align*}
    where $K$ depends on the constants in \eqref{eq:PsiRad} and the doubling constant of $\mu$ in \eqref{eq:muDoubling}.
    
    Next, fix a constant $\tau > 1$ and let $x \in X \setminus B(x_0,2 \tau \kappa R_0)$. Then, by using the upper bound in \ref{eq:HKMain},
    \begin{align*}
        & \quad h_{x_0,R_0}(x)\\
        = & \int_X \int_0^\infty e^{-t/\Psi(R_0)} p_t(x,y) \varphi_{x_0,R_0} \, dt \, d\mu\\
        \leq & \int_{B(x_0,\kappa R_0/8)} \int_{0}^\infty e^{-t/\Psi(R_0)} \frac{C}{\mu(B(x,\Psi^{-1}(t)}\exp\left( - C_1 t \Phi\left(C_2\frac{d(x,y)}{t}\right) \right) \, dt\, d\mu(y)\\
        \leq &\, C\mu(B(x_0,\kappa R_0/8)) \int_{0}^\infty e^{-t/\Psi(R_0)} \frac{1}{\mu(B(x,\Psi^{-1}(t))} \exp\left(-C_1 t \Phi\left(\frac{2C_2\tau\kappa R_0}{t}\right)\right) \, dt.
    \end{align*}
    In the last inequality, we used the fact that $\Phi$ is, by definition, non-decreasing.
    
    We now divide the previous integral into two parts. First, we use the properties of the function $\Phi$ in \cite[Page 30]{murugan2024heat}, the doubling properties of $\mu$ \eqref{eq:RevDoubling} and $\Psi$ \eqref{eq:PsiRad},
    \begin{align}\label{eq:HKUBs}
        & \quad \int_{0}^{\Psi(\tau R_0)} e^{-t/\Psi(R_0)} \frac{1}{\mu(B(x,\Psi^{-1}(t))} \exp\left( - C_1 t \Phi\left(\frac{2C_2\tau \kappa R_0}{t}\right)
        \right) \, dt\\
       \leq & \quad \int_{0}^{\Psi(\tau R_0)} e^{-t/\Psi(R_0)} \frac{1}{\mu(B(x,\Psi^{-1}(t))} \exp\left( - C_3 \left( \frac{\Psi(\tau R_0)}{t}\right)^\frac{1}{\beta_L -1}
        \right) \, dt \nonumber \\
        \leq & \quad \frac{C_4}{\mu(B(x,\tau R_0))}\int_{0}^{\Psi(\tau R_0)} e^{-t/\Psi(R_0)}\left(\frac{\tau R_0}{\Psi^{-1}(t)}\right)^{Q_U}\exp\left( - C_3 \left( \frac{\Psi(\tau R_0)}{t}\right)^\frac{1}{\beta_L -1}
        \right) \, dt \nonumber \\
        \leq & \quad \frac{C_4}{\mu(B(x,\tau R_0))}\int_{0}^{\Psi(\tau R_0)} e^{-t/\Psi(R_0)}\left(\frac{\Psi(\tau R_0)}{t}\right)^{Q_U/\beta_L}\exp\left( - C_3 \left( \frac{\Psi(\tau R_0)}{t}\right)^\frac{1}{\beta_L -1}
        \right) \, dt \nonumber \\
        \leq & \quad \frac{A(\mu,\Psi)}{\mu(B(x,\tau R_0))}\int_{0}^{\Psi(\tau R_0)} e^{-t/\Psi(R_0)}\, dt \leq \frac{A(\mu,\Psi)}{\mu(B(x,\tau R_0))} \Psi(R_0).\nonumber
    \end{align}
    Here $Q_U$ is an upper exponent of $\mu$ and $\beta_L > 1$ is a lower exponent of $\Psi$.
    In between the second last row and the last row, we used $s^\alpha \leq C_\alpha \exp(s)$ for all $s \geq 1$ and $\alpha > 0$.
    The remaining part of the integral is estimated
    \begin{align*}
        & \quad \int_{\Psi(\tau R_0)}^\infty e^{-t/\Psi(R_0)} \frac{1}{\mu(B(x,\Psi^{-1}(t))} \exp\left(-C_1 t \Phi\left(\frac{2C_2\tau\kappa R_0}{t}\right)\right) \, dt\\
       \leq & \quad \frac{1}{\mu(B(x,\tau R_0))} \int_{\Psi(\tau R_0)}^\infty e^{-t/\Psi(R_0)}\exp\left(-C_3 t\Phi\left(\frac{\tau R_0}{t}\right)\right) \, dt\\
       \leq & \quad \frac{C_5}{\mu(B(x,\tau R_0))}\int_{\Psi(\tau R_0)}^\infty e^{-t/\Psi(R_0)} \, dt \leq \frac{C_5}{\mu(B(x,\tau R_0))}\Psi(R_0).
    \end{align*}
    Here, we also used some properties of $\Phi$ \cite[(4.8) in Page 30]{murugan2024heat}.
    By combining the previous three displays,
    \[
    h_{x_0,R_0}(x) \leq L \frac{\mu(B(x_0,\kappa R_0/8))}{\mu(B(x_0,\tau R_0))} \Psi(R_0),
    \]
    where $L$ depends only on the constants mentioned in the claim. Thus, by \eqref{eq:RevDoubling} and the first display in the proof, we reach the conclusion by setting $\sigma := 2\tau \kappa$ where $\tau$ is chosen to be suitably large.
\end{proof}

We also need a suitable local Hölder regularity of $h_{x_0,R_0}$.

\begin{lemma}\label{lemma:h}
    Let $\kappa,\sigma$ and $h_{x_0,R_0}$ be as in Lemma \ref{lemma:K}.
    There are constants $C \geq 1$ and $\delta > 0$ depending only on the constants in \ref{eq:HKMain}, the constants associated with $\Psi$ in \eqref{eq:PsiRad} and the doubling constant of $\mu$ in \eqref{eq:muDoubling} such that the following holds. For all $x \in B(x_0,2\sigma R_0)\setminus B(x_0,R_0\kappa/2)$ and $y,z \in B(x,R_0\kappa /16)$,
    \begin{equation}
        \abs{h_{x_0,R_0}(y)-h_{x_0,R_0}(z)} \leq C \left(\frac{d(y,z)}{R_0}\right)^{\delta}\Psi(R_0).
    \end{equation}
    
\end{lemma}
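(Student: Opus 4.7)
The plan is to observe that $h := h_{x_0, R_0}$ is a bounded weak $\lambda$-eigenfunction of the generator on the region where $\varphi_{x_0, R_0}$ vanishes, and then to apply the Hölder oscillation estimate that follows from the parabolic Harnack inequality, which is available under \ref{eq:HKMain}.

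First, I would extract from \eqref{eq:ResolventEq} and the fact that $\varphi_{x_0, R_0}$ is supported in $B(x_0, R_0\kappa/8)$ the $\lambda$-harmonicity of $h$ off this support: for every $g \in \mathcal{F}$ with $\supp_\mu[g] \subseteq U := X \setminus \overline{B(x_0, R_0\kappa/8)}$, the first term on the right-hand side of \eqref{eq:ResolventEq} vanishes, so that
\[
\mathcal{E}(h, g) + \lambda \int_X h \cdot g\, d\mu = 0, \qquad \lambda = \Psi(R_0)^{-1}.
\]
Combined with the Markov property of $\{P_t\}$ and $0 \leq \varphi_{x_0, R_0} \leq 1$, this also yields the global bound $0 \leq h \leq \int_0^\infty e^{-\lambda t}\, dt = \Psi(R_0)$. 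Furthermore, whenever $x \in B(x_0, 2\sigma R_0) \setminus B(x_0, R_0\kappa/2)$, the ball $B(x, R_0\kappa/4)$ lies inside $U$, since $d(x, x_0) \geq R_0\kappa/2$ forces $d(y, x_0) \geq R_0\kappa/4 > R_0\kappa/8$ for every $y$ in this ball.

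Second, I would invoke the standard Hölder oscillation lemma for $\lambda'$-harmonic functions under the parabolic Harnack inequality: there exist constants $C \geq 1$ and $\delta > 0$, depending only on the parameters associated with \ref{eq:HKMain}, \eqref{eq:PsiRad}, and \eqref{eq:muDoubling}, such that whenever $u \in \mathcal{F}$ is a bounded weak $\lambda'$-eigenfunction of the generator on $B(w, R)$ with $\lambda' \Psi(R) \leq 1$,
\[
|u(y) - u(z)| \leq C \left(\frac{d(y,z)}{R}\right)^\delta \|u\|_{L^\infty(B(w, R))} \qquad \text{for all } y, z \in B(w, R/4).
\]
The required parabolic Harnack inequality is available under \ref{eq:HKMain} by Lemma \ref{lemma:Ucap} and \cite{GrigorCapacity15, barlow2006stability}, and the oscillation lemma above is a standard Moser-type consequence. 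I would apply this to $h$ with $w = x$ and $R = R_0\kappa/4$, so that $\lambda \Psi(R) = \Psi(R_0\kappa/4)/\Psi(R_0) \leq 1$ by monotonicity of $\Psi$. Combined with $\|h\|_{L^\infty} \leq \Psi(R_0)$, this gives for $y, z \in B(x, R_0\kappa/16) \subseteq B(x, R/4)$
\[
|h(y) - h(z)| \leq C \left(\frac{d(y,z)}{R_0\kappa/4}\right)^\delta \Psi(R_0) \lesssim \left(\frac{d(y,z)}{R_0}\right)^\delta \Psi(R_0),
\]
which is the desired inequality.

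The main obstacle is checking that the Hölder oscillation lemma is available in the correct quantitative form for our equation, which contains the lower-order term $\lambda h$ rather than being genuinely harmonic. Because $\lambda \Psi(R) \leq 1$ in our application, this perturbation is a tame additive correction to a standard argument and the estimate is essentially classical in the Dirichlet form setting; alternatively, one could bypass the elliptic formulation entirely and work directly with the heat kernel representation $h(y) = \int_0^\infty e^{-\lambda t}(P_t \varphi_{x_0, R_0})(y)\, dt$, exploiting Hölder-in-space estimates on $P_t f$ that follow from \ref{eq:HKMain} together with the spatial separation $d(y, \supp_\mu[\varphi_{x_0, R_0}]) \gtrsim R_0$.
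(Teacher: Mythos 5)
Your proposal is correct, but takes a genuinely different route from the paper. The paper does not apply a Hölder estimate to $h$ directly. Instead, for each fixed $w \in B(x_0,R_0\kappa/8)$ and each fixed $x$ off the support of $\varphi_{x_0,R_0}$, it observes that the space-time function $(y,t)\mapsto p_t(w,y)$ (extended to be zero for $t\le 0$) is \emph{caloric} on $\left(X\setminus B(x,R_0\kappa/7)\right)\times\R$; it then invokes the parabolic Hölder oscillation estimate from \cite[Corollary 4.2]{barlow2012equivalence} for this caloric function, bounds the relevant supremum of $p_t(w,\cdot)$ by $C/\mu(B(x_0,R_0))$ via the upper bound in \ref{eq:HKMain} (reusing the computation in \eqref{eq:HKUBs}), and finally integrates the resulting pointwise oscillation bound over $t>0$ and $w\in B(x_0,R_0\kappa/8)$. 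Your approach reformulates $h$ itself as a bounded weak $\lambda$-eigenfunction of the generator on the open set $U$ where $\varphi_{x_0,R_0}$ vanishes, uses the global bound $0\le h\le \Psi(R_0)$ coming from the Markov property, and applies an elliptic Hölder oscillation estimate for such eigenfunctions. The route you mention at the end of your write-up, working directly with the heat kernel representation, is precisely what the paper does. Both are legitimate. The paper's route has the advantage that the quoted Hölder estimate is applied to a function that is literally caloric, so no perturbative argument is needed; your route is tidier (no integration over $t$ and $w$), but the "standard oscillation lemma for $\lambda'$-eigenfunctions" you invoke is a small step away from the quotable statement. One clean way to close that step inside your framework is to note that $v(x,t):=e^{\lambda t}h(x)$ is a genuine caloric function on $U\times\R$, with $\|v\|_{L^\infty}$ over the relevant parabolic cylinder controlled by $e^{\lambda\Psi(R)}\Psi(R_0)\lesssim\Psi(R_0)$ when $\lambda\Psi(R)\lesssim 1$; applying the caloric oscillation estimate to $v$ at $t=0$ recovers your claimed bound for $h$. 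Finally, note one small quantitative slip: $\lambda\Psi(R_0\kappa/4)\le 1$ only holds when $\kappa\le 4$; in general you only get $\lambda\Psi(R_0\kappa/4)\lesssim 1$ via the doubling property of $\Psi$, which is of course just as good after absorbing the constant.
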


\begin{proof}
First, for the sake of convenience, we define the heat kernel also for non-positive times by $p_t(x,y) = 0$ for all $t \leq 0$ and $x,y \in X$.

Now, fix $x\in B(x_0,2\sigma R_0)\setminus B(x_0,R_0\kappa/2)$.
Then, given $w \in B(x_0,R_0 \kappa /8)$, the function $(y,t) \mapsto p_t(w,y)$ is caloric (in the sense of \cite{barlow2012equivalence}) in $X \setminus B(x,R_0 \kappa/7) \times \R$.
To see this, it is trivial when $t < 0$, and the case $t>0$ follows from the fact that the heat kernel is caloric. The case $t = 0$ follows from the upper bound in \ref{eq:HKMain} and the fact that $B(x,R_0 \kappa/6) \cap B(x_0,R_0\kappa /8) =\emptyset$.

Thus, it follows from the parabolic Harnack inequality \cite[Corollary 4.2]{barlow2012equivalence} that there are $C \geq 1$ and $\delta > 0$ such that for all $w \in B(x_0,R_0\kappa/8)$, $y,z \in B(x,R_0\kappa/16)$ and $t > 0$,
\[
    \abs{p_t(w,y) - p_t(w,z)} \lesssim \left(\frac{d(y,z)}{R_0}\right)^{\delta/2} \esssup_{(y,t)} p_t(w,y)
\]
where the essential supremum (with respect to $d\mu \otimes dt$) is over $(X \setminus B(x_0,R_0 \kappa /6)) \times \R$.
Using the fact the points $y$ in the previous $\esssup$ satisfy $d(w,y) \geq r \kappa /24$ along with the upper bound estimates used in \eqref{eq:HKUBs} we see that
\begin{align*}
    p_t(w,y) \lesssim \frac{1}{\mu(B(x_0,R_0)}.
\end{align*}
By integrating,
\begin{align*}
    \abs{h_{x_0,R_0}(y) - h_{x_0,R_0}(z)} & \leq \int_0^\infty
    \int_{B(x_0,R_0 \kappa /8)} e^{-t/\Psi(R_0)}\abs{p_t(w,y)-p_t(w,z)} \, d\mu(x) \, dt\\
    & \lesssim \left( \frac{d(y,z)}{R_0}\right)^{\delta/2} \int_0^\infty e^{-t/\Psi(R_0)} \, dt = \left( \frac{d(y,z)}{R_0}\right)^{\delta/2} \Psi(R_0),
\end{align*}
which completes the proof.
\end{proof}

We are now ready to establish Proposition \ref{prop:BBK}.
During the proof, we shall denote the two variable energy measures $\Gamma\Span{f,g} := 1/4(\Gamma\Span{f + g} - \Gamma\Span{f-g})$. Moreover, we also use some of their basic formulas and inequalities which can all be found in \cite[Pages 1488-1489]{GrigorCapacity15}.

\begin{proof}[Proof of Proposition \ref{prop:BBK}]
    Let $\kappa,\sigma,h_{x_0,R_0}$ and $K$ be as in Lemma \ref{lemma:K} and $\delta$ as in Lemma \ref{lemma:h}
    We define the cutoff function $\xi := (h_{x_0,R_0}\cdot (K\Psi(R_0))^{-1} - 1)^+ \land 1$, which is a cutoff function for $B(x_0,\kappa R_0) \subseteq B(x_0,\sigma R_0)$ by Lemma \ref{lemma:K}. By a simple covering argument and the strong locality, it is sufficient to show that
    \begin{equation}\label{eq:ConstXi}
        \int_{B(x,r)} \, d\Gamma\Span{\xi} \lesssim \left( \frac{r}{R_0} \right)^{\delta} \frac{\mu(B(x,r))}{\Psi(r)}
    \end{equation}
    for all $x \in B(x_0,2\sigma R_0)$ and $0 < r \leq R_0\kappa/16$.

    We fix such $x$ and $r$, and assume first that $d(x_0,x) < R_0\kappa/2$. It follows from $\xi|_{B(x_0,R_0\kappa)} = 1$ and the strong locality that $\Gamma\Span{\xi}(B(x,r)) = 0$, and therefore \eqref{eq:ConstXi} holds. We next assume $d(x_0,x) \geq R_0\kappa/2$.
    
    Since \ref{eq:Ucap(Dir)} holds by Lemma \ref{lemma:Ucap}, there is a cutoff function $f \in \mathcal{F}$ for $B(x,r) \subseteq B(x,2r)$ such that $\mathcal{E}(f) \lesssim \mu(B(x,r))/\Psi(r)$.
    Consider the function $\xi_0 := h_{x_0,R_0}\cdot (K\Psi(R_0))^{-1}$ and note that $\Gamma\Span{\xi}\leq \Gamma\Span{\xi_0}$.
    We let $\hat{\xi}_0 := \xi_0 - \inf_{B(x,4r)}\xi_0$. Then, by the definition of the energy measures and strong locality,

    \begin{align*} \int_{B(x,r)} \,d\Gamma\Span{\xi_0} & = \int_{B(x,r)} f \,d\Gamma\Span{\hat{\xi}_0,\xi_0} \leq \mathcal{E}(f\hat{\xi}_0,\xi_0) - \int_{B(x,2r)} \hat{\xi}_0 \, d\Gamma\Span{f,\xi_0}. \end{align*}
    For the two terms in the above display, the first one is estimated
\begin{align*}
K\Psi(R_0)\mathcal{E}(f\hat{\xi}_0,\xi_0) & \leq \mathcal{E}(f\hat{\xi}_0,h_{x_0,R_0}) + \Psi(R_0)^{-1}\int_X f \hat{\xi}_0h_{x_0,R_0}\, d\mu && (f \hat{\xi}_0h_{x_0,R_0} \geq 0) \\& = \Psi(R_0)^{-1} \int_{X} \varphi_{x_0,R_0} f\hat{\xi}_0 d\mu && (\text{Eq. \eqref{eq:ResolventEq}})  \\& = 0 && (f|_{B(x,R_0 \kappa/8)} = 0).\end{align*}
For the second term,
\begin{align*}
& \quad \left|\int_{B(x,2r)} \hat{\xi}_0 \,d\Gamma\Span{f,\xi_0} \right|\lesssim \left( \frac{r}{R} \right)^{\delta} \left|\int_{B(x,2r)}\, d\Gamma\Span{f,\xi_0} \right| && (\text{Lemma \ref{lemma:h}})\\
& \leq \quad \left( \frac{r}{R} \right)^{\delta} \left( \int_{B(x,2r)} \,d\Gamma\Span{f}\right)^{\frac{1}{2}} \left( \int_{B(x,2r)} \,d\Gamma\Span{\xi_0} \right)^{\frac{1}{2}} && \text{(Cauchy--Schwarz)}\\
& \lesssim \quad \left( \frac{r}{R} \right)^{\delta} \left( \frac{\mu(B(x,r))}{\Psi(r)}\right)^{\frac{1}{2}} \left( \int_{B(x,2r)} \,d\Gamma\Span{\xi_0} \right)^{\frac{1}{2}}. && \eqref{eq:Ucap(Dir)}
\end{align*}
Then, we use the fact that $\xi_0$ is subharmonic in $B(x,4r)$; see \cite[Page 1490]{GrigorCapacity15} for the definition.
Indeed, given a non-negative $\psi \in \mathcal{F}\cap C(X)$ such that $\psi|_{X\setminus B(x,4r)}=0$,
\begin{align*}
    K\Psi(R_0)\mathcal{E}(\xi_0,\psi) &= \mathcal{E}(h_{x_0,R_0},\psi)\\
    & = \mathcal{E}(h_{x_0,R_0},\psi)+\int_{X}\varphi_{x_0,R_0}\psi\, d\mu && (\varphi_{x_0,R_0} \cdot \psi = 0)\\
    & = -\Psi(R_0)^{-1} \int_{X}h_{x_0,R_0}\cdot \psi \,d\mu && (\text{Eq. \eqref{eq:ResolventEq}})\\
    & \leq 0. && (\psi \geq 0)
\end{align*}
Also note that it follows from the arguments in Lemma \ref{lemma:h} that $0 < \xi_0 \leq L$ in $B(x,2r)$ where $L$ depends only on the constants mentioned in Lemma \ref{lemma:h}.
We now use the log-Caccioppoli inequality \cite[Lemma 7.1]{GrigorCapacity15} and the chain rule,
\begin{align*}
    \int_{B(x,2r)}\, d\Gamma\Span{\xi_0} &= \int_{B(x,2r)} (\xi_0)^2 d\Gamma\Span{\log(\xi_0)} \lesssim L\frac{\mu(B(x,r)}{\Psi(r)}.
\end{align*}
By combining all the previous estimates,
\begin{align*}
    \int_{B(x,r)}\, d\Gamma\Span{\xi} \leq \int_{B(x,r)}\, d\Gamma\Span{\xi_0} \lesssim \left(\frac{r}{R_0}\right)^{\delta}\frac{\mu(B(x,r))}{\Psi(r)}.
\end{align*}
Thus, we have verified \eqref{eq:ConstXi} which completes the proof.
\end{proof}

We now have gathered everything we need for the main result of the work.

\begin{proof}[Proof of Theorem \ref{thm:main}]
    The first implication \ref{eq:HKMain} $\Rightarrow$ \ref{eq:PI(Dir)} \& \ref{eq:CE(Dir)} follows from Proposition \ref{prop:BBK} and Lemma \ref{lemma:Ucap}.
    
    The converse implication \ref{eq:PI(Dir)} \& \ref{eq:CE(Dir)} $\Rightarrow$ \ref{eq:HKMain} follows from the characterization of sub-Gaussian heat kernel estimates in \cite[Theorem 1.2]{GrigorCapacity15} and Proposition \ref{prop:CSandCE}. Indeed, it is obvious that the cutoff Sobolev inequality of the present work implies the generalized capacity condition in \cite[Page 1492]{GrigorCapacity15}.
\end{proof}

Lastly, we prove the version of the main theorem in Introduction. Here, we will assume that the ambient metric space is geodesic but we drop the doubling assumption of $\mu$.

\begin{proof}[Proof of Theorem \ref{thm:SUPER}]
    Since the ambient metric space $(X,d)$ is geodesic, we can use the characterization of the heat kernel estimates \ref{eq:HK} of Barlow--Bass--Kumagai \cite[Theorems 1.15-1-16]{barlow2006stability}.
    The reasoning is identical to the one in the proof of Theorem \ref{thm:main}.
\end{proof}

\subsection{Regularity estimates}\label{subsec:Regularity}
We establish some further regularity estimates related to the cutoff energy condition.
First, we prove a two-point estimate for a sharp maximal type function. The method is the same as in \cite[Section 4.3]{MaxFun2021}.

\begin{lemma}\label{lemma:Campanato}
    Let $p \geq 1, \, f \in L^p(X,\mu)$, $R > 0$ and $\delta > 0$.
    Let $M : X \to \R$ be the sharp maximal type function
    \[
        M(x) := \sup_{x \in B(y,r)} r^{-\delta} \kint_{B(y,r)} \abs{f - f_{B(y,r)}}^p \, d\mu
    \]
    where the supremum is taken over all $y \in X$ and $0 < r \leq R$ such that $x \in B(y,r)$.
    Then, there is a constant $C \geq 1$ such that for $\mu$-almost every $x \in X$ the two-point estimate
    \[
        \abs{f(x) - f(y)} \leq C d(x,y)^{\frac{\delta}{p}}(M(x)^{\frac{1}{p}} + M(y)^{\frac{1}{p}})
    \]
    holds for $\mu$-almost every $y \in B(x,R/4)$.
    This result is quantitative in the sense that $C$ depends only on the doubling constant of $\mu$ in \eqref{eq:muDoubling}, $\delta$ and $p$.
\end{lemma}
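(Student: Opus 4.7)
The plan is to prove the two-point estimate by a standard Campanato-type telescoping argument, combined with Hölder's inequality to convert $L^p$ averages into $L^1$ averages controlled by $M$.

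First I would invoke the Lebesgue differentiation theorem (valid since $\mu$ is doubling on a locally compact space) to restrict attention to Lebesgue points $x,y$ of $f$, at which $f(x) = \lim_{i\to\infty} f_{B(x,2^{-i}r)}$ for any fixed $r > 0$. Set $r := d(x,y)$ and note $r \leq R/4$, so that all balls $B(x,2^{-i}r)$, $B(y,2^{-i}r)$ and $B(x,2r)$ have radii at most $R$; the supremum defining $M$ therefore ``sees'' all of them. Then split
\[
    |f(x) - f(y)| \leq |f(x) - f_{B(x,2r)}| + |f(y) - f_{B(x,2r)}|.
\]

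Next, for the first summand I would telescope along the dyadic chain
\[
    |f(x) - f_{B(x,2r)}| \leq \sum_{i=0}^{\infty} |f_{B(x,2^{-i+1}r)} - f_{B(x,2^{-i}r)}|,
\]
and bound each difference by enlarging the integration domain using doubling:
\[
    |f_{B(x,2^{-i+1}r)} - f_{B(x,2^{-i}r)}| \lesssim \kint_{B(x,2^{-i+1}r)} |f - f_{B(x,2^{-i+1}r)}|\, d\mu.
\]
Hölder's inequality turns this into the $L^p$-oscillation, which by definition of $M$ is at most $(2^{-i+1}r)^{\delta/p} M(x)^{1/p}$. Summing the geometric series in $2^{-i\delta/p}$ gives
\[
    |f(x) - f_{B(x,2r)}| \lesssim r^{\delta/p} M(x)^{1/p}.
\]

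For the second summand I would use the same device but comparing $f(y)$ to $f_{B(y,r)}$ (telescoping dyadically) and then bridging to $f_{B(x,2r)}$ via
\[
    |f_{B(y,r)} - f_{B(x,2r)}| \leq \kint_{B(y,r)} |f - f_{B(x,2r)}|\, d\mu \lesssim \kint_{B(x,2r)} |f - f_{B(x,2r)}|\, d\mu,
\]
where the last inequality uses $B(y,r) \subseteq B(x,2r)$ together with the doubling property of $\mu$. Since $y \in B(x,2r)$ with $2r \leq R$, the right-hand side is bounded by $(2r)^{\delta/p} M(y)^{1/p}$ after Hölder. Adding the two sides yields the claimed estimate. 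I do not expect a major obstacle here; the only care points are verifying that all balls used lie in the admissible range $r_i \leq R$ (handled by the hypothesis $d(x,y) \leq R/4$) and tracking that the doubling constant produces inessential multiplicative factors, which is exactly the source of $C$ in the conclusion.
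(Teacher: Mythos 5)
Your proof is correct and follows essentially the same Campanato-type telescoping strategy as the paper: restrict to Lebesgue points, split at the intermediate average $f_{B(x,2r)}$ with $r = d(x,y)$, telescope dyadically on each side, and use Hölder's inequality to pass from $L^1$ to $L^p$ oscillations controlled by $M$. The only (inessential) difference is in the second summand: the paper bridges through $f_{B(y,4r)}$ and uses $B(x,2r)\subseteq B(y,4r)$, while you bridge through $f_{B(y,r)}$ and use $B(y,r)\subseteq B(x,2r)$, which is if anything slightly cleaner since your largest ball has radius $2r$ and would in fact tolerate $y\in B(x,R/2)$.
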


\begin{proof}
    By the Lebesgue differentiation theorem; see \cite[Chapter 1]{Heinonen}, it is sufficient to verify the two-point estimate when $x$ and $y$ are both points $z \in X$ such that
    \begin{equation}\label{eq:CampanatoLebesgue}
        f(z) = \lim_{r \to 0^+} \kint_{B(z,r)} f\, d\mu.
    \end{equation}
    Let $x \in X$ and $y \in B(x,R/4)$ be such points, and we assume that they are distinct.
    For simplicity, we denote $r:=d(x,y) > 0$. We estimate
    \[
        \abs{f(x) - f(y)} \leq \abs{f(x) - f_{B(x,2r)}} + \abs{f(y) - f_{B(x,2r)}}
    \]
    and consider the first term in the right hand side of the previous display. By using \eqref{eq:CampanatoLebesgue}, it follows from the telescoping argument in the proof of Proposition \ref{prop:LocalUD},
    \begin{align*}
        & \quad \, \abs{f(x) - f_{B(x,2r)}}
        \leq \sum_{i = -1}^\infty \abs{f_{B(x,2^{-i}r)} - f_{B(x,2^{-(i+1)}r)}}\\
        \lesssim & \, \quad \sum_{i = -1}^\infty \kint_{B(x,2^{-i})} \abs{f(z)-f_{B(x,2^{-i})r}}\, d\mu(z)\\
        \lesssim & \, \quad r^{\delta/p} \sum_{i = -1}^\infty 2^{-i/(\delta p)} \left( (2^{-i}r)^{-\delta} \kint_{B(x,2^{-i})} \abs{f(z)-f_{B(x,2^{-i})r}}^p\, d\mu(z)\right)^{\frac{1}{p}}\\
        \leq & \quad r^{\delta/p} M(x)^{\frac{1}{p}}\sum_{i = -1}^\infty 2^{-i/(\delta p)} \lesssim r^{\delta/p} M(x)^{\frac{1}{p}}.
    \end{align*}
    By a similar argument,
    \begin{align*}
        & \abs{f(y) - f_{B(x,2r)}}
        \leq \abs{f(y) - f_{B(y,4r)}} + \abs{f_{B(x,2r)} - f_{B(y,4r)}}\\
        \lesssim \quad & r^{\delta/p}M(y) + r^{\delta/p} \left( (4r)^{-\delta} \kint_{B(y,4r)} \abs{f(z) - f_{B(y,4r)}}^p \, d\mu(z) \right)^{\frac{1}{p}}
        \lesssim \, r^{\delta/p}M(y)^{\frac{1}{p}}.
    \end{align*}
    The two-point estimate now follows by combining the estimates above.
\end{proof}

We now show that the cutoff energy condition implies Hölder reguarity.

\begin{corollary}\label{cor:holder}
    Let $\delta > 0$ and assume that the strongly local regular Dirichlet form $(\mathcal{E},\mathcal{F})$ on $L^2(X,\mu)$ satisfies \ref{eq:PI(Dir)}.
    Let $x \in X$ and $R > 0$ and assume that $\xi \in \mathcal{F}$, not necessarily continuous, satisfies $0 \leq \xi \leq 1$ $\mu$-almost everywhere and the energy upper bounds \ref{eq:CE(Dir)} for the open ball $B(x,R)$.
    Then the precise representative $\tilde{\xi}$ satisfies the global Hölder regularity
    \[
        \abs{\tilde{\xi}(y) - \tilde{\xi}(z)} \leq C\left(\frac{d(y,z)}{R}\right)^{\delta/2} \text{ for all } y,z \in X.
    \]
    This result is quantitative in the sense that $C$ depends only on $\delta$, the constants associated to $\Psi$ in \eqref{eq:Psi}, the constants in \ref{eq:CE(Dir)}, \ref{eq:PI(Dir)} and \eqref{eq:muDoubling}.
\end{corollary}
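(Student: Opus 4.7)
The plan is to combine the Poincar\'e inequality \ref{eq:PI(Dir)} with the hypothesized energy bounds on $\xi$ to obtain a Campanato-type estimate, and then invoke Lemma \ref{lemma:Campanato} together with the definition of $\tilde{\xi}$ in \eqref{eq:Exact}.

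First, let $\sigma \geq 1$ be the constant from \ref{eq:PI(Dir)}. For any $y \in X$ and any $r > 0$ with $\sigma r \leq 3R$, applying \ref{eq:PI(Dir)} to $\xi$ and then the assumed cutoff energy bound on $B(y,\sigma r)$ gives
\[
\int_{B(y,r)} \abs{\xi - \xi_{B(y,r)}}^2 \, d\mu \leq C\Psi(r) \int_{B(y,\sigma r)} d\Gamma\langle \xi\rangle \leq C' \left(\frac{r}{R}\right)^\delta \frac{\Psi(r)}{\Psi(\sigma r)} \mu(B(y,\sigma r)).
\]
Using the doubling property of $\Psi$ in \eqref{eq:Psi} to bound $\Psi(r)/\Psi(\sigma r)$ and the doubling of $\mu$ in \eqref{eq:muDoubling} to bound $\mu(B(y,\sigma r))/\mu(B(y,r))$, this reduces to
\[
r^{-\delta}\kint_{B(y,r)} \abs{\xi - \xi_{B(y,r)}}^2 \, d\mu \leq C'' R^{-\delta}
\]
for all $y \in X$ and $0 < r \leq R_0 := 3R/\sigma$.

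Next, I apply Lemma \ref{lemma:Campanato} with $p = 2$ and with the above value of $R_0$ in place of $R$. The displayed Campanato estimate shows that the sharp maximal function $M$ in that lemma satisfies $M(z) \leq C''R^{-\delta}$ for every $z \in X$. Hence the two-point estimate in Lemma \ref{lemma:Campanato} yields, for $\mu$-a.e. $y\in X$ and $\mu$-a.e. $z \in B(y,R_0/4)$,
\[
\abs{\xi(y) - \xi(z)} \leq C''' d(y,z)^{\delta/2}\bigl(M(y)^{1/2}+M(z)^{1/2}\bigr) \leq C'''' \left(\frac{d(y,z)}{R}\right)^{\delta/2}.
\]
For pairs with $d(y,z) \gtrsim R$, the assumption $0 \leq \xi \leq 1$ $\mu$-a.e. makes the bound trivial after enlarging the constant.

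The final step is to upgrade this $\mu$-a.e. pointwise estimate to the pointwise defined precise representative $\tilde\xi$ from \eqref{eq:Exact}. Here the Campanato bound above gives, by a telescoping argument as in the proof of Proposition \ref{prop:LocalUD} combined with H\"older's inequality, that at \emph{every} $y \in X$ the averages $\kint_{B(z_i,r_i)} \xi\,d\mu$ are Cauchy along any sequence $(z_i,r_i)$ with $y \in B(z_i,r_i)$ and $r_i \to 0^+$, with a common continuous H\"older limit. This limit must coincide with $\tilde\xi(y)$, so $\tilde\xi$ is in fact continuous everywhere, agrees with $\xi$ $\mu$-a.e. by Lebesgue differentiation, and inherits the desired global H\"older estimate from the a.e. bound above.

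The main technical point I anticipate is the last step: one must verify carefully that the $\limsup$ over all admissible centers in the definition \eqref{eq:Exact} is indeed a limit and equals the continuous representative. This is precisely what the Campanato-type bound is engineered to deliver, via the telescoping/Cauchy argument used repeatedly in the preceding sections.
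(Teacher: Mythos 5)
Your proposal is correct and follows essentially the same route as the paper's proof: derive the Campanato bound from \ref{eq:PI(Dir)} and \ref{eq:CE(Dir)}, invoke the two-point estimate of Lemma \ref{lemma:Campanato} to get the $\mu$-a.e.\ H\"older bound (using $0 \le \xi \le 1$ to handle far-apart pairs), and finally upgrade to the precise representative $\tilde\xi$ via the definition \eqref{eq:Exact}. The only difference is that you spell out the last step (Cauchy property of the shrinking averages and continuity of the limit) in more detail than the paper, which simply remarks that it "follows easily."
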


\begin{proof}
    It follows from the Poincar\'e inequality \ref{eq:PI(Dir)} and the energy upper bound \ref{eq:CE(Dir)} that the following Campanato type semi-norm has the bound
    \[
        \sup_{ \substack{y \in X \\ 0 < r \leq R/\sigma } } r^{-\delta}\kint_{B(y,r)} \abs{\xi - \xi_{B(y,r)}}^p \, d\mu \lesssim R^{-\delta}.
    \]
    The constant $\sigma$ is from \ref{eq:PI(Dir)}.
    By combining this with the two-point estimate in Lemma \ref{lemma:Campanato}, we get that for $\mu$-almost every $y \in X$ and $\mu$-almost every $z \in B(y,R/(4\sigma))$,
    \[
        \abs{\xi(y) - \xi(z)} \lesssim \left(\frac{d(y,z)}{R}\right)^{\delta/2}.
    \]
    By using the fact that $0 \leq \xi \leq 1$ holds $\mu$-almost everywhere, we see that the Hölder estimate in the previous display holds for $\mu$-almost every $y,z \in X$.

    Now, by using the fact the Hölder regularity holds $\mu$-almost everywhere, it follows easily from the definition of the precise representative in \eqref{eq:Exact} that $\tilde{\xi}$ satisfies the desired Hölder regularity everywhere.
\end{proof}

We prove that the cutoff Sobolev inequality self-improves, quantitatively. This is very similar to \cite[Theorem 5.4]{barlow2004stability}.

\begin{corollary}
    Let $\delta > 0$ and assume that the strongly local regular Dirichlet form $(\mathcal{E},\mathcal{F})$ on $L^2(X,\mu)$ satisfies \ref{eq:PI(Dir)}.
    Let $x \in X$, $R \in (0,\diam(X))$ and assume that
    $\xi \in \mathcal{F}$ is a cutoff function for $B(x,r) \subseteq B(x,2r)$ satisfying the condition in \ref{eq:CS(Dir)}. Then there are constants $C,\sigma \geq 1$ and $q > 2$ such that the Sobolev--Poincar\'e inequality
    \[
        \left( \int_{B(y,r)} \abs{f_{qc} - f_{B(y,r)}}^q \, d\Gamma\Span{\xi} \right)^{\frac{1}{q}} \leq C \left( \frac{r}{R} \right)^{\frac{\delta}{q}} \left(\frac{\mu(B(y,r))}{\Psi(r)}\right)^{\frac{1}{q} - \frac{1}{2}}\left( \int_{B(y,\sigma r)} \, d\Gamma\Span{f} \right)^{\frac{1}{2}}.
    \]
    holds for all $y \in X$, $0 < r \leq 2R$ and $f \in \mathcal{F}$. This result is quantitative in the sense that $q$ depends only on the lower exponent $Q_L$ of $\mu$ in \eqref{eq:RevDoubling}, the the upper exponent $\beta_U$ of $\Psi$ in \eqref{eq:Psi} and $\delta$. The constants $C,\sigma$ depend only on the constants in \ref{eq:PI}, \ref{eq:CS(Dir)} and the constants associated to $\Psi$ in \eqref{eq:Psi}.
\end{corollary}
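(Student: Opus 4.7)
The plan is to apply Proposition~\ref{prop:LocalUD} on each ball $B(y,r)$ with $0 < r \leq 2R$, taking $p = 2$, $\nu := \Gamma\langle \xi\rangle$, $x_0 := y$, $R_0 := r$, and a carefully chosen constant $L_0$ that is exactly the prefactor appearing in the target inequality. The starting point is that the implication $\ref{eq:CS(Dir)} \Rightarrow \ref{eq:CE(Dir)}$ in the first part of the proof of Proposition~\ref{prop:CSandCE} applies cutoff-by-cutoff, so that the given $\xi$ itself satisfies the pointwise bound
\[
\Gamma\langle \xi\rangle(B(z,s)) \;\leq\; C_0 \Bigl(\tfrac{s}{R}\Bigr)^{\delta}\,\frac{\mu(B(z,s))}{\Psi(s)} \qquad \text{for all } z \in X, \; 0 < s \leq 3R.
\]
Raising to the power $1/q$ and splitting $\mu(B(z,s))/\Psi(s)$ between the exponents $1/q$ and $1/2-1/q$ gives
\[
\Gamma\langle \xi\rangle(B(z,s))^{1/q} \;\leq\; C_0^{1/q}\Bigl(\tfrac{s}{R}\Bigr)^{\delta/q}\Bigl(\tfrac{\Psi(s)}{\mu(B(z,s))}\Bigr)^{1/2 - 1/q}\Bigl(\tfrac{\mu(B(z,s))}{\Psi(s)}\Bigr)^{1/2}.
\]

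The central technical step is to check that, for $q > 2$ chosen sufficiently close to $2$, the prefactor above has the form $L_0 (s/r)^{\delta'}$ with $\delta' > 0$ and $L_0 := C_0^{1/q}(r/R)^{\delta/q}(\Psi(r)/\mu(B(y,r)))^{1/2 - 1/q}$, which is precisely the constant required on the right-hand side of the target inequality. Using the doubling \eqref{eq:PsiRad} of $\Psi$ and the reverse doubling \eqref{eq:RevDoubling} of $\mu$, whenever $B(z,s) \cap B(y,r) \neq \emptyset$ with $s \leq r$ we have
\[
\Bigl(\tfrac{\Psi(s)}{\mu(B(z,s))}\Bigr)^{1/2 - 1/q} \;\leq\; C \Bigl(\tfrac{r}{s}\Bigr)^{(Q_U - \beta_L)(1/2 - 1/q)}\Bigl(\tfrac{\Psi(r)}{\mu(B(y,r))}\Bigr)^{1/2-1/q},
\]
so the aggregated power of $s/r$ in the prefactor becomes
\[
\delta' \;:=\; \delta/q \,-\, (Q_U - \beta_L)_{+}\,(1/2 - 1/q),
\]
which is strictly positive precisely when $q - 2 < 2\delta/(Q_U - \beta_L)_{+}$ (no constraint at all if $\beta_L \geq Q_U$). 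Any such $q > 2$ is admissible, and this is the source of the quantitative dependence of $q$ on $\delta$ and the doubling exponents stated in the theorem. With this $q$ and $L_0$ the hypothesis \eqref{eq:LocalUD} of Proposition~\ref{prop:LocalUD} is verified with exponent $\delta'$, and its conclusion yields the desired Sobolev--Poincar\'e inequality with the precise representative $\tilde f$ in place of $f_{qc}$.

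The remaining task is to replace $\tilde f$ by a quasicontinuous representative $f_{qc}$, and this is handled exactly as in the final paragraph of the proof of Proposition~\ref{prop:CSandCE}. For $f \in \mathcal{F} \cap C(X)$ we have $\tilde f = f = f_{qc}$ pointwise and there is nothing to do. For a general $f \in \mathcal{F}$ we use the regularity of $(\mathcal{E},\mathcal{F})$ to choose a sequence $\{f_n\} \subseteq \mathcal{F} \cap C(X)$ converging to $f$ in $(\mathcal{F},\mathcal{E}_1)$, extract a subsequence converging $\Gamma\langle \xi\rangle$-a.e.\ to $f_{qc}$ via \cite[Theorem 2.1.4 and Lemma 3.2.4]{FOT}, and pass to the limit using Fatou's lemma on the left and $\mathcal{E}_1$-continuity on the right. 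The main obstacle throughout is entirely the scale-function bookkeeping that guarantees the positivity of $\delta'$; once this is arranged, the remaining ingredients are direct applications of machinery already developed in the paper.
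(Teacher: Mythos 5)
Your proposal is correct and follows the paper's own route: extract from \ref{eq:CS(Dir)} the pointwise energy bound of \ref{eq:CE(Dir)} type for $\Gamma\Span{\xi}$ exactly as in the first step of Proposition~\ref{prop:CSandCE}, rewrite it as the local hypothesis \eqref{eq:LocalUD} of Proposition~\ref{prop:LocalUD} with $\nu = \Gamma\Span{\xi}$, $p=2$, $x_0 = y$, $R_0 = r$ and $L_0$ chosen to equal the target prefactor, choose $q > 2$ small enough to keep the auxiliary exponent $\delta'$ positive, and pass from the precise representative to $f_{qc}$ by the regularity/Fatou argument at the end of Proposition~\ref{prop:CSandCE}. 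Worth flagging: your exponent bookkeeping gives
\[
q\delta' = \delta + \bigl(1 - \tfrac{q}{2}\bigr)\bigl(Q_U - \beta_L\bigr),
\]
so the admissible range of $q$ is governed by the \emph{upper} exponent $Q_U$ of $\mu$ and the \emph{lower} exponent $\beta_L$ of $\Psi$; the paper writes $(Q_L - \beta_U)$ in the displayed chain and $(Q_L - \beta_L)$ in the concluding sentence, and its quantitative statement likewise names $Q_L$ and $\beta_U$. A direct check — raising the lower bound $\mu(B(z,s))/\Psi(s) \gtrsim (s/r)^{Q_U-\beta_L}\,\mu(B(y,r))/\Psi(r)$ to the negative power $1-q/2$ — confirms that your version is the correct one and that the paper's occurrences are typos (they are mutually inconsistent, and in examples such as $Q_L<\beta_U$ they would assert that every $q>2$ works, which is false). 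Your use of $(Q_U - \beta_L)_+$ is a mildly lossy but harmless simplification in the regime $\beta_L \geq Q_U$.
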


\begin{proof}
    Fix $q > 2$.
    By the same argument as in the proof of Proposition \ref{prop:CSandCE}, it is sufficient to prove the inequality in the claim for continuous $f$. We also have for all $y \in X$ and $0 < r \leq 2R$,
    \[
    \Gamma\Span{\xi}(B(y,r)) = \int_{B(y,r)} \, d\Gamma\Span{\xi} \lesssim \left( \frac{r}{R} \right)^{\delta}\frac{\mu(B(y,r))}{\Psi(r)}.
    \]

    Now, consider $y,z \in X$ and $r,s > 0$ such that $0 < s \leq r$ and $B(y,r) \cap B(z,s) \neq \emptyset$.
    By using the doubling properties of $\mu$ and $\Psi$, we get
    \begin{align*}
        \Gamma\Span{\xi}(B(z,s)) & \lesssim \left(\frac{s}{R}\right)^\delta \frac{\mu(B(z,s))}{\Psi(s)}\\
        & = \left(\frac{s}{R}\right)^\delta \left(\frac{\mu(B(z,s))}{\Psi(s)}\right)^{1 - \frac{q}{2}} \left(\frac{\mu(B(z,s))}{\Psi(s)}\right)^{\frac{q}{2}}\\
        & \lesssim L_0^q \left(\frac{s}{r}\right)^{\delta + \left(1 - \frac{q}{2}\right)(Q_L - \beta_U)}\left(\frac{\mu(B(z,s))}{\Psi(s)}\right)^{\frac{q}{2}},
    \end{align*}
    where $Q_L$ and $\beta_U$ are as in the claim and $L_0$ is given by 
    \[
    L_0 := C\left(\frac{\mu(B(y,r))}{\Psi(r)}\right)^{\frac{1}{q} - \frac{1}{2}} \left(\frac{r}{R}\right)^{\frac{\delta}{q}}. 
    \]
    By choosing $q > 2$ such that $\delta + (1 - q/2)(Q_L - \beta_L) > 0$, which is possible by $\delta > 0$,
    the Sobolev--Poincar\'e inequality in the claim follows from Proposition \ref{prop:LocalUD}.
\end{proof}

\subsection{Concluding remarks}\label{subsec:concludingremarks}
We provide some further discussion on the results of the work.

\begin{remark}\label{rem:p-CS}
    The techniques developed in Sections
    \ref{sec:SPI} and \ref{sec:Dir} are quite general and do not rely on the linearity of Dirichlet forms. Consequently, Theorem \ref{thm:SobolevPoincare} and the proofs in Section \ref{sec:Dir} can be applied to obtain analogous results in non-linear settings.
    Indeed, consider a $p$-energy structure $(\mathcal{F}_p,\Gamma_p,\Psi)$ within a suitable framework of non-linear potential theory in metric spaces; see for instance \cite{KajinoContraction,Sasaya2025}.
    Then, the analogous counterpart of the cutoff energy condition
    \[
        \int_{B(y,r)} \,d\Gamma_p\Span{\xi} \leq C\left( \frac{r}{R} \right)^\delta \frac{\mu(B(y,r))}{\Psi(r)}
    \]
    is equivalent to the \emph{$p$-cutoff Sobolev inequality}
    \begin{equation*}
    \int_{B(y,r)} \abs{f_{qc}}^p \, d\Gamma_p\Span{\xi} \leq C \left(\frac{r}{R}\right)^\delta \left( \int_{B(y,2r)} \, d\Gamma_p\Span{f} + \frac{1}{\Psi(r)} \int_{B(y,2r)} \abs{f}^p \, d\mu \right).
    \end{equation*}
    See also \cite{yang2025energy,yang2025singularity} for recent studies on the simplified variant of the $p$-cutoff Sobolev inequality.
\end{remark}

In \cite{yang2025singularity}, Yang observed that the cutoff Sobolev inequality follows from a suitable Morrey type inequality. In the remark below, we show that Proposition \ref{prop:CSandCE} provides a simpler argument to this end.
However, we also note that, under Morrey's inequality, the sub-Gaussian heat kernel estimates can be reached without the cutoff Sobolev inequality \cite{barlow2005characterization}.

\begin{remark}
    Assume that $(X,d,\mu)$ is $Q$-Ahlfors regular, meaning $\mu(B(x,r)) \approx r^Q$ for all $x \in X$ and $r \in (0,2\diam(X)]$, and that $\Psi(r) = r^\beta$ for $\beta > Q$.
    Assume also that the strongly local regular Dirichlet form $(\mathcal{E},\mathcal{F})$ on $L^2(X,\mu)$ satisfies
    \hyperref[eq:PI(Dir)]{$\textup{PI}(\beta)$}
    and \hyperref[eq:Ucap(Dir)]{$\textup{Cap}_\leq(\beta)$}.
    Let $\varphi \in \mathcal{F}$ be a cutoff function
    for $B(x,r) \subseteq B(x,2r)$ provided by 
    \hyperref[eq:Ucap(Dir)]{$\textup{Cap}_\leq(\beta)$}.
    Now, by performing the very rough estimate
    \[
        \int_{B(y,r)} \, d\Gamma\Span{\xi} \leq \mathcal{E}(\varphi) \lesssim R^{Q-\beta} = \left(\frac{r}{R}\right)^{\beta-Q} r^{Q-\beta} \approx \left(\frac{r}{R}\right)^{\beta-Q} \frac{\mu(B(x,r))}{r^\beta},
    \]
    we obtain the cutoff energy condition \hyperref[eq:CE(Dir)]{$\textup{CE}_\delta(\beta)$} for $\delta := \beta - Q > 0$, we have verified the cutoff energy condition \hyperref[eq:CE(Dir)]{$\textup{CE}_\delta(\beta)$}.
    This observation along with Theorem \ref{thm:main} provides a short argument that the dimension condition $\beta > Q$ along with \hyperref[eq:PI(Dir)]{$\textup{PI}(\beta)$}
    and \hyperref[eq:Ucap(Dir)]{$\textup{Cap}_\leq(\beta)$} implies the heat kernel estimates \hyperref[eq:HKMain]{$\textup{HKE}(\beta)$}.
\end{remark}

We briefly discuss the resistance conjecture of Grigor’yan, Hu, and Lau, and see \cite[Remark 1.2]{murugan2023note} and \cite[Section 6.3]{murugan2024heat} for further discussions.

\begin{conjecture}[Conjecture 4.15 \cite{grigor2014heat}]
    The strongly local regular Dirichlet form $(\mathcal{E},\mathcal{F})$ on $L^2(X,\mu)$ satisfies the heat kernel estimates \ref{eq:HKMain} if and only  $(\mathcal{E},\mathcal{F})$ satisfies both the Poincar\'e inequality \ref{eq:PI(Dir)} and the upper capacity estimate \ref{eq:Ucap(Dir)}.
\end{conjecture}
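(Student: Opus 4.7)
The plan is organized around the equivalence in Theorem \ref{thm:main}. The easy direction, \ref{eq:HKMain} $\Rightarrow$ \ref{eq:PI(Dir)} and \ref{eq:Ucap(Dir)}, is already Lemma \ref{lemma:Ucap}. Hence the whole content of the conjecture is the converse implication, and by Theorem \ref{thm:main} this further reduces to showing that \ref{eq:PI(Dir)} together with \ref{eq:Ucap(Dir)} yield the cutoff energy condition \ref{eq:CE(Dir)} for some $\delta > 0$. Thus, starting from a cutoff function $\varphi$ for $B(x,R) \subseteq B(x,2R)$ with only the global bound $\mathcal{E}(\varphi) \lesssim \mu(B(x,R))/\Psi(R)$ provided by \ref{eq:Ucap(Dir)}, the task is to construct a (possibly different) cutoff function whose energy measure satisfies the scale-invariant polynomial decay $(r/R)^\delta \mu(B(y,r))/\Psi(r)$ on every smaller ball.

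The first natural attempt would be to replace $\varphi$ by the \emph{equilibrium potential} $\xi$, namely the $\mathcal{E}$-minimizer among admissible cutoff functions for $B(x,R) \subseteq B(x,2R)$. Its existence and the global estimate $\mathcal{E}(\xi) \lesssim \mu(B(x,R))/\Psi(R)$ follow from \ref{eq:Ucap(Dir)} by a standard Hilbert-space argument, and $\xi$ is harmonic on $B(x,2R) \setminus \overline{B(x,R)}$ and takes its values in $[0,1]$. I would then try to bootstrap \ref{eq:PI(Dir)} together with the harmonicity of $\xi$ into a localized Caccioppoli-type inequality on balls $B(y,r) \subseteq B(x,2R) \setminus \overline{B(x,R)}$ and combine it with a Widman hole-filling device to produce a power-law decay of $\Gamma\Span{\xi}(B(y,r))$. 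A second line of attack, made available by the present work, is to apply Theorem \ref{thm:SobolevPoincare} with $\nu := \Gamma\Span{\xi}$: this recasts \ref{eq:CE(Dir)} as a Sobolev--Poincar\'e type inequality against $\Gamma\Span{\xi}$, which might be more accessible via the Haj{\l}asz--Koskela machinery developed in Section \ref{sec:SPI}. In either approach, the exponent $\delta > 0$ would presumably arise from iterating a decay-of-oscillation lemma for $\xi$ on dyadic scales.

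The main obstacle, and the reason the resistance conjecture has remained open since 2014, is that essentially every classical tool for producing polynomial energy decay --- Moser iteration, Campanato estimates, the parabolic Harnack inequality used in Lemma \ref{lemma:h} --- ultimately invokes a cutoff Sobolev-type estimate at some stage. Concretely, the Caccioppoli step $\int \eta^2\,d\Gamma\Span{u} \lesssim \int u^2\, d\Gamma\Span{\eta}$ for a harmonic $u$ with cutoff $\eta$ requires precisely a local energy bound on $\Gamma\Span{\eta}$, which is \ref{eq:CE(Dir)} itself, so a naive iteration is circular. Breaking this circle appears to require an intrinsic self-improvement of the global capacity bound \ref{eq:Ucap(Dir)} into its localized form under the additional assumption \ref{eq:PI(Dir)}: perhaps through an extremal property of the equilibrium potential forcing its energy to distribute evenly across scales, or through a capacitary comparison principle that bypasses Hölder regularity altogether. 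I expect this is where genuinely new input will be needed; the contribution of the present paper is to reduce the full heat-kernel question to this intrinsically local energy-regularity question about a single cutoff function.
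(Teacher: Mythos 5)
This statement is a \emph{conjecture}, not a theorem: the paper does not prove it, and explicitly presents it (Conjecture 4.15 of Grigor'yan--Hu--Lau) as an open problem in Subsection \ref{subsec:concludingremarks}. So there is no ``paper's own proof'' to compare against, and your proposal --- which stops short of a proof and instead identifies the obstruction --- is the appropriate response. You have correctly understood the role the conjecture plays in the paper.

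Your reduction is accurate and matches the paper's own framing: by Lemma \ref{lemma:Ucap} the forward direction is known, and by Theorem \ref{thm:main} the conjecture is equivalent to the implication \ref{eq:PI(Dir)} and \ref{eq:Ucap(Dir)} $\Rightarrow$ \ref{eq:CE(Dir)} for some $\delta>0$. The paper's concluding remarks make precisely the observations you gesture at: combining \ref{eq:Ucap(Dir)} with the log-Caccioppoli inequality yields \hyperref[eq:CE(Dir)]{$\textup{CE}_0(\Psi)$}, i.e.\ the cutoff energy condition with $\delta=0$, but Remark \ref{rem:delta=0} and the Dirac-measure example in Remark \ref{rem:dirac} show that $\delta=0$ is genuinely insufficient for the machinery of Section \ref{sec:SPI}; and Corollary \ref{cor:holder} ties the missing $\delta>0$ to a priori H\"older regularity of cutoff functions, which is exactly the self-improvement you identify as lacking. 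The paper also anticipates your ``second line of attack'': its final remark in that subsection suggests that H\"older continuous subharmonic cutoff functions (such as the normalized resolvent $\xi_0$ in the proof of Proposition \ref{prop:BBK}) are natural candidates and that subharmonicity plus the reverse Poincar\'e inequality produce the local energy bound once H\"older regularity is available --- but obtaining that regularity without already assuming \ref{eq:HKMain} is the circular step you describe. One small correction to your first paragraph: the issue is not producing the decay $(r/R)^\delta\mu(B(y,r))/\Psi(r)$ \emph{inside} the annulus via Widman hole-filling --- that part is plausible where $\xi$ is harmonic --- but controlling the energy of the equilibrium potential on balls touching the inner sphere $\partial B(x,R)$, where $\xi$ is not harmonic and only the global bound is a priori available. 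Apart from that, your diagnosis that a Caccioppoli step for harmonic functions already presupposes a local energy bound on the cutoff (hence is circular) is precisely the known difficulty, and your concluding paragraph correctly locates where new input is required.
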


In other words, the resistance conjecture asserts that \ref{eq:CE(Dir)} in the statement of Theorem \ref{thm:main} can be replaced by \ref{eq:Ucap(Dir)}. By the strong locality, it follows easily that \ref{eq:CE(Dir)} implies \ref{eq:Ucap(Dir)}.
However, as the remark below shows, the former is, a priori, a stronger requirement than the latter.

\begin{remark}
    It is a notable fact that the cutoff energy condition with $\delta = 0$, namely \hyperref[eq:CE(Dir)]{$\textup{CE}_0(\Psi)$}, for potentially discontinuous $\xi$ follows from \ref{eq:Ucap(Dir)} and the log-Caccioppoli inequality \cite[Lemma 7.1]{GrigorCapacity15}.
    As explained in Remark \ref{rem:delta=0}, \hyperref[eq:CE(Dir)]{$\textup{CE}_0(\Psi)$} is not sufficiently strong to derive the cutoff Sobolev inequality \hyperref[eq:CS(Dir)]{$\textup{CE}_0(\Psi)$} with our method.
    Also, according to Corollary \ref{cor:holder}, \ref{eq:CE(Dir)} for $\delta > 0$ implies Hölder continuity, while the estimate for $\delta = 0$ does not provide such a priori regularity. For instance, given any $n \geq 2$ and $f \in W^{1,n}(\R^n)$, it follows from Hölder's inequality,
\[
    \int_{B(y,r)} \abs{\nabla f}^2 \,dx \leq C(n) \left( \int_{\R^n} \abs{\nabla f}^n \, dx\right)^{n/2} r^{n-2},
\]
which is \hyperref[eq:CE(Dir)]{$\textup{CE}_0(\Psi)$}. 
It is an elementary result in the Sobolev space theory that 
$W^{1,n}(\R^n)$ contains discontinuous functions. In particular, there are functions that satisfy \hyperref[eq:CE(Dir)]{$\textup{CE}_\delta(\Psi)$} for $\delta = 0$ but not for any $\delta > 0$.
In any case, according to \cite[Theorem 1.2]{GrigorCapacity15}, if one could justify the implication \hyperref[eq:CE(Dir)]{$\textup{CE}_0(\Psi)$} $\Rightarrow$ \hyperref[eq:CS(Dir)]{$\textup{CS}_0(\Psi)$}, this would positively resolve the resistance conjecture.
\end{remark}

\begin{remark}
    The proof of Proposition \ref{prop:BBK} has a simpler proof if we use the fact that \ref{eq:HKMain} implies the \emph{simplified cutoff Sobolev inequality} introduced in \cite{andres2015energy}.
    Indeed, the proof of Proposition \ref{prop:BBK} shows that the cutoff function $\xi$ is a subharmonic in suitably chosen open balls $B(x,2r)$. Combining this fact with 
    the reverse Poincar\'e inequality \cite[Lemma 3.3]{kajino2020singularity} and 
    the Hölder regularity of $\xi$ provided by Lemma \ref{lemma:h}, 
    \[
    \int_{B(x,r)} \, d\Gamma\Span{\xi} \lesssim \frac{1}{\Psi(r)} \int_{B(x,2r)} (f - f_{B(x,2r)})^2 \,d\mu \lesssim \left( \frac{r}{R_0} \right)^\delta \left(\frac{\mu(B(x,r))}{\Psi(r)}\right).
    \] 
    Thus, Hölder continuous subharmonic functions appear to be natural candidates for the functions in the cutoff energy condition. This observation perhaps could be used to further analyze the resistance conjecture.
\end{remark}

\section{Examples of Poincar\'e inequalities}\label{Sec:Applications}

The purpose of the final section of the work is to discuss some simple applications of Theorem \ref{thm:SobolevPoincare}.
We note that some of these are already available in the literature and the primary motivation is to provide some helpful examples for the reader.

Throughout the section we use the same convention and notation as discussed in the beginning of Section \ref{sec:SPI}, and consider a fixed $p$-energy structure $(\mathcal{F}_p,\Gamma_p,\Psi)$.
We denote the lower and upper exponents of $\mu$ and $\Psi$ by $Q_L$ $Q_U$, and $\beta_L$ and $\beta_U$, respectively. The notation $\tilde{f}$ always refers to the precise representative in \eqref{eq:Exact}.
\subsection{Classical theorems}

Recall that the classical Sobolev--Poincar\'e inequality reads as follows.
Let $n \geq 2$ be an integer, $p \in [1,n)$ and $p^* := pn/(p-n)$.
Then for all $x \in \R^n$, $r > 0$ and $f \in W^{1,p}(\R^n)$,
\begin{equation*}
\left(\kint_{B(x,r)} \abs{f - f_{B(x,r)}}^{p^*} \, dx\right)^{\frac{1}{p^*}} \leq C(n,p) r \left(\kint_{B(x,r)} \abs{\nabla f}^p dx \right)^{\frac{1}{p}}.
\end{equation*}
The following proposition recovers the classical Sobolev--Poincar\'e inequality, except for the endpoint $q = p^*$. The endpoint estimate in many settings can be obtained using the truncation method of Maz'ya \cite{Mazya85}; see for instance \cite{bakry1995sobolev,BJORNKala,SobolevMeetsPoincare,KinnunenKorte08}.

\begin{proposition}\label{prop:CSPI}
    Let $p^* \in (p,\infty]$ be given by
    \[
    p^* := 
    \begin{cases}
        pQ_U/(Q_U - \beta_L) & \text{ if } \beta_L < Q_U\\
        \infty & \text{ if } \beta_L \geq Q_U.
    \end{cases}
    \]
    Then the reference measure $\mu$ satisfies the following Sobolev--Poincar\'e inequality for all $q \in [1,p^*)$.
    There are constant $C,\sigma \geq 1$ such that for all $f \in \mathcal{F}_p$,
    \[
        \left( \kint_{B(x,r)} \abs{f - f_{B(x,r)}}^q \, d\mu \right)^{\frac{1}{q}} \leq C \left(\frac{\Psi(x,r)}{\mu(B(x,r))} \int_{B(x,\sigma r)} \, d\Gamma_p\Span{f} \right)^{\frac{1}{p}}.
    \]
\end{proposition}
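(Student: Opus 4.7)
The plan is to apply Theorem \ref{thm:SobolevPoincare} with $\nu := \mu$ and the auxiliary function
\[
    \Theta(x,r) := \mu(B(x,r))^{\frac{1}{q} - \frac{1}{p}}\Psi(x,r)^{\frac{1}{p}}.
\]
This choice is engineered so that condition \ref{T1} becomes the algebraic identity $\mu(B(x,r))^{1/q} = \Theta(x,r)(\mu(B(x,r))/\Psi(x,r))^{1/p}$, hence holds trivially with $K = 1$. The conclusion \ref{T2} then reads exactly as the claimed Sobolev--Poincar\'e inequality after using $\tilde f = f$ $\mu$-almost everywhere and dividing both sides by $\mu(B(x,r))^{1/q}$ to form the integral average on the left-hand side.

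The actual work lies in verifying that $\Theta$ qualifies as a scale function in the sense of Definition \ref{def:BSF}, in particular that it admits a strictly positive lower exponent. I would combine the doubling estimates \eqref{eq:RevDoubling} for $\mu$ with the scale-function bounds \eqref{eq:Psi} for $\Psi$: since $\frac{1}{q} - \frac{1}{p} \leq 0$ in the range $q \in [p, p^*)$, an upper bound on the $\mu$-ratio becomes a lower bound after exponentiation, and I arrive at
\[
    \frac{\Theta(x,R)}{\Theta(y,r)} \gtrsim \left(\frac{R}{r}\right)^{Q_U\left(\frac{1}{q} - \frac{1}{p}\right) + \frac{\beta_L}{p}}
\]
for intersecting balls with $r \leq R$. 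The exponent equals $\frac{Q_U}{q} - \frac{Q_U - \beta_L}{p}$, which is automatically positive when $\beta_L \geq Q_U$ and, in the complementary regime $\beta_L < Q_U$, is strictly positive precisely when $q < p^* = pQ_U/(Q_U - \beta_L)$. This is exactly the critical exponent specified in the statement. A matching finite upper exponent for $\Theta$ is obtained symmetrically from the remaining pair of doubling bounds, enlarged to a positive value if the natural one turns out nonpositive.

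Finally, the range $q \in [1, p)$, which is not directly covered by Theorem \ref{thm:SobolevPoincare} (the latter requires $p \leq q$), is handled by H\"older's inequality, which reduces the claim to the case $q = p$; this in turn is simply the Poincar\'e inequality \ref{eq:PI} built into the $p$-energy structure. The main, and essentially only, obstacle is the exponent arithmetic above, which clarifies why the critical value $p^*$ takes precisely the stated form; everything else amounts to algebraic bookkeeping and an appeal to Theorem \ref{thm:SobolevPoincare}.
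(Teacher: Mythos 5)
Your proof is correct and follows essentially the same route as the paper's: you use the same choice $\Theta(x,r) = \mu(B(x,r))^{1/q-1/p}\Psi(x,r)^{1/p}$, verify the scale-function property via the same exponent arithmetic (yielding the identical threshold $p^*$), and dispatch $q \in [1,p)$ by H\"older's inequality. The only cosmetic difference is that the paper records the exponent as $Q_U + \frac{q}{p}(\beta_L - Q_U)$ for $\Theta^q$ rather than as $\frac{Q_U}{q} - \frac{Q_U-\beta_L}{p}$ for $\Theta$ itself, which is the same quantity after dividing by $q$.
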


\begin{remark}
    We do not need to use the precise representative here because $\tilde{f} = f$ $\mu$-almost everywhere.
\end{remark}

\begin{proof}[Proof of Proposition \ref{prop:CSPI}]
    The case $q = p$ is just the Poincar\'e inequality \ref{eq:PI}, and the cases $q \in [1,p)$ follow from Hölder's inequality,
    \begin{align*}
        \left(\kint_{B(x,r)} \abs{f - f_{B(x,r)}}^q \, d\mu\right)^{\frac{1}{q}} & \leq \left(\frac{1}{\mu(B(x,r))} \int_B \abs{f - f_{B(x,r)}}^p d\mu \right)^{\frac{1}{p}}\\
        & \lesssim \left(\frac{\Psi(x,r)}{\mu(B(x,r))} \int_{B(x,\sigma r)} \, d\Gamma_p\Span{f} \right)^{\frac{1}{p}}.
    \end{align*}
    
    Next, we consider the case $q > p$. Let
    \[
        \Theta(x,r) := \mu(B(x,r))^{\frac{1}{q}-\frac{1}{p}}\Psi(x,r)^{\frac{1}{p}}.
    \]
    Observe that \ref{T2} in Theorem \ref{thm:SobolevPoincare} for $\nu = \mu$ and $\Theta$ given in the previous display is equivalent to the objective of the proof.
    Also note that
    \[
        \mu(B(x,r))^{\frac{1}{q}} = \Theta(B(x,r)) \left( \frac{\Psi(x,r)}{\mu(B(x,r))} \right)^{\frac{1}{p}} .
    \]
    Thus, we are done once we have verified that $\Theta$ satisfies \eqref{eq:Psi} for $q \in (p,p^*)$.
    
    Let $x,y \in X$ and $0 < r \leq R$ so that $B(x,R) \cap B(y,r) \neq \emptyset$.
    By the doubling properties of $\mu$ and $\Psi$, 
    \[
        \left(\frac{R}{r}\right)^{Q_U + \frac{q}{p}(\beta_L - Q_U)} \lesssim \left(\frac{\Theta(x,R)}{\Theta(y,r)}\right)^q \lesssim \left(\frac{R}{r}\right)^{Q_L + \frac{q}{p}(\beta_U - Q_L)}.
    \]
    If $\beta_L < Q_U$, then the exponents in the display are positive if and only if $q \in [1,p^*)$.
    On the other hand, if $\beta_L \geq Q_U$, then the exponents are positive for all $q \geq 1$.
\end{proof}

Recall that the classical Morrey's inequality states that, when $n \geq 2$ is an integer and $p > n$, the Sobolev functions $f \in W^{1,p}(\R^n)$ satisfy the local Hölder regularity
\[
    \abs{f(x) - f(y)} \leq C(n,p)\abs{x - y}^{1 - \frac{n}{p}}\left( \int_{B(x,2\abs{x-y})} \abs{\nabla f}^p \, dx \right)^{\frac{1}{p}}.
\]
Analogous estimates in metric spaces have been obtained in \cite[Theorem 5.1]{SobolevMetPoincare}. See also \cite[Theorem 3.21]{kigami}, \cite[Theorem 1.3]{barlow2005characterization}.

\begin{proposition}\label{prop:Morrey}
    If $\beta_L > Q_U$, then the following Morrey type inequality holds.
    There are constants $C,\sigma \geq 1$ such that for all $z \in X$ and $r_0 > 0$,
    \[
        \sup_{x \in B(z,r_0)} \abs{\tilde{f}(x) - f_{B(z,r_0)}} \leq C \left(\frac{\Psi(x,r_0)}{\mu(B(x,r_0))} \int_{B(x,\sigma r_0)} \, d\Gamma_p\Span{f}\right)^{\frac{1}{p}}.
    \]
\end{proposition}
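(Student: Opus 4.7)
The plan is to derive the Morrey inequality by applying Theorem \ref{thm:SobolevPoincare} with a cleverly chosen Borel measure, namely a Dirac point mass, together with an auxiliary scale function tailored to the desired right-hand side. Fix $x \in X$, and set $\nu := \delta_x$, the Dirac measure at $x$, together with
\[
    \Theta(y, r) := \left( \frac{\Psi(y, r)}{\mu(B(y, r))} \right)^{1/p}, \qquad y \in X, \ r > 0.
\]
With these choices and $q := p$, condition \ref{T1} reduces to $\delta_x(B(y, r))^{1/p} \leq \Theta(y, r) (\mu(B(y, r))/\Psi(y, r))^{1/p} = 1$ whenever $x \in B(y, r)$ (and trivially otherwise), so \ref{T1} holds with constant $K = 1$.

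The one nontrivial verification is that $\Theta$ genuinely is a scale function in the sense of Definition \ref{def:BSF}; this is precisely where the hypothesis $\beta_L > Q_U$ is used. For $y, y' \in X$ and $0 < r' \leq r$ with $B(y', r') \cap B(y, r) \neq \emptyset$, combining the doubling estimate \eqref{eq:Psi} for $\Psi$ with \eqref{eq:RevDoubling} for $\mu$ yields
\[
    \left( \frac{r}{r'} \right)^{(\beta_L - Q_U)/p} \lesssim \frac{\Theta(y, r)}{\Theta(y', r')} \lesssim \left( \frac{r}{r'} \right)^{(\beta_U - Q_L)/p}.
\]
The lower exponent $(\beta_L - Q_U)/p$ is strictly positive by assumption, and the upper one is automatically positive since $\beta_U \geq \beta_L > Q_U \geq Q_L$, so $\Theta$ satisfies \eqref{eq:Psi}. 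Crucially, $\Theta$ does not depend on the chosen point $x$, so the constants produced by Theorem \ref{thm:SobolevPoincare} are uniform in $x$.

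Applying the conclusion \ref{T2} to the pair $(z, r_0)$ and the measure $\delta_x$ for an arbitrary $x \in B(z, r_0)$ gives
\[
    |\tilde{f}(x) - f_{B(z, r_0)}| \leq C \left( \frac{\Psi(z, r_0)}{\mu(B(z, r_0))} \right)^{1/p} \left( \int_{B(z, \sigma r_0)} d\Gamma_p\langle f \rangle \right)^{1/p}.
\]
Since the right-hand side does not depend on $x \in B(z, r_0)$, I take the supremum over such $x$. Finally, since $x \in B(z, r_0)$ implies $z \in B(x, r_0)$, the doubling properties of $\mu$ and $\Psi$ together with $B(z, \sigma r_0) \subseteq B(x, (\sigma + 1) r_0)$ let me swap the base point from $z$ to $x$ on the right, at the cost of enlarging $\sigma$ and absorbing constants. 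This yields the Morrey estimate exactly as stated.

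I do not anticipate a real obstacle in carrying this out; Theorem \ref{thm:SobolevPoincare} does all the heavy lifting, and the only substantive input is the positivity of $(\beta_L - Q_U)/p$. The main thing to track carefully is that all constants remain independent of the auxiliary point $x$, which they do because the scale function $\Theta$ is fixed and the resulting application of Theorem \ref{thm:SobolevPoincare} is uniform in $\nu$.
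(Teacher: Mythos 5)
Your proof is correct and matches the paper's argument essentially step by step: same choice of Dirac measure $\nu = \delta_x$, same auxiliary scale function $\Theta = (\Psi/\mu)^{1/p}$ with $q = p$, same use of $\beta_L > Q_U$ to get a positive lower exponent for $\Theta$, and the same application of Theorem \ref{thm:SobolevPoincare} followed by a supremum over $x \in B(z,r_0)$. The only cosmetic difference is that you explicitly note the base-point swap $z \leftrightarrow x$ via doubling, which the paper absorbs implicitly.
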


\begin{proof}
    Fix $z \in X$ and $r_0 > 0$.
    The key idea is to apply Theorem \ref{thm:SobolevPoincare} for the Dirac delta measures. We define
    \[
    \Theta(x,r) := \left(\frac{\Psi(x,r)}{\mu(B(x,r))}\right)^{\frac{1}{p}} .
    \]
    It follows from the doubling properties of $\mu$ and $\Psi$ that for all $x,y \in X$ and $0 < r \leq R$,
    \[
    \left(\frac{R}{r}\right)^{\beta_L - Q_U} \lesssim \frac{\Theta(x,R)}{\Theta(y,r)}
    \lesssim \left(\frac{R}{r}\right)^{\beta_U - Q_L}.
    \]
    Hence, $\Theta$ satisfies the doubling condition in \eqref{eq:Psi} according to $\beta_L > Q_U$.

    Now, fix an arbitrary point $x \in B(z,r_0)$ and consider the Dirac delta measure $\delta_x$ concentrated at $x$.
    Since we have the obvious inequality
    \[
        \delta_x(B(y,r)) \leq \Theta(y,r)^p \frac{\mu(B(y,r))}{\Psi(y,r)},
    \]
    we have by Theorem \ref{thm:SobolevPoincare},
    \[
        \abs{\tilde{f}(x) - f_{B(z,r_0)}}^p = \int_{B(z,r_0)} \abs{\tilde{f} - f_{B(z,r_0)}}^p \, d\delta_x \lesssim \frac{ \Psi(x,r_0) }{\mu(B(z,r_0))}\int_{B(z,\sigma r_0)} \, d\Gamma_p\Span{f}.
    \]
    Because the point $x \in B(z,r_0)$ was arbitrary, we may replace the left-hand side of the previous inequality by the supremum over $x \in B(z,r_0)$.
\end{proof}

\subsection{Balance condition}\label{subsec:Balance}
Chanillo and Wheeden introduced the balance condition of Muckenhoupt weights \cite{chanillowheeden85} and studied its relation to Sobolev--Poincar\'e type inequalities.
The analogous notion in the present language is the following.
We say that a doubling measure $\nu$ satisfies the \emph{$(q,p)$-balance condition} if for all $x,y \in X$ and $0 < r \leq R$ such that $B(x,R) \cap B(y,r) \neq \emptyset$,
\begin{equation*}
        \left(\frac{\Psi(y,r)}{\Psi(x,R)}\right)^{\frac{1}{p}} \left(\frac{ \nu(B(y,r)) }{ \nu(B(x,R)) }  \right)^{\frac{1}{q}} \leq K\left( \frac{\mu(B(y,r))}{\mu(B(x,R))} \right)^{\frac{1}{p}}.
\end{equation*}
We rewrite the balance condition to resemble the condition \ref{T1} by setting
\[
    \Theta(x,r) := \nu(B(x,r))^{\frac{1}{q}} \left(\frac{\Psi(x,r)}{\mu(B(x,r))}\right)^{\frac{1}{p}},
\]
or equivalently
\[
    \nu(B(x,r))^{\frac{1}{q}} \leq \Theta(x,r)\left( \frac{\mu(B(x,r))}{\Psi(x,r)} \right)^{\frac{1}{p}}.
\]
Hence, \ref{T1} in Theorem \ref{thm:SobolevPoincare}.
Now, the balance condition is equivalent to stating that $\Theta(y,r) \leq K \Theta(x,R)$ whenever $x,y \in X$ and $0 < r \leq R$ such that $B(x,R) \cap B(y,r) \neq \emptyset$. This is not quite strong enough for us to apply Theorem \ref{thm:SobolevPoincare}. 
We note that Chanillo--Wheeden avoids this issue.
Nevertheless, we observe that the $(q,p)$-balance condition and the doubling property of $\nu$ implies the following variant of the balance condition for all $t \in [1,q)$,
\[
    \left(\frac{\Psi(y,r)}{\Psi(x,R)}\right)^{\frac{1}{p}} \left(\frac{ \nu(B(y,r)) }{ \nu(B(x,R)) }  \right)^{\frac{1}{t}} \leq K' \left(\frac{r}{R} \right)^{\delta} \left( \frac{\mu(B(y,r))}{\mu(B(x,R))} \right)^{\frac{1}{p}}.
\]
A study by Kinnunen, Korte, Lehrbäck and Vähäkangas \cite{kinnunenvahakangas2019} shows that the above variant of the balance condition, which they termed the \emph{bumbed balance condition}, is related to a Keith–Zhong type self-improvement \cite{keithzhong} in the two-measure settings of analysis on metric spaces.
A quite similar condition was studied in the Dirichlet form setting in Barlow--Murugan \cite[Definition 4.1]{barlow2018stability} and later in Barlow--Chen--Murugan \cite[Definition 6.2]{barlow2020stability}.

\begin{proposition}\label{prop:balance}
    Let $q > p$ and assume that $\nu$ is a doubling measure on $X$ that satisfies the $(q,p)$-balance condition. Then $\nu$ satisfies the following Sobolev--Poincar\'e inequality for all $t \in [1,q)$. There are constants $C,\sigma \geq 1$ so that for all open balls $x \in X$, $r > 0$ and $f \in \mathcal{F}_p$,
    \[
        \left( \kint_{B(x,r)} \abs{\tilde{f} - f_{B(x,r)}}^t \, d\nu\right)^{\frac{1}{t}} \leq C \left( \frac{\Psi(x,r)}{\mu(B(x,r))}\int_{B(x,\sigma r)} \, d\Gamma_p\Span{f}  \right)^{\frac{1}{p}}.
    \]
\end{proposition}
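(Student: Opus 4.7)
The plan is to reduce Proposition \ref{prop:balance} to Proposition \ref{prop:LocalUD} by first upgrading the $(q,p)$-balance condition to the bumped variant alluded to in the discussion preceding the statement. Namely, I would show that for every $t \in [1,q)$ there exists $\delta > 0$ and a constant $K' \geq 1$ such that
\[
\left(\frac{\Psi(y,r)}{\Psi(x,R)}\right)^{1/p}\left(\frac{\nu(B(y,r))}{\nu(B(x,R))}\right)^{1/t} \leq K'\left(\frac{r}{R}\right)^{\delta}\left(\frac{\mu(B(y,r))}{\mu(B(x,R))}\right)^{1/p}
\]
whenever $x,y \in X$ and $0 < r \leq R$ with $B(y,r) \cap B(x,R) \neq \emptyset$. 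To establish this I would split
\[
\left(\frac{\nu(B(y,r))}{\nu(B(x,R))}\right)^{1/t} = \left(\frac{\nu(B(y,r))}{\nu(B(x,R))}\right)^{1/q}\left(\frac{\nu(B(y,r))}{\nu(B(x,R))}\right)^{1/t - 1/q},
\]
bound the first factor using the $(q,p)$-balance condition, and control the second via the reverse doubling estimate \eqref{eq:RevDoubling} for $\nu$ (available since $\nu$ is doubling and $X$ is uniformly perfect): there is $\alpha_L > 0$ with $\nu(B(y,r))/\nu(B(x,R)) \lesssim (r/R)^{\alpha_L}$. Taking $\delta := \alpha_L(1/t - 1/q) > 0$ yields the bumped balance inequality; this is the only place where the strict inequality $t < q$ is used.

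Next, to derive the Sobolev–Poincaré inequality, I would fix $x_0 \in X$ and $r_0 > 0$ and set
\[
L_0 := K' \, \nu(B(x_0,r_0))^{1/t}\left(\frac{\Psi(x_0,r_0)}{\mu(B(x_0,r_0))}\right)^{1/p}.
\]
Applying the bumped balance inequality with $(x,R) = (x_0,r_0)$ and rearranging gives
\[
\nu(B(y,r))^{1/t} \leq L_0 \left(\frac{r}{r_0}\right)^{\delta}\left(\frac{\mu(B(y,r))}{\Psi(y,r)}\right)^{1/p}
\]
for every $y \in X$ and $0 < r \leq r_0$ with $B(y,r) \cap B(x_0,r_0) \neq \emptyset$. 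This is precisely the hypothesis of Proposition \ref{prop:LocalUD} with the exponent $q$ there replaced by $t$ (note that $t \geq 1 \geq p$ is automatic only when $p = 1$; for $p > 1$ the case $t < p$ follows from the case $t = p$ via Hölder's inequality, so it suffices to prove the inequality for $t \in [\max\{1,p\},q)$, which is covered by Proposition \ref{prop:LocalUD}). The conclusion of that proposition, after dividing through by $\nu(B(x_0,r_0))^{1/t}$, is exactly the averaged inequality in the statement.

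I do not expect a serious obstacle; the argument is mechanical once the bumped balance condition is in hand. The one bookkeeping point is that both $\delta$ and the constant $L_0$ depend on $t$ (through $1/t - 1/q$ and via $K'$), which is consistent with the dependence of $C,\sigma$ on $t$ in the conclusion. Finally, the small-$t$ case $t \in [1,p)$, if not absorbed through the Hölder reduction above, can be handled directly by noting that the balance condition for $(q,p)$ trivially self-improves to $(t,p)$ for $t \leq p$ by Jensen's inequality and the doubling of $\nu$.
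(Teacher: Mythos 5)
Your argument is correct and matches the paper's proof in all essentials. You split $\nu(B(y,r))^{1/t}$ into the $1/q$-power (controlled by the balance condition) and the $(1/t-1/q)$-power (controlled by the lower exponent $\alpha_L$ of the doubling measure $\nu$ via \eqref{eq:RevDoubling}), obtaining the bumped balance inequality with $\delta = \alpha_L(1/t-1/q) > 0$ — exactly the paper's key computation — and you dispose of $t \in [1,p)$ by Hölder reduction, as the paper does. The only cosmetic difference is that you apply Proposition \ref{prop:LocalUD} directly with an explicit $L_0$, whereas the paper packages the bumped inequality as the statement that $\Theta(x,r) := \nu(B(x,r))^{1/t}\left(\Psi(x,r)/\mu(B(x,r))\right)^{1/p}$ is a scale function and then invokes Theorem \ref{thm:SobolevPoincare}; since that theorem's implication \ref{T1} $\Rightarrow$ \ref{T2} is itself proved by reducing to Proposition \ref{prop:LocalUD}, the two routes are the same argument, with the paper's version slightly more modular and yours slightly more direct.
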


\begin{proof}
    As in the proof of Proposition \ref{prop:CSPI}, we only consider $t \in [p,q)$ since the rest would follow from Hölder's inequality.

    Fix $t \in [p,q)$ and let $\alpha_L > 0$ be a lower exponent of $\nu$ in \eqref{eq:RevDoubling}.
    We use the balance condition and the doubling property \eqref{eq:RevDoubling} of $\nu$ to get
    \begin{align*}
        \left(\frac{\Psi(y,r)}{\Psi(x,R)}\right)^{\frac{1}{p}} \left(\frac{ \nu(B(y,r)) }{ \nu(B(x,R)) }  \right)^{\frac{1}{t}} & =  \left(\frac{ \nu(B(y,r)) }{ \nu(B(x,R)) }  \right)^{\frac{1}{t} - \frac{1}{q}}\left(\frac{\Psi(y,r)}{\Psi(x,R)}\right)^{\frac{1}{p}} \left(\frac{ \nu(B(y,r)) }{ \nu(B(x,R))) }  \right)^{\frac{1}{q}}\\
        & \lesssim  \left(\frac{ \nu(B(y,r)) }{ \nu(B(x,R)) }  \right)^{\frac{1}{t} - \frac{1}{q}}
        \left( \frac{\mu(B(y,r))}{\mu(B(x,R))} \right)^{\frac{1}{p}}\\
        & \lesssim \left( \frac{r}{R} \right)^{\alpha_L\left(\frac{1}{t} - \frac{1}{q} \right)}\left( \frac{\mu(B(x,r))}{\mu(B(x,R))} \right)^{\frac{1}{p}}
    \end{align*}
    for all $x,y \in X$ and $0 < r \leq R$.
    Thus, it follows from the previous estimates that
    \[
        \Theta(x,r) := \nu(B(x,r))^{\frac{1}{t}} \left( \frac{\Psi(x,r)}{\mu(B(x,r))} \right)^{\frac{1}{p}}
    \]
    is a scale function with lower exponent $\delta := (1/t - 1/q)\alpha_L > 0$.
    Note that the other inequality in \eqref{eq:Psi} follows by combining the doubling properties of $\nu,\mu$ and $\Psi$ and the fact that $\delta > 0$.
    Theorem \ref{thm:SobolevPoincare} now implies the desired Sobolev--Poincar\'e inequality.
\end{proof}

We also discuss a simple corollary of Proposition \ref{prop:balance}. We consider the analytic condition $\Psi(x,r) = \mu(B(x,r))$, which in the usual $W^{1,p}(\R^n)$ setting is understood as the dimension condition $p = n$. 

\begin{corollary}\label{cor:ConfInvariance}
    Let $q \geq 1$, $\nu$ be a doubling measure on $X$ and assume that $\Psi(x,r) = \mu(B(x,r))$ for all $x \in X$ and $r > 0$.
    Then $\nu$ satisfies the following Sobolev--Poincar\'e inequality. There are constants $C,\sigma \geq 1$ such that for all $x \in X$, $r > 0$ and $f \in \mathcal{F}_p$,
    \[
        \left( \kint_{B(x,r)} \abs{\tilde{f} - f_{B(x,r)}}^q \, d\nu \right)^{\frac{1}{q}}
        \leq C \left( \int_{B(x,\sigma r)} \, d\Gamma_p\Span{f} \right)^{\frac{1}{p}}.
    \]
\end{corollary}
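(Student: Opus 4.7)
The plan is to derive Corollary \ref{cor:ConfInvariance} as a direct application of Proposition \ref{prop:balance}. The crucial observation is that when $\Psi(x,r) = \mu(B(x,r))$, the ratios $\Psi(y,r)/\Psi(x,R)$ and $\mu(B(y,r))/\mu(B(x,R))$ in the $(q',p)$-balance condition cancel, and what remains is a purely measure-theoretic statement about $\nu$ that is automatic from its doubling property.

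Concretely, given $q \geq 1$ as in the statement, I would first pick any $q' > \max(q, p)$ and verify the $(q',p)$-balance condition for $\nu$. Fix $x,y \in X$ and $0 < r \leq R$ with $B(y,r) \cap B(x,R) \neq \emptyset$. Then the triangle inequality yields $B(y,r) \subseteq B(x,3R)$, so the doubling property of $\nu$ gives
\[
    \nu(B(y,r)) \leq \nu(B(x,3R)) \leq C_\nu \nu(B(x,R))
\]
for a constant $C_\nu$ depending only on the doubling constant of $\nu$. Substituting $\Psi(\cdot,\cdot) = \mu(B(\cdot,\cdot))$ into the $(q',p)$-balance inequality collapses it to
\[
    \left(\frac{\nu(B(y,r))}{\nu(B(x,R))}\right)^{\frac{1}{q'}} \leq C_\nu^{\frac{1}{q'}},
\]
which we just verified. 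Hence $\nu$ satisfies the $(q',p)$-balance condition with constant $K = C_\nu^{1/q'}$.

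Next, since $q' > p$, Proposition \ref{prop:balance} applies and yields, for every $t \in [1,q')$, a Sobolev--Poincar\'e inequality with constants independent of $f \in \mathcal{F}_p$. Choosing $t = q$, which is admissible because $q < q'$, and using again that $\Psi(x,r)/\mu(B(x,r)) = 1$, the right-hand side simplifies to
\[
    C\left(\int_{B(x,\sigma r)} \, d\Gamma_p\Span{f}\right)^{\frac{1}{p}},
\]
which is exactly the conclusion of Corollary \ref{cor:ConfInvariance}. There is no real obstacle here; the only minor care needed is to ensure $q'$ can be chosen to simultaneously exceed $p$ (required by Proposition \ref{prop:balance}) and $q$ (so that $t = q$ falls in the admissible range), which is trivially possible.
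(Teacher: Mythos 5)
Your proposal is correct and follows the same route as the paper: verify the balance condition (which collapses under $\Psi(x,r)=\mu(B(x,r))$ to a tautology guaranteed by the doubling of $\nu$) and then invoke Proposition \ref{prop:balance}. Your only addition is to spell out explicitly the exponent renaming -- choosing an auxiliary $q' > \max(p,q)$ so that the target exponent $q$ falls in the admissible range $t \in [1,q')$ -- which the paper leaves implicit but which is exactly the right way to read its two-line proof.
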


\begin{proof}
    By the condition $\mu(B(x,r)) = \Psi(x,r)$, the measure $\nu$ satisfies the $(q,p)$-balance condition for all $q \geq 1$. The claim now follows from Proposition \ref{prop:balance}.
\end{proof}

As an application of Corollary \ref{cor:ConfInvariance}, we obtain a more general version of a certain pair of two measure Poincar\'e inequalities in Murugan--Shimizu \cite[Proposition 9.21]{murugan2023first}.
The condition $\Psi(x,r) = \mu(B(x,r))$ in their framework holds exactly when $p$ is equal to the Ahlfors regular conformal dimension of the Sierpi\'nski carpet \cite[Assumption 9.16]{murugan2023first}.

\bibliographystyle{acm}
\bibliography{PI}

\end{document}